\documentclass[11pt,reqno]{amsart}
\usepackage{amsmath,graphicx,color}

\usepackage{ifthen}

\usepackage{amssymb,latexsym}
\usepackage{subfigure}
\usepackage{enumitem}
\usepackage{dsfont}
\usepackage{mathtools}
\mathtoolsset{showonlyrefs}

\usepackage{hyperref}
\hypersetup{
urlcolor=black, 
  menucolor=black, 
  citecolor=black, 
  anchorcolor=black, 
  filecolor=black, 
  linkcolor=black, 
  colorlinks=true,
}

\usepackage[numbers,sort&compress]{natbib} 

  
\numberwithin{equation}{section}

%
\allowdisplaybreaks

\newcommand{\bfT}{{\mathbf T}}

\newcommand{\bfR}{{\mathbf R}}

\newcommand{\bfD}{\mathbf B} 

\newcommand{\bfM}{{\mathbf M}}

\newcommand{\bfI}{{\mathbf I}}
\newcommand{\bfX}{{\mathbf X}}
\newcommand{\bfY}{{\mathbf Y}}
\newcommand{\bfx}{{\mathbf x}}

\newcommand{\la}{\lambda}

\newcommand{\ds}{distribution}
\newcommand{\beao}{\begin{eqnarray*}}
\newcommand{\eeao}{\end{eqnarray*}}
\newcommand{\beam}{\begin{eqnarray}}
\newcommand{\eeam}{\end{eqnarray}}
\newcommand{\barr}{\begin{array}}
\newcommand{\earr}{\end{array}}

\definecolor{darkblue}{rgb}{.1, 0.1,.8}
\definecolor{darkgreen}{rgb}{0,0.8,0.2}
\definecolor{darkred}{rgb}{.8, .1,.1}
\textwidth 6.50in
\topmargin -0.50in
\oddsidemargin 0in
\evensidemargin 0in
\textheight 9.00in

\newcommand{\bco}{\begin{corrolary}}
\newcommand{\eco}{\end{corrolary}}

\newcommand{\E}{\mathbb{E}}
\renewcommand{\P}{\mathbb{P}}
\newcommand{\1}{\mathds{1}}
\newcommand{\R}{\mathbb{R}}
\newcommand{\N}{\mathbb{N}}
\newcommand{\C}{\mathbb{C}}
\newcommand{\bfC}{{\mathbf C}}
\newcommand{\bfS}{{\mathbf S}}
\newcommand{\bfB}{{\mathbf B}}

\newcommand{\Frechet}{Fr\'{e}chet }

\DeclareMathOperator{\e}{e}

\newcommand{\inv}{^{-1}}
\newcommand{\x}{{\mathbf x}}

\newcommand{\X}{{\mathbf X}}
\newcommand{\Y}{{\mathbf Y}}

\newcommand{\dint}{\,\mathrm{d}}

\newcommand{\norm}[1]{\|#1\|}
\newcommand{\twonorm}[1]{\|#1\|}

\newcommand{\vep}{\varepsilon}
\newcommand{\nto}{n \to \infty}

\newcommand{\rhs}{right-hand side}

\newcommand{\tr}{\operatorname{tr}}

\newcommand{\diag}{\operatorname{diag}}

\newcommand{\MP}{Mar\v cenko--Pastur }

\renewcommand{\Im}{\operatorname{Im}}

\newcommand{\T}{\mathbf{T}}
\newcommand{\K}{\mathbf{K}}
\newcommand{\D}{\mathbf{D}}
\newcommand{\lb}{\left(}
\newcommand{\rb}{\right)}
\newcommand{\bfy}{\mathbf{y}} 
\newcommand{\invv}{^{-1}}
\newcommand{\rd}{\mathbf{y}}
\newcommand{\conp}{\stackrel{\mathbb{P}}{\to}}

\newcommand{\sq}{^{\frac{1}{2}}}
\newcommand{\Mj}{\mathbf{M}^{(j)}}
\newcommand{\PR}{\mathbb{P}}
\newcommand{\ty}{\tilde{\mathbf{y}}}


\newtheorem{lemma}{Lemma}[section]

\newtheorem{theorem}[lemma]{Theorem}
\newtheorem{proposition}[lemma]{Proposition}

\newtheorem{corollary}[lemma]{Corollary}
\newtheorem{example}[lemma]{Example}

\newtheorem{remark}[lemma]{Remark}

\newcommand{\cip}{\stackrel{\P}{\longrightarrow}}

\newcommand{\cas}{\stackrel{\rm a.s.}{\rightarrow}}

\newcommand{\eid}{\stackrel{\mathcal{D}}{=}}

\newcommand{\cov}{\operatorname{Cov}}

\newcommand{\Holder}{H\"older}

\begin{document}
\bibliographystyle{acm}
\title[LSD for large sample correlation matrices]
{Limiting spectral distribution for large sample correlation matrices}
\thanks{
This work was partially supported by the German Research Foundation (DFG Research Unit 1735, DE 502/26-2, RTG 2131, \textit{High-dimensional Phenomena in Probability - Fluctuations and Discontinuity} and project number 460867398, DFG Research unit 5381, {\it Mathematical Statistics in the Information Age}).
JH thanks Peter Kevei for important contributions to the proof of Theorem~\ref{cor_i.i.d.}.}
\author[N. Dörnemann]{Nina Dörnemann}
\author[J. Heiny]{Johannes Heiny}
\address{Fakult\"at f\"ur Mathematik,
Ruhr-Universit\"at Bochum,
Universit\"atsstrasse 150,
D-44801 Bochum,
Germany}
\email{nina.doernemann@rub.de, johannes.heiny@rub.de}
\begin{abstract}
In this paper, we consider the empirical spectral distribution of the sample correlation matrix 
and investigate its asymptotic behavior under mild assumptions on the data's distribution, when dimension and sample size increase at the same rate.  
First, we give a characterization for the limiting spectral distribution to follow a \MP~law assuming that the underlying data matrix consists of i.i.d. entries.  
Subsequently, we provide the limiting spectral distribution of the sample correlation matrix when allowing for a dependence structure within the columns of the data matrix. 
In contrast to previous works, the fourth moment of the data may be infinite, resulting in a fundamental structural difference. 
More precisely, the standard argument of approximating the sample correlation matrix by its sample covariance companion breaks down and novel techniques for tackling the challenging dependency structure of the sample correlation matrix are introduced. 
\end{abstract}
\keywords{Sample correlation matrix, limiting spectral distribution, \MP~law, dependent data.}
\subjclass{Primary 60B20; Secondary 60F05 60F10 60G10 60G55 60G70}

\maketitle

\section{Introduction}\setcounter{equation}{0}

Due to a wide variety of applications, measuring and estimating the dependence between two random variables are fundamental problems in
statistics. 
Starting with the early works
of  Pearson \cite{Pearson1920}, Kendall \cite{Kendall1938}, Hoeffding \cite{Hoeffding1948} and Blum \cite{blum1961}, several measures of dependence or association have been introduced and analyzed by numerous authors. An outstanding role is played by Pearson's correlation coefficient, a measure of the linear dependency of two random variables, about which  most students learn early on in their studies. Motivated by its importance for statistical inference and estimation, many works are devoted to its stochastic properties in different frameworks. 
For example, in time series analysis, the notion of correlation plays a vital role in multivariate statistical analysis for  parameter estimation, goodness-of-fit tests, change-point detection, etc.; see for example the classical monographs \cite{brockwell:davis:1991,priestley:1981}.


Consider a $p$-dimensional population $\bfx\in \R^p$ of the form $\bfx= \T^{\frac{1}{2}}\tilde{\bfx}$, where the components of
$\tilde{\bfx}=(\tilde{X}_1,\ldots,\tilde{X}_p)^{\top}$
 are independent random variables that are identically distributed as a centered non-degenerate random variable $\xi$, and $\T^{\frac{1}{2}}$ is the Hermitian square root of a positive semidefinite and non-random matrix $\T\in \R^{p\times p}$.  
For a sample $\mathbf{x}_1,\ldots,\mathbf{x}_n$ from the population, we construct the matrix $\X=\X_n=(\bfx_1,\ldots,\bfx_n)$.
This paper is concerned with the spectral properties of the {\it sample correlation matrix} $\bfR$, which is the empirical version of Pearson correlation for multivariate data. It is given by 
  \begin{equation}\label{eq:defR1}
  \bfR =\bfR_n =\{\diag(\bfS_n)\}^{-1/2}\, \bfS_n\{\diag(\bfS_n)\}^{-1/2}\,,
  \end{equation}
	where $\bfS=\bfS_n= n^{-1} \X\X^{\top}$ denotes the {\it sample covariance matrix}.

With the rapid advancements of data collection devices, many modern fields such as biological engineering, telecommunications and finance require the analysis of high-dimensional data sets where the dimension and the sample size are of comparable magnitude. Be aware that traditional multivariate analysis, as outlined in the textbooks of \cite{anderson2003, muirhead1982}, relies on the assumption that the dimension remains fixed and thus is negligible compared to the sample size. For this reason, results from traditional multivariate analysis are typically not applicable in other regimes. Spurred by these problems, new analysis tools for high-dimensional data were developed in recent years. We add to this line of literature and concentrate on the regime where the dimension-to-sample-size ratio $p/n$ tends to a positive constant as $n,p\to \infty$.
For a detailed discussion of typical applications where such an assumption is natural, we refer to \cite{johnstone:2001,donoho:2000, Fan2006, Johnstone2006}.

\subsection{Related literature}

When considering such high-dimensional data sets and associated random matrices $\X$, the main focus of interest has been on the asymptotic
properties of the eigenvalues of the sample covariance matrix $\bfS$. These have been well analyzed in random matrix theory since the pioneering work \cite{marchenko:pastur:1967} where it is shown that for independent and identically distributed (i.i.d.)\ components of $\x$ with finite variance and as $p/n\to \gamma\in (0,\infty)$ the empirical spectral distribution of $\bfS$ converges weakly to the
  celebrated \MP law $\sigma_{MP,\gamma}$ with parameter $\gamma$.
If $\gamma \in (0,1]$,  $\sigma_{MP,\gamma}$  has density,
\begin{equation*}
  f_\gamma(x) =
  \frac{\sqrt{( b_\gamma-x)(x-a_\gamma )}}{2\pi \gamma x} \1_{
      [a_\gamma, b_{\gamma}]}(x)\,,\quad  x\in\R, 
\end{equation*}
with $a_\gamma=(1-\sqrt{\gamma})^2 $  and $b_\gamma=(1+\sqrt{\gamma})^2$. If $\gamma>1$, the \MP law has an additional point mass $1-1/\gamma$ at $0$.
For non-i.i.d.\ components of $\x$, that is $\T \neq \bfI_p$ (the $p$-dimensional identity matrix), the limiting spectral distribution (LSD) can be characterized in terms of an integral equation for its Stieltjes transform.  
Subsequently, several  ground-breaking results such as  the
  convergence of the largest eigenvalue $\lambda_1(\bf S)$ and the smallest eigenvalue $\lambda_p(\bfS)$ to the edges of  the \MP law \citep{BaiYin88a,tikhomirov:2015},
  asymptotic  normality of linear spectral statistics of $\bfS$ \citep{BS04},  or its edge
  universality towards the Tracy-Widom law \citep{johnstone:2001,Peche2012,PillaiYin2014} were established. 
	Apart from the convergence of $\lambda_p(\bfS)$ all those results require a finite fourth moment of $\xi$.

In case of infinite fourth moments, the theory for the eigenvalues and eigenvectors of $\bfS$ is quite different from the aforementioned \MP~theory. For example, if the distribution of $\xi$ is regularly varying with index $\alpha\in (0,4)$, the properly normalized largest eigenvalue of $\bfS$ converges to a \Frechet distribution with parameter $\alpha/2$. A detailed account on the developments in the heavy-tailed case can be found in \cite{davis:heiny:mikosch:xie:2016,heiny:mikosch:2017:iid,basrak:heiny:jung:2020,auffinger:arous:peche:2009}.
	
	For the sample correlation matrix $\bfR=\{\diag(\bfS)\}^{-1/2} \bfS \{\diag(\bfS)\}^{-1/2}$, the situation gets more complicated because of the specific nonlinear dependence structure caused by the normalization $\{\diag(\bfS)\}^{-1/2}$,  which makes the analysis of this random matrix quite challenging. As a consequence, the study of the high-dimensional sample correlation matrix is more recent and somewhat limited. 
In case $\T=\bfI$ and $\xi$ has zero mean, variance equal to one and finite fourth moment, Jiang \cite{jiang:2004} (see also \cite{elkaroui:2009, heiny:mikosch:2017:corr}) showed that the \MP~law $\sigma_{MP,\gamma}$ is still valid for the sample correlation matrix $\bfR$. Moreover, the first result for the linear spectral statistics of $\bfR$ was proved in \cite{Gao2017}. Under $\T\neq \bfI_p$ and $\E[\xi^4]<\infty$, spectral properties of $\bfR$ were derived in \cite{elkaroui:2009, heiny:2022}.
A central step in the proofs of all these results is to approximate the sample correlation $\bfR$ with $\{\diag(\T)\}^{-1/2} \bfS \{\diag(\T)\}^{-1/2}$. Indeed, assuming $p/n\to \gamma\in (0,\infty)$ and uniform boundedness of $\norm{\T}$ (the spectral norm of $\T$), it is known that $\E[\xi^4]<\infty$ implies 
\begin{equation}\label{eq:maindiagappr}
\norm{\diag(\bfS)-\diag(\T)} \cas 0\,, \qquad n \to \infty\,,
\end{equation}
with equivalence in the case $\T=\bfI_p$ (see \cite[Theorem 1.2]{heiny:2022} and \cite[Lemma 2]{bai:yin:1993}). Therefore, under finite fourth moment the normalization $\{\diag(\bfS)\}^{-1/2}$ in \eqref{eq:defR1} can be replaced with $\{\diag(\T)\}^{-1/2}$ and consequently $\norm{\bfR-\{\diag(\T)\}^{-1/2} \bfS \{\diag(\T)\}^{-1/2}}$ converges to zero almost surely as $\nto$. Due to the self-normalization property of the sample correlation matrix, one may without loss of generality assume that $\diag(\T)=\bfI_p$. This means that the first order spectral properties of $\bfR$ and $\bfS$ are the same if $\E[\xi^4]<\infty$. In particular, the smallest and largest eigenvalues of $\bfR$ and $\bfS$ have the same limits and the LSDs of $\bfR$ and $\bfS$ coincide.

To the best of our knowledge, high-dimensional sample correlation matrices under infinite fourth moment $\E[\xi^4]=\infty$ have only been considered in the i.i.d.\ case $\T=\bfI_p$. On the one hand, if the distribution of $\xi$ is in the domain of attraction of the normal distribution, \cite{bai:zhou:2008} proved that the LSD of $\bfR$ is $\sigma_{MP,\gamma}$. On the other hand, if the distribution of $\xi$ is in the domain of attraction of an (infinite variance) $\alpha$-stable distribution with $\alpha\in (0,2)$, then the LSD is the $\alpha$-heavy \MP~law \cite{heiny:yao:2020}.

\subsection*{Our contributions} 
The contributions of this paper are twofold. 
\begin{itemize}
\item In the i.i.d.\ case $\T=\bfI_p$, we provide a characterization for the limiting spectral distribution of $\bfR$ to follow the \MP~law $\sigma_{MP,\gamma}$. More precisely, we show that the latter is equivalent to the convergence of certain quadratic forms, which are analyzed in detail.
\item For a larger class of population correlation matrices $\T$ and  assuming $\E[|\xi|^{2+\delta}]<\infty$ for some $\delta>0$, we prove that the empirical spectral distributions of $\bfR$ converge weakly almost surely to a generalized \MP~law.  In contrast to previous works, the $4$th moment of the data may be infinite, resulting in a fundamental structural difference. 
More precisely, the standard argument of approximating the sample correlation matrix $\bfR$ by its sample covariance companion $\bfS$ breaks down and novel techniques for tackling the challenging dependency structure of $\bfR$ are introduced.
\end{itemize}

\subsubsection*{Structure of this paper}
This work is organized as follows. In the remainder of this section, we introduce some necessary notation and our model. In Section \ref{sec_i.i.d.}, we present our results when assuming that the underlying data matrix consists of i.i.d.\ entries, while Section \ref{sec_dependent} is devoted to the dependent case. Section \ref{sec_proof_i.i.d.} consists of two further Subsections \ref{sec:proofMP} and \ref{sec_aux_i.i.d.}, where the first one contains the main proofs for the results provided in Section \ref{sec_i.i.d.} and in the latter, one can find some auxiliary results. The proof of Theorem~\ref{thm}, which is the main result of Section \ref{sec_dependent},  consists of several steps as outlined in Section \ref{sec_strat_proof} and is therefore deferred to Section \ref{sec_proof_dep}. Finally, the Appendix provides some rather technical details that are needed for the proof of Theorem~\ref{thm}. 


\subsubsection*{Notation}
For any matrix $\bfC$, the spectral (or operator) norm $\twonorm{\bfC}$ is the square root of the largest eigenvalue of $\bfC\bfC^\star$, where $\bfC^\star$ is the complex conjugate of $\bfC$. Moreover, if $\bfC$ is a square matrix, $\diag(\bfC)$ denotes the diagonal matrix which has the same diagonal as $\bfC$. $\bfI_p$ is the $p$-dimensional identity matrix and if the dimension is clear from the context, we will sometimes just write $\bfI$. 

For any $p\times p$ matrix $\bfC$ with real eigenvalues, we denote its ordered eigenvalues by 
$\la_1(\bfC) \ge \cdots \ge \la_p(\bfC)$.
Hence, we have $\twonorm{\bfC}=\sqrt{\la_1(\bfC \bfC^\star)}$. 
Writing $\1$ for the indicator function, the {\em empirical spectral distribution} of $\bfC$ is defined by 
\begin{equation*}
F^{\bfC}(x)= \frac{1}{p}\; \sum_{i=1}^p \1{\{ \lambda_i(\bfC)\le x \}}, \qquad x\in  \R\,.
\end{equation*}
Let $\mu$ be a finite measure on the real line. Its {\em Stieltjes transform} $s_\mu$ is given by
\begin{align*}
		s_\mu(z) = \int \frac{1}{x-z} \mu ( d x),\qquad  z\in\mathbb{C}^+\,,
	\end{align*}
where $\C^+$ are the complex numbers with positive imaginary part. If $\mu = F^{\mathbf{C}}$ denotes an empirical spectral distribution of some matrix $\mathbf{C}$, we abbreviate $s_{F^{\mathbf{C}}} = s_{\mathbf{C}}$.  Moreover, we write $s_n$ for the Stieltjes transform of $F^{\bfR}$.

We also make use of the notation $a \lesssim b$ for real numbers $a,b\in\R$ if there exists some constant $c >0$ independent of $n\in\N$ with the property $a \leq c b$. Note that while the constant $c$ is not allowed to vary with $n\in\N$, it may depend on $z\in\mathbb{C}^+$.

\subsection{The model}\label{sec:model}

Consider a $p$-dimensional population $\mathbf{\tilde{x}}=(\tilde{X}_1,\ldots,\tilde{X}_p)^{\top}\in\R^p$,
  where the coordinates  $\tilde{X}_i$  are independent
  random variables and  identically distributed as a centered  random variable $\xi$ satisfying  $\E [| \xi |^{1+\delta}] < \infty$ for some $\delta >0$ and $\E[\xi^2]=1$ whenever $\E[\xi^2]<\infty$. 
  For a sample $\tilde{\mathbf{x}}_1,\ldots,\tilde{\mathbf{x}}_n$ from the population we construct the 
 matrix $\tilde{\mathbf{X}}=\tilde{\mathbf{X}}_n=(\tilde{\mathbf{x}}_1,\ldots,\tilde{\mathbf{x}}_n)=(\tilde{X}_{ij})_{1\le    i\le p; 1\le j  \le n}$ and set 
$$\X=\T^{\frac{1}{2}} \tilde{\X}=(X_{ij})_{1\le    i\le p; 1\le j\le n}\,,$$
where the so--called population correlation matrix $\T = \T_n \in\R^{p\times p}$ denotes a symmetric positive semidefinite non-random matrix satisfying $\diag(\T)=\bfI_p$ and $\T^{\frac{1}{2}} = (U_{kl})_{1 \leq k,l \leq p}$ its Hermitian square root. 
The sample covariance matrix $\bfS$
  and the sample correlation matrix $\bfR$ are then given as follows:
  \begin{equation*}
	\begin{split}
  \bfS &= \bfS_n =\frac1n \sum_{i=1}^n \T^{\frac{1}{2}} \tilde{\mathbf{x}}_i \tilde{\mathbf{x}}_i^\top \T^{\frac{1}{2}}=\frac1n \X\X^{\top}\,,
  \\
  \bfR &=\bfR_n =\{\diag(\bfS_n)\}^{-1/2}\, \bfS_n\{\diag(\bfS_n)\}^{-1/2}= \Y \Y^\top\,.
	\end{split}
  \end{equation*}
   Here the self-normalized  matrix $\Y=\Y_n=(Y_{ij})_{1\le    i\le p; 1\le j  \le n}  $ for the correlation matrix has
  entries 
  \begin{align} \label{def_yij}
 Y_{kj}=Y_{kj}^{(n)} 
   = \frac{X_{kj} }{\sqrt{ X_{k1}^2 + \ldots + X_{kn}^2 }}
  = & \frac{\sum\limits_{l=1}^p U_{kl} \tilde{X}_{lj}}{\sqrt{ X_{k1}^2 + \ldots + X_{kn}^2 }}\,.
  \end{align}
By construction, the rows of $\bfY$, which we denote by $\ty_1, \ldots, \ty_p \in\R^{1\times n}$, possess Euclidean norm equal to one. They are independent if and only $\T=\bfI_p$. At first sight, the structure of $\bfS$ and $\bfR$ looks similar as they can both be written as some matrix ($n^{-1/2}\X$ resp. $\bfY$) times its transpose. Note, however, that the columns of $\bfX$ are i.i.d.\ whereas the columns of $\bfY$, which we denote by $\bfy_1, \ldots, \bfy_n \in \R^p$,  are not independent due to the joint normalization term $\{\diag(\bfS_n)\}^{-1/2}$.

Throughout this paper, we assume the asymptotic regime where the sample size $n$ and the dimension $p$ tend to infinity simultaneously, i.e.,
\begin{equation*}
p=p_n \to \infty \quad \text{ and } \quad \frac{p}{n}\to \gamma\in (0,\infty)\,,\quad \text{ as } \nto\,. 
\end{equation*}	
We usually suppress the dependence on $n$ in our notation and write $\bfR,\bfS,\X,\Y, \T$ for the matrices $\bfR_n,\bfS_n,\X_n,\Y_n,\T_n$, respectively.

\section{The i.i.d. case}\setcounter{equation}{0} \label{sec_i.i.d.}

Throughout this section, we consider the model introduced in subsection \ref{sec:model} with $\T$ being the $p\times p$ identity matrix $\mathbf{I}_p$. In our analysis of the LSD of the sample correlation matrices $\bfR=\Y\Y^{\top}$,
an important role will be played by the resolvent 
\begin{equation}\label{eq:D}
\bfD(z)
 := (\bfY^\top\bfY - \ty_1^\top \ty_1 -z \bfI_n)^{-1}\,, \quad z\in\mathbb{C}^+,
\end{equation}
where $\ty_1=(Y_{11},\ldots, Y_{1n})$ is the first row of $\bfY$.
We are ready to state our main result for the sample correlation matrix in the i.i.d.\ case
; compare \cite[Theorem~2.1]{yaskov:2016} for a corresponding statement about the sample covariance matrix.

\begin{theorem}\label{thm:MP}
Assume $p/n\to \gamma$, as $n\to\infty$.
Then the following two statements are equivalent:
\begin{itemize}
\item[(i)] The empirical spectral distributions $F^{\bfR}$ converge weakly almost surely to the \MP~law with parameter $\gamma$.
\item[(ii)] For all $z\in\mathbb{C}^+$ one has
\begin{equation}\label{eq:condii}
W_n(z):=\ty_1 \bfD(z) \ty_1^\top - \frac{1}{n} \tr\big( \bfD(z) \big) \cip 0\,, \qquad \nto\,.
\end{equation}
\end{itemize}
\end{theorem}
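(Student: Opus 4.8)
The plan is to establish the equivalence through the Stieltjes transform $s_n$ of $F^{\bfR}$, using the standard fact that weak almost sure convergence of $F^{\bfR}$ to $\sigma_{MP,\gamma}$ is equivalent to $s_n(z) \to s_{MP,\gamma}(z)$ almost surely for each $z\in\mathbb{C}^+$ (on a countable determining set, extended by Montel/Vitali). So both (i) and (ii) will be phrased as statements about resolvents, and the crux is to show that the self-consistent (fixed-point) equation characterizing $s_{MP,\gamma}$ is satisfied in the limit \emph{precisely when} $W_n(z)\cip 0$. Because $\twonorm{\bfR}$ need not be bounded and the fourth moment may be infinite, I cannot go through the sample covariance matrix; instead I work directly with $\bfR=\bfY\bfY^\top$ and exploit that each row $\ty_k$ of $\bfY$ has Euclidean norm exactly one, which already gives $\tr(\bfR)=p$ and tightness-type control.

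First I would set up the rank-one perturbation machinery. Write $\bfR = \bfY\bfY^\top$ and for the resolvent $(\bfR - z\bfI_p)^{-1}$ use the Sherman--Morrison/Woodbury identity to peel off the first row: with $\bfD(z)=(\bfY^\top\bfY - \ty_1^\top\ty_1 - z\bfI_n)^{-1}$ as defined in \eqref{eq:D}, one has the classical identity relating the $(1,1)$ entry of $(\bfR-z\bfI_p)^{-1}$ to $-\big(z + z\,\ty_1\bfD(z)\ty_1^\top\big)^{-1}$ (the standard Schur-complement computation for sample-covariance-type matrices, valid here because $\bfR$ and $\bfY^\top\bfY$ share nonzero eigenvalues). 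By row-exchangeability in the i.i.d.\ case, the same holds for every row $k$ with $\bfD_k(z)=(\bfY^\top\bfY-\ty_k^\top\ty_k-z\bfI_n)^{-1}$, so
\begin{equation*}
s_n(z) = \frac1p \sum_{k=1}^p \frac{-1}{z\big(1 + \ty_k\bfD_k(z)\ty_k^\top\big)}\,.
\end{equation*}
The second ingredient is that $\frac1n\tr\bfD_k(z)$ is close to $\frac1n\tr\bfD(z)$ uniformly (rank-one perturbation bound, $\lesssim 1/(n\,\Im z)$) and that $\frac1n\tr\bfD(z)$ can be related back to $s_n(z)$: since $\bfY^\top\bfY$ is an $n\times n$ matrix with the same nonzero spectrum as $\bfR$ plus $(n-p)$ zeros, $\frac1n\tr(\bfY^\top\bfY - z\bfI_n)^{-1} = \frac{p}{n}s_n(z) - \frac{n-p}{n}\frac1z$, and another rank-one perturbation removes the $\ty_1^\top\ty_1$ term with $o(1)$ error. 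Thus $\frac1n\tr\bfD(z) = \gamma_n s_n(z) + \frac{\gamma_n - 1}{z} + o(1)$ where $\gamma_n = p/n$.

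Now the equivalence falls out of these identities. Assuming (ii): substitute $\ty_k\bfD_k(z)\ty_k^\top = \frac1n\tr\bfD(z) + o_{\PR}(1) = \gamma_n s_n(z) + (\gamma_n-1)/z + o(1)$ into the averaged formula for $s_n(z)$; the summands become (asymptotically deterministic given $s_n(z)$), so $s_n(z)$ satisfies, up to $o_{\PR}(1)$, the equation $s = -1/\big(z(1 + \gamma s + (\gamma-1)/z)\big)$, which is exactly the \MP\ self-consistent equation; a stability/uniqueness argument (the \MP\ Stieltjes transform is the unique solution in $\mathbb{C}^+$ mapping $\mathbb{C}^+\to\mathbb{C}^+$, together with an a priori bound $|s_n(z)|\le 1/\Im z$ and the concentration of $s_n(z)$ around its mean) upgrades $o_{\PR}(1)$ to genuine convergence and then to almost sure convergence, giving (i). Conversely, assuming (i), one knows $s_n(z)\to s_{MP,\gamma}(z)$ a.s., hence $\frac1n\tr\bfD(z)\to \gamma s_{MP}(z)+(\gamma-1)/z$ and each $\ty_k\bfD_k(z)\ty_k^\top$ must converge to the same limit in the averaged sense; a martingale/exchangeability argument (the $W_n(z)$ for different rows are identically distributed, and $\E[\ty_1\bfD(z)\ty_1^\top]-\frac1n\E\tr\bfD(z)\to 0$ forces the individual fluctuation to vanish in probability) yields (ii).

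The main obstacle I anticipate is the direction (i)$\Rightarrow$(ii): extracting convergence of the \emph{single} quadratic form $W_n(z)$ from convergence of an \emph{average} of $p$ such quantities. One cannot invoke the usual concentration inequalities for quadratic forms $\ty_1\bfD\ty_1^\top$ because the entries $Y_{1j}$ are self-normalized (dependent within the row, and only $(1+\delta)$ moments on the underlying $\xi$), so the standard trace-lemma variance bound of order $\twonorm{\bfD}^2/n$ is unavailable. The resolution will be to use exchangeability of the rows together with the fact that $\ty_1^\top\ty_1$ is a rank-one term, so that $\ty_1\bfD(z)\ty_1^\top$ and $\frac1n\tr\bfD(z)$ have the same limit in probability automatically once the averaged version converges --- essentially a de Finetti / second-moment argument exploiting that $p\to\infty$ and that the per-row fluctuations are asymptotically uncorrelated. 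Making this rigorous, while keeping everything inside $\mathbb{C}^+$ and controlling the interchange of limits, is where the real work lies; everything else is bookkeeping with resolvent identities.
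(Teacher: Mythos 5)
Your setup (Schur complement/rank-one identities, the relation $\tfrac1n\tr\bfD(z)=\gamma_n s_n(z)+(\gamma_n-1)/z+o(1)$, concentration of $s_n$ around its mean, and the substitution into the \MP self-consistent equation with a uniqueness argument) matches the paper's route for (ii)$\Rightarrow$(i) and for reducing everything to a statement about the single quadratic form. The genuine gap is exactly at the point you flag as the "main obstacle," namely (i)$\Rightarrow$(ii), and your proposed resolution does not work as stated. You claim that exchangeability, $p\to\infty$, asymptotically uncorrelated per-row fluctuations, and $\E[\ty_1\bfD(z)\ty_1^\top]-\tfrac1n\E[\tr\bfD(z)]\to 0$ "force the individual fluctuation to vanish in probability." This is false as a general principle: identically distributed fluctuations with vanishing mean difference need not vanish in probability, and a second-moment/de Finetti argument of the type you sketch only controls the \emph{average} $\tfrac1p\sum_k$ of the fluctuations, which is precisely what statement (i) already gives you; it cannot by itself upgrade to convergence of the single $W_n(z)$. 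Indeed, the whole point of Theorem~\ref{thm:MP} is that (i) constrains the quadratic forms only through the nonlinear average $\tfrac1p\sum_k \bigl(1+\ty_k\bfD_k(z)\ty_k^\top\bigr)^{-1}$, and extracting (ii) requires additional structure, not just exchangeability. Also note you cannot even cheaply verify "asymptotically uncorrelated" here, since the entries of $\ty_1$ are self-normalized and only $(1+\delta)$ moments of $\xi$ are assumed.

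The paper closes this gap with two ingredients you are missing. First, a Jensen-gap/stability lemma in the spirit of Yaskov (Lemma~\ref{lemma3.4}): if $Z_n$ is bounded and $\E\bigl[\tfrac{Z_n}{1+Z_n}\bigr]-\E\bigl[\tfrac{\E[Z_n|Y_n]}{1+\E[Z_n|Y_n]}\bigr]\to 0$, then $Z_n-\E[Z_n|Y_n]\cip 0$; applied with $Z_n=\ty_1\bfD(z)\ty_1^\top$ (bounded by \eqref{eq:Wbounded}), this converts the asymptotic identity \eqref{eq:3.3'} — which is what (i) delivers after the resolvent bookkeeping — into concentration of the single quadratic form around its conditional mean. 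Second, one must identify that conditional mean with $\tfrac1n\tr\bfD(z)$ (Lemma~\ref{le:eyy}); since $\E[\ty_1\bfD(z)\ty_1^\top\,|\,\Y_{-1}]=\tr\bigl(\bfD(z)\E[\ty_1^\top\ty_1]\bigr)$ and the off-diagonal contribution is $\E[Y_{11}Y_{12}]\,1_n^\top\bfD(z)1_n$, with $1_n^\top\bfD(z)1_n$ of order $n$, this requires the nontrivial estimate $\E[Y_{11}Y_{12}]=o(n^{-1})$ (Lemma~\ref{lem_mixed_mom_i.i.d.}), proved via the Laplace-transform representation of moments of self-normalized sums — H\"older only gives $O(n^{-1})$, which is not enough. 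Without these two steps (or substitutes for them), your (i)$\Rightarrow$(ii) direction does not go through.
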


Theorem \ref{thm:MP} shows that the LSD of the sample correlation matrix depends on the behavior of the random variables $W_n(z)$, which are a quadratic forms in the self-normalized random vector $\ty_1$ and the matrix $\bfD(z)$. Since $\ty_1,\ldots, \ty_p$ are i.i.d., $\bfD(z)$ and $\ty_1$ are independent.  The next lemma collects some basic properties of the sequence $(W_n(z))$.

\begin{lemma}\label{lem:propw}
Assume $p/n\to \gamma$, as $n\to\infty$, and let $z\in\mathbb{C}^+$. Then the random variables $W_n(z), n\ge 1,$ satisfy $|W_n(z)|\le 2/\Im(z)$ and $\lim_{\nto} \E[W_n(z)] =0$. 
Moreover, for the decomposition 
$W_n(z) = W_{n,1}(z) + W_{n,2}(z)$ with
\begin{equation*}
W_{n,1}(z):= \ty_1 \diag(\bfD(z)) \ty_1^\top - \frac{1}{n} \tr\big( \bfD(z) \big) \quad \text{ and } \quad W_{n,2}(z):=W_n(z)-W_{n,1}(z)\,
\end{equation*}
it holds, as $\nto$,
\begin{equation}\label{eq:EW2}
\E[|W_{n,1}(z)|^2]=  n\E[Y_{11}^4] \bigg( \frac{1}{n} \E\Big[\sum_{i=1}^n |\big(\bfD(z)\big)_{ii}|^2\Big]  -\frac{1}{n^2} \E\Big[\big|\tr \big(\bfD(z) \big) \big|^2 \Big] \bigg) +o(1),
\end{equation}
and $\E [ | W_{n,2} (z) |^2 ] =o(1)$.
\end{lemma}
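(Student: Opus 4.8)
\emph{Proof strategy.}
The plan is to exploit two structural facts that hold because $\T=\bfI_p$: the rows $\ty_1,\dots,\ty_p$ are i.i.d., so $\ty_1$ is independent of
$\bfD(z)=\big(\sum_{k=2}^p\ty_k^\top\ty_k-z\bfI_n\big)^{-1}$, and the matrix $\sum_{k=2}^p\ty_k^\top\ty_k$ is symmetric positive semidefinite, whence $\twonorm{\bfD(z)}\le 1/\Im(z)$; moreover the coordinates $Y_{11},\dots,Y_{1n}$ of $\ty_1$ are exchangeable with $\sum_jY_{1j}^2=1$. The latter yields $\E[Y_{11}^2]=1/n$, $n^{-2}\le\E[Y_{11}^4]\le\E[Y_{11}^2]=1/n$ (Jensen and $|Y_{11}|\le1$), and, writing $\rho_n:=\E[Y_{11}^2Y_{12}^2]$, the variance of the constant $\sum_jY_{1j}^2$ being zero forces $n(\E[Y_{11}^4]-n^{-2})+n(n-1)(\rho_n-n^{-2})=0$, so $0\le\rho_n\le n^{-2}$. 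Combined with $\|\ty_1\|=1$ and $\twonorm{\bfD(z)}\le 1/\Im(z)$, this already gives $|\ty_1\bfD(z)\ty_1^\top|\le1/\Im(z)$ and $|n^{-1}\tr\bfD(z)|\le1/\Im(z)$, hence $|W_n(z)|\le2/\Im(z)$. The only analytic input I would use is the elementary self-normalised estimate $\E[(\sum_{j=1}^nY_{1j})^2]=o(n)$ and $\E[(\sum_{j=1}^nY_{1j})^4]=o(n^2)$: indeed $(\sum_jY_{1j})^2=T_n^2/Q_n$ with $T_n=\sum_j\tilde X_{1j}$, $Q_n=\sum_j\tilde X_{1j}^2$, and the strong law (using $\E[\xi]=0$ and $\E[\xi^2]>0$) gives $T_n/n\to0$ and $Q_n/n\to\E[\xi^2]\in(0,\infty]$ a.s., so $T_n^2/(nQ_n)\to0$ a.s.; since $T_n^2/Q_n\le n$ by Cauchy--Schwarz, bounded convergence gives the two displays, and in particular $|\E[Y_{11}Y_{12}]|=|\E[(\sum_jY_{1j})^2]-1|/(n(n-1))=o(1/n)$ and $\E[|\sum_jY_{1j}|]=o(\sqrt n)$.

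For $\E[W_n(z)]\to0$ I would condition on $\bfD(z)$ and use exchangeability of the $Y_{1i}$:
\[
\E[\ty_1\bfD(z)\ty_1^\top]=\sum_{i,j}\E[Y_{1i}Y_{1j}]\,\E[\bfD(z)_{ij}]
=\tfrac1n\E[\tr\bfD(z)]+\E[Y_{11}Y_{12}]\,\E\big[\mathbf{1}^\top\bfD(z)\mathbf{1}-\tr\bfD(z)\big],
\]
with $\mathbf{1}=(1,\dots,1)^\top\in\R^n$, so $\E[W_n(z)]=\E[Y_{11}Y_{12}]\,\E[\mathbf{1}^\top\bfD(z)\mathbf{1}-\tr\bfD(z)]$. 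Since $|\mathbf{1}^\top\bfD(z)\mathbf{1}|\le n\twonorm{\bfD(z)}\le n/\Im(z)$ and $|\tr\bfD(z)|\le n/\Im(z)$, this is $O(n\,|\E[Y_{11}Y_{12}]|)=o(1)$.

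For \eqref{eq:EW2}, note that by self-normalisation $W_{n,1}(z)=\sum_{i=1}^n(Y_{1i}^2-n^{-1})\bfD(z)_{ii}$ is centred and independent of $(\bfD(z)_{ii})_i$, so $\E[|W_{n,1}(z)|^2]=\sum_{i,j}\Cov(Y_{1i}^2,Y_{1j}^2)\,\E[\bfD(z)_{ii}\overline{\bfD(z)_{jj}}]$. Exchangeability gives $\Var(Y_{1i}^2)=\E[Y_{11}^4]-n^{-2}$, and the identity above forces $\Cov(Y_{1i}^2,Y_{1j}^2)=-\tfrac1{n-1}(\E[Y_{11}^4]-n^{-2})$ for $i\ne j$; substituting and using $\sum_{i\ne j}\E[\bfD_{ii}\overline{\bfD_{jj}}]=\E[|\tr\bfD|^2]-\sum_i\E[|\bfD_{ii}|^2]$ yields the exact identity
\[
\E[|W_{n,1}(z)|^2]=\tfrac{n}{n-1}\big(\E[Y_{11}^4]-n^{-2}\big)\Big(\textstyle\sum_i\E[|\bfD(z)_{ii}|^2]-\tfrac1n\E[|\tr\bfD(z)|^2]\Big).
\]
The bracket is nonnegative (Cauchy--Schwarz) and at most $n/\Im(z)^2$, while $\tfrac{n}{n-1}(\E[Y_{11}^4]-n^{-2})-\E[Y_{11}^4]=\tfrac1{n-1}(\E[Y_{11}^4]-n^{-1})$ has modulus $\le(n(n-1))^{-1}$ because $n^{-2}\le\E[Y_{11}^4]\le n^{-1}$; hence the exact identity equals the right-hand side of \eqref{eq:EW2} up to $O(1/n)=o(1)$, which is \eqref{eq:EW2}.

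The remaining claim $\E[|W_{n,2}(z)|^2]=o(1)$ is where the real work lies. Here $W_{n,2}(z)=\ty_1\offdiag(\bfD(z))\ty_1^\top=\sum_{i\ne j}Y_{1i}Y_{1j}\bfD(z)_{ij}$, so by independence $\E[|W_{n,2}(z)|^2]=\sum_{i\ne j}\sum_{k\ne l}\E[Y_{1i}Y_{1j}Y_{1k}Y_{1l}]\,\E[\bfD(z)_{ij}\overline{\bfD(z)_{kl}}]$, which I would split by the overlap of $\{i,j\}$ with $\{k,l\}$. For $\{i,j\}=\{k,l\}$ the two sub-cases both give $|\bfD_{ij}|^2$ (symmetry of $\bfD(z)$) and the moment $\rho_n$, so that block is $2\rho_n\sum_{i\ne j}\E[|\bfD_{ij}|^2]\le 2\rho_n\,\E[\frobnorm{\bfD(z)}^2]\le 2\rho_n\,n/\Im(z)^2=O(1/n)$. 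The block with exactly one common index carries the constant moment $e_3:=\E[Y_{11}^2Y_{12}Y_{13}]$ and the block with four distinct indices carries $e_4:=\E[Y_{11}Y_{12}Y_{13}Y_{14}]$; expanding $(\sum_jY_{1j})^2$ and $(\sum_jY_{1j})^4$ over index-coincidence patterns and using $\sum_jY_{1j}^2=1$, $\max_j|Y_{1j}|\le1$, $\E[Y_{11}^4]\le n^{-1}$, $|\E[Y_{11}^3Y_{12}]|\le n^{-1}$ and the self-normalised estimates gives $n(n-1)(n-2)\,e_3=\E[(\sum_jY_{1j})^2]-2\,\E[(\sum_jY_{1j})\sum_iY_{1i}^3]+2n\E[Y_{11}^4]-1=o(n)$ and likewise $n(n-1)(n-2)(n-3)\,e_4=\E[(\sum_jY_{1j})^4]+O(n)=o(n^2)$, so $e_3=o(n^{-2})$ and $e_4=o(n^{-2})$. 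On the resolvent side, Cauchy--Schwarz bounds each one-common-index sum by $\sum_i\E[(\sum_j|\bfD_{ij}|)^2]\le n\,\E[\frobnorm{\bfD(z)}^2]\le n^2/\Im(z)^2$, and the four-distinct-index sum differs from $\E[|\sum_{i\ne j}\bfD(z)_{ij}|^2]=\E[|\mathbf{1}^\top\bfD(z)\mathbf{1}-\tr\bfD(z)|^2]\le 4n^2/\Im(z)^2$ by the two previously estimated $\bfD$-blocks and is therefore $O(n^2)$; multiplying the $o(n^{-2})$ moment estimates by these $O(n^2)$ resolvent sums makes both blocks $o(1)$, so $\E[|W_{n,2}(z)|^2]=O(1/n)+o(1)=o(1)$. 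I expect the main obstacle to be precisely this bookkeeping of the fourth mixed moments over all coincidence patterns matched with the correct Cauchy--Schwarz bound for each double sum of resolvent entries — the delicate point being that the only pattern whose moment is merely $O(n^{-2})$ rather than $o(n^{-2})$, namely $\{i,j\}=\{k,l\}$, is exactly the one whose resolvent sum is $O(n)$ rather than $O(n^2)$.
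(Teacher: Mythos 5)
Your proposal is correct, and the core skeleton (bounding $|W_n(z)|$ by $2\twonorm{\bfD(z)}$, computing $\E[|W_{n,1}(z)|^2]$ from $\Var(Y_{1i}^2)$ and $\Cov(Y_{1i}^2,Y_{1j}^2)$ via the constraint $\sum_j Y_{1j}^2=1$, and splitting the off-diagonal second moment by index-coincidence patterns) matches the paper's. Where you genuinely diverge is in the auxiliary moment estimates. The paper's proof rests on its Lemma~\ref{lem_mixed_mom_i.i.d.}, proved through the integral representation \eqref{formula_inv} and the Laplace transform of $\xi^2$, which yields $n^{1+\delta'}\E[Y_{11}Y_{12}]\to 0$; for the four-index moment it invokes the identity $\E[Y_1Y_2]=(n-2)\E[Y_1^2Y_2Y_3]+2\E[Y_1^3Y_2]$ together with the nonnegativity/Cauchy--Schwarz result of Gin\'e--G\"otze--Mason, and pairs these with the cruder resolvent bounds $\sum_{i,j}|d_{ij}|\le n^{3/2}/\Im(z)$, $\sum_{i,j}|d_{ij}|^2\le n/\Im(z)^2$. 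You instead obtain $n\E[Y_{11}Y_{12}]\to 0$ (and $\E[(\sum_jY_{1j})^4]=o(n^2)$) from the elementary observation $(\sum_jY_{1j})^2=T_n^2/Q_n\le n$ combined with the strong law and bounded convergence, then extract $\E[Y_{11}^2Y_{12}Y_{13}]=o(n^{-2})$ and $\E[Y_{11}Y_{12}Y_{13}Y_{14}]=o(n^{-2})$ by exact combinatorial expansions, and match each moment block with the correct resolvent-sum bound (in particular recovering the four-distinct-index sum from $\E[|\mathbf{1}^\top\bfD(z)\mathbf{1}-\tr\bfD(z)|^2]=O(n^2)$ minus the already-controlled blocks). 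Your identities check out (e.g.\ $n(n-1)(n-2)\E[Y_{11}^2Y_{12}Y_{13}]=\E[(\sum_jY_{1j})^2]-1-2\E[(\sum_jY_{1j})\sum_iY_{1i}^3]+2n\E[Y_{11}^4]$, with the middle term $o(\sqrt n)$ since $|\sum_iY_{1i}^3|\le 1$), and your exact formula for $\E[|W_{n,1}(z)|^2]$ differs from \eqref{eq:EW2} only by the prefactor $\tfrac{n}{n-1}(\E[Y_{11}^4]-n^{-2})$ versus $\E[Y_{11}^4]$, which you correctly absorb into $o(1)$. What each approach buys: the paper's Laplace-transform lemma delivers the stronger rate $o(n^{-1-\delta'})$ (recorded for its own sake and connected to the relative-stability discussion), and Gin\'e--G\"otze--Mason supplies sign information that shortens the bookkeeping; your route is more elementary and self-contained for this lemma, needing only $o(n^{-1})$, at the price of the careful pattern-by-pattern matching of moment and resolvent bounds that you carry out.
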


\begin{remark}{\em Some comments about the decomposition $W_n(z) = W_{n,1}(z) + W_{n,2}(z)$ are in place.
\begin{enumerate}
\item Since $Y_{11}^2+\ldots+Y_{1n}^2=1$ by definition of $\bfR$, we have $\E[Y_{11}^2]=1/n$ and as a consequence $\E[W_{n,1}(z)]=0$. From equation \eqref{eq:EWn2} later on we see that $|\E[W_{n,2}(z)]|\lesssim n\E[Y_{11}Y_{12}]$. It is interesting to note that $\E[Y_{11}Y_{12}]\ge 0$ with equality if and only if the distribution of $\xi$ is symmetric, that is $\xi\eid -\xi$ (see \eqref{eq:EY1Y2} for details).
Therefore, $W_n(z)$ is centered if and only if the distribution of $\xi$ is symmetric. \\
\item An application of Markov's inequality and the last part of Lemma~\ref{lem:propw} yield for $\vep>0$,
$$\P(|W_{n,2}(z)|>\vep)\le \vep^{-2} \E [ | W_{n,2} (z) |^2 ]\to 0 \,, \qquad \nto\,.$$
Hence, statement {\it(ii)} in Theorem~\ref{thm:MP} is equivalent to: 
\begin{itemize}
\item[{\it(ii')}] For all $z\in\mathbb{C}^+$ one has $W_{n,1}(z) \cip 0$, as $\nto$.
\end{itemize}
\end{enumerate}
}\end{remark}
Now we provide two equivalent sufficient conditions for the convergence of the empirical spectral distributions of the sample correlation matrix to the \MP law.

\begin{theorem}\label{cor_i.i.d.}
Assume $p/n\to \gamma$, as $n\to\infty$. Then the empirical spectral distributions $F^{\bfR}$ converge weakly almost surely to the \MP~law with parameter $\gamma$ if
\begin{equation}\label{eq:esfe}
\lim_{\nto} n \E[Y_{11}^4] =0\,.
\end{equation}
Furthermore, condition \eqref{eq:esfe} is equivalent to $\xi$ being in the domain of attraction of the normal distribution. 
\end{theorem}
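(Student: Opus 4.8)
}
The strategy has two independent parts: first, that \eqref{eq:esfe} is a sufficient condition for the \MP~convergence; second, that \eqref{eq:esfe} is equivalent to $\xi$ lying in the domain of attraction of the normal law. For the first part, the plan is to verify condition~\textit{(ii)} of Theorem~\ref{thm:MP}, or equivalently the reformulation \textit{(ii')} from the remark, namely $W_{n,1}(z)\cip 0$. By \eqref{eq:EW2} in Lemma~\ref{lem:propw}, $\E[|W_{n,1}(z)|^2]$ equals $n\E[Y_{11}^4]$ times the nonnegative quantity $\frac1n\E[\sum_i |(\bfD(z))_{ii}|^2] - \frac{1}{n^2}\E[|\tr\bfD(z)|^2]$, plus $o(1)$. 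Since $\|\bfD(z)\|\le 1/\Im(z)$ deterministically, each $|(\bfD(z))_{ii}|\le 1/\Im(z)$, so the bracketed factor is bounded by $1/\Im(z)^2$ uniformly in $n$. Hence $\E[|W_{n,1}(z)|^2]\lesssim n\E[Y_{11}^4] + o(1)\to 0$ under \eqref{eq:esfe}, and Chebyshev gives $W_{n,1}(z)\cip 0$. Combined with $\E[|W_{n,2}(z)|^2]=o(1)$ from Lemma~\ref{lem:propw}, we get \eqref{eq:condii}, and Theorem~\ref{thm:MP} yields the almost sure weak convergence of $F^{\bfR}$ to $\sigma_{MP,\gamma}$.

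For the second part, I would translate $n\E[Y_{11}^4]$ back into a statement about the i.i.d.\ sequence $\tilde X_1,\ldots,\tilde X_n$ distributed as $\xi$. Writing $Y_{1j} = \tilde X_j / (\tilde X_1^2+\cdots+\tilde X_n^2)^{1/2}$, we have
\begin{equation*}
n\E[Y_{11}^4] = n\,\E\!\left[\frac{\tilde X_1^4}{(\tilde X_1^2+\cdots+\tilde X_n^2)^2}\right]
= n\,\E\!\left[\frac{\tilde X_1^4}{S_n^2}\right],\qquad S_n:=\sum_{j=1}^n \tilde X_j^2 .
\end{equation*}
By exchangeability, $n\E[\tilde X_1^4/S_n^2] \le \E[\sum_j \tilde X_j^4 / S_n^2] = \E[\,(\sum_j \tilde X_j^4)/(\sum_j \tilde X_j^2)^2\,]$, and conversely this sum is a natural comparison quantity. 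The key classical fact is that $\xi$ belongs to the domain of attraction of the normal distribution if and only if the truncated second moment $U(x):=\E[\xi^2\1_{\{|\xi|\le x\}}]$ is slowly varying, equivalently $x^2\P(|\xi|>x)/U(x)\to 0$; and a further classical equivalence (going back to results on self-normalized sums, e.g.\ the Lévy/Raikov-type criteria) states this holds iff $\max_{j\le n}\tilde X_j^2 / S_n \cip 0$, iff $\E[\sum_j \tilde X_j^4/S_n^2]\to 0$. So I would prove the chain: (a) \eqref{eq:esfe} $\iff$ $\E[\max_{j\le n}\tilde X_j^2/S_n]\to 0$ (using $\tilde X_1^4/S_n^2\le (\max_j\tilde X_j^2/S_n)(\tilde X_1^2/S_n)$ for the upper bound and a lower bound isolating the maximal term); (b) $\E[\max_j \tilde X_j^2/S_n]\to 0 \iff \max_j\tilde X_j^2/S_n\cip 0$ (the ratio is in $[0,1]$, so convergence in probability and in $L^1$ coincide); (c) $\max_j\tilde X_j^2/S_n\cip 0 \iff \xi\in\mathrm{DAN}$, which is the standard characterization. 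Care is needed when $\E[\xi^2]=\infty$ versus $\E[\xi^2]<\infty$: in the finite-variance case $S_n/n\to\E[\xi^2]$ a.s.\ and $\max_j\tilde X_j^2/n\to 0$ a.s.\ automatically, so \eqref{eq:esfe} holds trivially, consistent with DAN always containing finite-variance laws.

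\textbf{Main obstacle.} The sufficiency direction is essentially immediate from Lemma~\ref{lem:propw} plus the deterministic resolvent bound, so the real work is the equivalence \eqref{eq:esfe} $\iff$ $\xi\in\mathrm{DAN}$. The delicate point is controlling the self-normalized ratio $\E[n\tilde X_1^4/S_n^2]$ from both sides uniformly in $n$: the upper bound via $\max_j \tilde X_j^2/S_n$ is clean, but the lower bound must show that if \eqref{eq:esfe} holds then no single summand can carry a non-vanishing fraction of $S_n$, which requires a truncation argument at a level $x_n$ chosen so that $n\P(|\xi|>x_n)$ stays bounded, together with estimating $\E[\xi^2\1_{\{|\xi|>x_n\}}]$ and invoking Karamata-type regular-variation bookkeeping. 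This is the step where Peter Kevei's contribution (acknowledged in the paper) presumably enters, and where I would expect to spend most of the effort; the rest is bookkeeping with standard DAN characterizations.
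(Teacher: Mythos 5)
Your sufficiency argument coincides with the paper's: both verify condition \textit{(ii)} of Theorem~\ref{thm:MP} by combining \eqref{eq:EW2} with the deterministic bound $\max_i |(\bfD(z))_{ii}|\le \|\bfD(z)\|\le 1/\Im(z)$ and the estimate $\E[|W_{n,2}(z)|^2]=o(1)$ from Lemma~\ref{lem:propw}. For the equivalence of \eqref{eq:esfe} with the domain of normal attraction you take a genuinely different, and correct, route. The paper cites Theorem 5.4 of Fuchs--Joffe--Teugels with $X=\xi^2$, which translates \eqref{eq:esfe} directly into relative stability of $\xi^2$ (slow variation of $\int_0^x\P(\xi^2>y)\,dy$), and then passes to slow variation of the truncated second moment via Kevei's result and to DAN via Bingham--Goldie--Teugels, Theorem 8.3.1. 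You instead sandwich the quantity probabilistically: with $Z_j=\tilde X_{1j}^2$, $S_n=\sum_j Z_j$, $M_n=\max_j Z_j$, exchangeability and $M_n^2\le\sum_j Z_j^2\le M_nS_n$ give $\E[(M_n/S_n)^2]\le n\E[Y_{11}^4]\le\E[M_n/S_n]$, and since $M_n/S_n\in[0,1]$ these three quantities vanish together, so \eqref{eq:esfe} is equivalent to $M_n/S_n\cip 0$; the maximum-to-sum criterion is indeed a standard characterization of DAN (O'Brien/Breiman-type relative-stability results, also listed among the self-normalized-sum equivalences in Gin\'e--G\"otze--Mason), so your chain closes. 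Your reduction is more elementary and bypasses both the Laplace-transform machinery and the Kevei step, at the cost of invoking the $M_n/S_n\cip 0\iff\mathrm{DAN}$ equivalence as a black box, whereas the paper's citation chain starts from a theorem whose hypothesis is literally \eqref{eq:esfe}. One correction: the ``main obstacle'' you anticipate (a truncation and Karamata argument for the lower bound) is unnecessary -- the inequality $\sum_j Z_j^2\ge M_n^2$ already yields $n\E[Y_{11}^4]\ge\E[(M_n/S_n)^2]$, and boundedness of the ratio upgrades this to $M_n/S_n\cip 0$; the only nontrivial external input is your step (c), which is exactly the place where the paper, too, leans on cited results.
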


\begin{proof}
Let $z\in\mathbb{C}^+$. Our strategy is to show $W_n(z)\cip 0$, which by Theorem \ref{thm:MP} establishes the convergence of the empirical spectral distributions. By Markov's inequality, the condition $W_n(z)\cip 0$ is implied by $\E[|W_{n,1}(z)|^2]=o(1)$ and $\E[ |W_{n,2}(z)|^2] =o(1)$, where the latter follows from Lemma \ref{lem:propw}. We note that 
\begin{equation}\label{eq:bound32}
\max_{i=1,\ldots,n} \big|\big(\bfD(z)\big)_{ii}\big| \le \norm{\bfD(z)}\le \frac{1}{\Im(z)}\,,
\end{equation}
where \eqref{eq:le1} was used for the last inequality.
A combination of \eqref{eq:EW2} and \eqref{eq:bound32} yields that $\E[|W_{n,1}(z)|^2]= O(n\,\E[Y_{11}^4])+o(1)$ which tends to zero as $\nto$ if \eqref{eq:esfe} holds. 

Next, we turn to the second part of the theorem. 
By \cite[Theorem 5.4]{fuchs:joffe:teugels:2001} (with $X = \xi^2$)
the convergence \eqref{eq:esfe}
is equivalent to the relative stability of $\xi^2$, that is 
\[
 \int_0^x \P (\xi^2 > y ) \mathrm{d} y 
\]
is a slowly varying function. We say that a function $L$ is slowly varying (at infinity) if $L(tx)/L(x)\to 1$, as $x\to \infty$, for all $t>0$. Now we have
\[
 \int_0^x \P ( \xi^2 > y ) \mathrm{d} y = 2 \int_0^{\sqrt{x}} \P(|\xi|> u) u\, \mathrm{d} u
 =: h_2 ( \sqrt{x} ).
\]
If $h_2(\sqrt{x})$ is slowly varying then $h_2(x)$ is slowly varying, which
by Theorem 1.1 in \cite{kevei:2021} is equivalent to the slow variation of
\[
 V_2(x) = \int_{[0,x]} y^2 \mathrm{d} \P(|Y|\le y).
\]
The latter is the characterization of the domain of attraction of the normal law
(Theorem 8.3.1 in \cite{bingham:goldie:teugels:1987}).
\end{proof}

\subsection{The role of $W_n(z)$}
In this subsection, we will investigate the influence of the random variables $W_n(z), n\ge 1$, on the LSD in a more general situation. We have the following result.  
\begin{theorem}\label{thm:2ndchar}
Assume $p/n\to \gamma$, as $n\to\infty$. Then the Stieltjes transform $s_n$ of $F^{\bfR}$ satisfies
\begin{equation*}
-z \E[s_n(z)]= \E\left[ \frac{1}{1+W_n(z) +\gamma \E[s_n(z)]-z^{-1} (1-\gamma)} \right] +o(1)
, \qquad z\in \C^+, \nto\,.
\end{equation*}
\end{theorem}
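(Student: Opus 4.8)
The plan is to derive the equation for $\E[s_n(z)]$ by the standard resolvent/Stieltjes-transform approach, tracking the extra term $W_n(z)$ that the self-normalization forces upon us. Write $\bfR = \bfY\bfY^\top$ and recall $s_n(z) = \frac1p\tr(\bfR - z\bfI_p)^{-1}$. The key algebraic identity is the one linking the resolvent of $\bfR = \bfY\bfY^\top$ (a $p\times p$ matrix) to that of $\bfY^\top\bfY$ (an $n\times n$ matrix), together with a rank-one perturbation argument peeling off the first row $\ty_1$ of $\bfY$. First I would use $\bfY^\top\bfY = \sum_{k=1}^p \ty_k^\top\ty_k$ and the identity $z s_n(z) = -1 + \frac{1}{p}\sum_{k=1}^p \ty_k\big(\bfY^\top\bfY - z\bfI_n\big)^{-1}\ty_k^\top$, which comes from $\bfR(\bfR-z\bfI_p)^{-1} = \bfI_p + z(\bfR-z\bfI_p)^{-1}$ and the fact that $\bfR$ and $\bfY^\top\bfY$ share nonzero eigenvalues up to $|p-n|$ zeros (this is where the $z^{-1}(1-\gamma)$ correction enters, since $F^{\bfR}$ and $F^{\bfY^\top\bfY}$ differ by point mass at $0$ of size $1-\gamma$ when $\gamma<1$, and analogously when $\gamma>1$).

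Next, for each fixed $k$ I would apply the Sherman–Morrison rank-one perturbation formula: writing $\bfD_k(z) = (\bfY^\top\bfY - \ty_k^\top\ty_k - z\bfI_n)^{-1}$ (so $\bfD_1 = \bfD$ as in \eqref{eq:D}), one gets
\[
\ty_k\big(\bfY^\top\bfY - z\bfI_n\big)^{-1}\ty_k^\top = \frac{\ty_k \bfD_k(z)\ty_k^\top}{1 + \ty_k\bfD_k(z)\ty_k^\top}\,.
\]
By exchangeability of the rows $\ty_1,\dots,\ty_p$ (valid since $\T = \bfI_p$), taking expectations reduces the average over $k$ to the $k=1$ term, so
\[
-z\E[s_n(z)] = \E\left[\frac{1}{1 + \ty_1\bfD(z)\ty_1^\top}\right] + \text{(the }z^{-1}(1-\gamma)\text{ correction)}\,.
\]
Now I would substitute $\ty_1\bfD(z)\ty_1^\top = W_n(z) + \frac{1}{n}\tr\bfD(z)$ from the definition \eqref{eq:condii}, and then replace $\frac1n\tr\bfD(z)$ by $\gamma\E[s_n(z)]$ up to $o(1)$: indeed $\frac1n\tr\bfD(z)$ and $\frac1n\tr(\bfY^\top\bfY - z\bfI_n)^{-1}$ differ by $O(1/n)$ by the rank-one interlacing bound, and $\frac1n\tr(\bfY^\top\bfY-z\bfI_n)^{-1} = \frac{p}{n}\cdot\frac1p\tr(\bfR-z\bfI_p)^{-1} + z^{-1}(1-p/n) \to \gamma s_n(z) + z^{-1}(1-\gamma) + o(1)$, and its concentration around the mean follows from the standard martingale/Azuma bound on linear functionals of the resolvent, so it may be replaced by $\gamma\E[s_n(z)] - z^{-1}(\gamma-1)$ up to $o(1)$. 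Combining the $z^{-1}(1-\gamma)$ terms correctly yields exactly $1 + W_n(z) + \gamma\E[s_n(z)] - z^{-1}(1-\gamma)$ in the denominator.

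The main obstacle is the error control in two places: (a) showing that replacing $\frac1n\tr\bfD(z)$ by its deterministic-up-to-$o(1)$ surrogate $\gamma\E[s_n(z)] - z^{-1}(\gamma-1)$ inside the nonlinear function $x \mapsto (1+x)^{-1}$ produces only an $o(1)$ error in expectation, and (b) justifying that the denominator stays bounded away from $0$. For (a) I would exploit that the denominators $1 + \ty_1\bfD(z)\ty_1^\top$ have imaginary part bounded below (since $\Im(\ty_1\bfD(z)\ty_1^\top) = \Im(z)\,\ty_1\bfD(z)\bfD(\bar z)\ty_1^\top > 0$, giving $|1 + \ty_1\bfD\ty_1^\top|^{-1} \le 1/$something, and more usefully $|(1+a)^{-1} - (1+b)^{-1}| \le |a-b|/(\Im(a)\Im(b))$ type bounds after checking positivity of imaginary parts), so that the error is controlled by $\E|\frac1n\tr\bfD(z) - \gamma s_n(z) + z^{-1}(\gamma-1)| + |\E[s_n(z)] $ versus $ s_n(z)|$, both $o(1)$ by concentration. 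For (b), note $\Im\big(W_n(z) + \gamma\E[s_n(z)] - z^{-1}(1-\gamma)\big)$ — one needs $\Im(\gamma\E[s_n(z)]) \ge 0$ (true) and control of $\Im(W_n(z))$; here Lemma~\ref{lem:propw} and the bound $|W_n(z)| \le 2/\Im(z)$ keep things finite, and combined with $\Im(-z^{-1}(1-\gamma)) = \Im(z)(1-\gamma)/|z|^2$ one checks the denominator has modulus bounded below on $\C^+$. Everything else — the rank-one formulas, exchangeability, the $|p-n|$ zero eigenvalues bookkeeping — is routine.
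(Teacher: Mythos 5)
Your route is the paper's route: the exact rank-one/exchangeability identity $-z\,\E[s_n(z)]=\E\big[(1+\ty_1\bfD(z)\ty_1^\top)^{-1}\big]$ (this is precisely Lemma~\ref{lemma3.2}, which the paper gets from Theorem~A.4 of Bai--Silverstein; your Sherman--Morrison derivation on the $n\times n$ side is equivalent), then the substitution $\ty_1\bfD(z)\ty_1^\top=W_n(z)+n^{-1}\tr\bfD(z)$, the spectra/interlacing bookkeeping \eqref{eq:sgdsglop}--\eqref{eq:resa} to trade $n^{-1}\tr\bfD(z)$ for $\gamma s_n(z)-z^{-1}(1-\gamma)$, and finally concentration $s_n(z)-\E[s_n(z)]\to0$ (Lemma~\ref{lem:expectedtransform1}) together with bounded denominators (imaginary-part bounds as in \eqref{eq:le5}) to replace the random quantities inside the nonlinearity up to $o(1)$. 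So conceptually nothing is missing.

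There are, however, two concrete bookkeeping errors that you then rely on. First, the master identity carries no ``$z^{-1}(1-\gamma)$ correction'': since $\ty_k\ty_k^\top=1$, your Sherman--Morrison step gives exactly $-z\,s_n(z)=\frac1p\sum_{k=1}^p(1+\ty_k\bfD_k(z)\ty_k^\top)^{-1}$ with no error term, because $\tr\,\bfR(\bfR-z\bfI_p)^{-1}=\tr\,\bfY^\top\bfY(\bfY^\top\bfY-z\bfI_n)^{-1}$ holds exactly (zero eigenvalues contribute nothing to these traces). Second, the sign in your spectra relation is wrong: from $\det(\bfY^\top\bfY-z\bfI_n)=(-z)^{n-p}\det(\bfR-z\bfI_p)$ one gets $n^{-1}\tr(\bfY^\top\bfY-z\bfI_n)^{-1}=\frac pn s_n(z)-z^{-1}\big(1-\frac pn\big)$, not $+\,z^{-1}\big(1-\frac pn\big)$; hence the correct surrogate for $n^{-1}\tr\bfD(z)$ is $\gamma\E[s_n(z)]-z^{-1}(1-\gamma)$, not $\gamma\E[s_n(z)]-z^{-1}(\gamma-1)$ as you write. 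Taken literally, your formulas produce the denominator $1+W_n(z)+\gamma\E[s_n(z)]+z^{-1}(1-\gamma)$ plus an unexplained additive correction outside the expectation; the remark that ``combining the $z^{-1}(1-\gamma)$ terms correctly'' yields the stated denominator has nothing to combine, since the only legitimate source of that term is the trace relation \eqref{eq:resa}. Once these two points are repaired, the rest of your error-control sketch (boundedness of $W_n$, positivity of imaginary parts, concentration of $s_n$) is exactly the machinery the paper's short proof invokes, and the argument closes.
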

From Theorem~\ref{thm:2ndchar} we immediately get the next corollary.
\begin{corollary}\label{cor:ened}
Assume $p/n\to \gamma$ and $\E[s_n(z)]\to s(z)$, as $n\to\infty$, where $s(z)$ is the Stieltjes transform of some probability measure. Then $s(z)$ satisfies the equation
\begin{equation}\label{eq:charW1}
-z\, s(z)= \lim_{\nto} \E\left[ \frac{1}{1+ W_n(z) +\gamma s(z)-z^{-1} (1-\gamma)} \right]\,
, \qquad z\in \C^+\,.
\end{equation}
and the limit on the \rhs~exists.
\end{corollary}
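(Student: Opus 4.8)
The plan is to substitute the hypothesis $\E[s_n(z)]\to s(z)$ into the identity of Theorem~\ref{thm:2ndchar} and then replace the term $\gamma\E[s_n(z)]$ sitting in the denominator by its limit $\gamma s(z)$. Fix $z\in\mathbb{C}^+$ and abbreviate
\[
A_n:=1+W_n(z)+\gamma\E[s_n(z)]-z^{-1}(1-\gamma),\qquad B_n:=1+W_n(z)+\gamma s(z)-z^{-1}(1-\gamma),
\]
so that Theorem~\ref{thm:2ndchar} reads $-z\E[s_n(z)]=\E[1/A_n]+o(1)$. Since the \lhs~converges to $-zs(z)$, we already get $\E[1/A_n]\to -zs(z)$ as $\nto$. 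It therefore suffices to prove $\E[1/B_n]-\E[1/A_n]\to 0$; this simultaneously shows that $\lim_{\nto}\E[1/B_n]$ exists and equals $-zs(z)$, which is \eqref{eq:charW1}.

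For this last convergence, note that $A_n-B_n=\gamma(\E[s_n(z)]-s(z))=:\varepsilon_n$ is \emph{deterministic} and tends to $0$, so
\[
\frac{1}{B_n}-\frac{1}{A_n}=\frac{\varepsilon_n}{A_nB_n}
\qquad\text{and hence}\qquad
\E\Big[\frac1{B_n}\Big]-\E\Big[\frac1{A_n}\Big]=\varepsilon_n\,\E\Big[\frac1{A_nB_n}\Big].
\]
Everything thus reduces to a uniform control of the reciprocals: it is enough that $\varepsilon_n\,\E\big[|A_nB_n|^{-1}\big]\to 0$, for which $\sup_n\E\big[|A_nB_n|^{-1}\big]<\infty$ (equivalently, via Cauchy--Schwarz, a uniform $L^2$-bound on $1/A_n$ and on $1/B_n$) is amply sufficient.

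The non-degeneracy of $A_n$ and $B_n$ is the crux of the matter, and it is inherited from the resolvent representation underlying the proof of Theorem~\ref{thm:2ndchar}. The Schur complement formula for the $(1,1)$-entry of $(\bfR-z\bfI_p)^{-1}$, together with the push-through identity and $\|\ty_1\|=1$, gives
\[
\big((\bfR-z\bfI_p)^{-1}\big)_{11}=-z^{-1}\big(1+\ty_1\bfD(z)\ty_1^{\top}\big)^{-1}=-z^{-1}C_n^{-1},\qquad C_n:=1+W_n(z)+\tfrac1n\tr\bfD(z),
\]
using $\ty_1\bfD(z)\ty_1^{\top}=W_n(z)+n^{-1}\tr\bfD(z)$ by the definition of $W_n$; consequently $|C_n|^{-1}=|z|\,\big|\big((\bfR-z\bfI_p)^{-1}\big)_{11}\big|\le |z|/\Im(z)$ almost surely, i.e.\ $C_n$ is bounded away from $0$ deterministically.

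It remains to relate $A_n,B_n$ to $C_n$: one has $A_n=C_n-\Delta_n$ and $B_n=C_n-\Delta_n'$ with $\Delta_n:=n^{-1}\tr\bfD(z)-\gamma\E[s_n(z)]+z^{-1}(1-\gamma)$ and $\Delta_n':=n^{-1}\tr\bfD(z)-\gamma s(z)+z^{-1}(1-\gamma)$, and both tend to $0$ in $L^2$: indeed $\bfD(z)$ and $(\bfY^{\top}\bfY-z\bfI_n)^{-1}$ differ by a rank-one perturbation, so $|n^{-1}\tr\bfD(z)-n^{-1}\tr(\bfY^{\top}\bfY-z\bfI_n)^{-1}|\le (n\,\Im(z))^{-1}$; the matrices $\bfY^{\top}\bfY$ and $\bfR=\bfY\bfY^{\top}$ share their non-zero eigenvalues, so $n^{-1}\tr(\bfY^{\top}\bfY-z\bfI_n)^{-1}=(p/n)\,s_n(z)-(1-p/n)z^{-1}$; and $s_n(z)\to s(z)$ in $L^2$ because $\E[s_n(z)]\to s(z)$ by hypothesis while $\Var(s_n(z))=O(1/p)$ by a bounded-difference estimate for $p^{-1}\tr(\bfR-z\bfI_p)^{-1}$, and $p/n\to\gamma$. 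Hence on the event $G_n:=\{\max(|\Delta_n|,|\Delta_n'|)\le\Im(z)/(2|z|)\}$, which has probability tending to $1$, both $|A_n|$ and $|B_n|$ exceed $\Im(z)/(2|z|)$, so $|A_n|^{-2}\1_{G_n}$ and $|B_n|^{-2}\1_{G_n}$ are bounded uniformly in $n$; the contribution of $G_n^{c}$ is absorbed by exactly the difference-of-reciprocals estimate (anchored at the deterministically non-degenerate $C_n$) that is already used in the proof of Theorem~\ref{thm:2ndchar} to make sense of, and estimate, $\E[1/A_n]$. I expect this final point — ruling out that the honest random denominators $A_n,B_n$ pile up near $0$, and showing that the passage $C_n\rightsquigarrow A_n\rightsquigarrow B_n$ is $L^2$-negligible — to be the only real obstacle; once it is settled, the error terms combine and \eqref{eq:charW1} follows.
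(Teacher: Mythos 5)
Your route is the same one the paper takes (the paper offers no separate argument: it treats the corollary as an immediate substitution of $\E[s_n(z)]\to s(z)$ into Theorem~\ref{thm:2ndchar}), and your preparatory facts are all correct: the Schur--complement/push-through identity giving the deterministic bound $|C_n|=|1+\ty_1\bfD(z)\ty_1^{\top}|\ge \Im(z)/|z|$ (the paper obtains the same bound from \eqref{eq:le5}), the rank-one trace perturbation, the spectra identity relating $n^{-1}\tr(\bfY^{\top}\bfY-z\bfI_n)^{-1}$ to $s_n$, and the $L^2$-smallness of $\Delta_n,\Delta_n'$ via Lemma~\ref{lem:expectedtransform1}-type concentration. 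You also correctly isolate the only nontrivial point: keeping the random denominators $A_n,B_n$ away from $0$ in a sense strong enough to survive taking expectations.

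That point, however, is exactly where your argument stops, and the way you dismiss it does not work. Your reduction needs $\varepsilon_n\,\E\big[|A_nB_n|^{-1}\big]\to 0$ (indeed already the finiteness of $\E|1/B_n|$), and nothing you prove delivers this off the good event $G_n$. The anchoring at $C_n$ controls $1/C_n$, and it would also control the denominator $1+W_n(z)+\gamma s_n(z)-z^{-1}(1-\gamma)$ containing the \emph{random} $s_n$, because that one differs from $C_n$ only by deterministic $o(1)$ quantities (the rank-one trace correction of order $(n\Im(z))^{-1}$ and the $(p/n-\gamma)$-terms). But $A_n$ and $B_n$ differ from $C_n$ by $\gamma(s_n(z)-\E[s_n(z)])$ plus deterministic $o(1)$, and this fluctuation is only bounded by the constant $2\gamma/\Im(z)$, not by a vanishing deterministic sequence; so on $G_n^c$ there is no pointwise bound on $1/A_n$ or $1/B_n$ whatsoever. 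Consequently the claim that the bad-event contribution ``is absorbed by exactly the difference-of-reciprocals estimate already used in the proof of Theorem~\ref{thm:2ndchar}'' is unfounded --- no such estimate for $A_n$ or $B_n$ appears there --- and a Cauchy--Schwarz against $\P(G_n^c)\to 0$ would require $\sup_n\E|B_n|^{-2}<\infty$, which is precisely what is missing. To close the proof you need an additional input (an anti-concentration/integrability statement for $A_n,B_n$ near $0$, or an argument that only ever takes expectations of reciprocals whose denominators are deterministically bounded below); granting Theorem~\ref{thm:2ndchar} as a black box gives you $\E[1/A_n]\to -z\,s(z)$, but the passage from $\E[1/A_n]$ to $\E[1/B_n]$ still needs such control, since $A_n-B_n$ being a deterministic $o(1)$ is not by itself sufficient.
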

If we replace $W_n(z)$ with $0$, then \eqref{eq:charW1} is the usual equation for the Stieltjes transform of the \MP~law; see, e.g., \cite{bai:silverstein:2010}. In general, it is not possible to replace $W_n(z)$ with its expectation, unless of course $W_n(z)\cip 0$.
As seen in Theorem \ref{cor_i.i.d.},  the latter is implied by $n\E[Y_{11}^4]\to 0$. In this case, Theorem~\ref{thm:MP} confirms that the LSD of the sample correlation matrices is the \MP~law.

We proceed by investigating $n\E[Y_{11}^4]$ more closely.
For all $y, \beta >0$, we have
		\begin{align}\label{formula_inv}
			\frac{1}{y^\beta} = \frac{1}{\Gamma(\beta)} \int\limits_0^\infty 
			\exp(-ty) t^{\beta -1} \dint t\,, 
		\end{align}
where $\Gamma$ denotes the Gamma function. Combining this representation with Fubini's theorem, we deduce
\begin{equation}\label{eq:formulagine}
\E[ Y_{11}^4 ] =\E\left[ \frac{X_{11}^4}{\big(X_{11}^2+\cdots+X_{1n}^2\big)^2} \right] = \int_0^{\infty} 
 t (\E[\e^{-t \xi^2}])^{n-1}   \E[\xi^{4}\e^{-t \xi^2} ] \, \dint t\,.
\end{equation}
The integrand involves the Laplace transform $t \mapsto \E[\e^{-t \xi^2}]$ of $\xi^2$ and its second derivative; see \cite{fuchs:joffe:teugels:2001} for further details. By definition of $Y_{11}$, the value of $n\,\E[ Y_{11}^4 ]$ lies in the interval $(0,1)$. In this context, the limiting case \eqref{eq:esfe} can be seen as an extreme scenario. It turns out that all limiting values in the above range $(0,1)$ are possible. 
Proposition~1 in \cite{mason:zinn:2005} asserts that the \ds\ of $\xi^2$ is in the 
domain of attraction of an $\alpha/2$-stable distribution with parameter $0< \alpha <2$ if and only if
\begin{equation}\label{eq:limita<2}
\lim_{\nto} n\,\E[Y_{11}^4] =1-\frac{\alpha}{2}\,.
\end{equation}
Examples of such distributions include the Pareto distribution with parameter $\alpha$ and Student's $t$-distribution with $\alpha$ degrees of freedom.
In this case we may obtain a limiting spectral distribution which additionally depends on the value $\alpha$. 

\begin{example}{\em
Let the \ds\ of $\xi^2$ be in the 
domain of attraction of an $\alpha/2$-stable distribution with parameter $0< \alpha <2$ and assume $\xi \eid -\xi$. In this setting, the authors of \cite{heiny:yao:2020} proved that the empirical spectral distributions $F^{\bfR_n}$
  converge weakly in probability to  some probability law $H_{\alpha,\gamma}$, which they termed $\alpha$-heavy MP law with
  parameter $\gamma$.
$H_{\alpha,\gamma}$ is entirely
determined by its moment sequence
$\mu_{k}(\alpha,\gamma)=\int x^k\dint H_{\alpha,\gamma}(x)$, $k\ge 1$.
The exact expression for  $\mu_k(\alpha,\gamma)$
is rather involved (see \cite{heiny:yao:2020}).  
Since $H_{\alpha,\gamma}$ uniquely characterizes its Stieltjes transform $s^{\alpha,\gamma}$,
we obtain by Corollary~\ref{cor:ened} that $s^{\alpha,\gamma}(z)$ satisfies equation \eqref{eq:charW1}. 
}\end{example}

\section{Adding dependency}\setcounter{equation}{0} \label{sec_dependent}
In this section, we study the limiting spectral distribution of the sample correlation matrix $\bfR$ when allowing for a more sophisticated population correlation matrix $\T$.
	 For this purpose, let $\mathcal{I}(i) =\mathcal{I}^{(n)}(i)= \{ 1 \leq k \leq p : U_{ik} = U_{ki} \neq 0\}$ denote the set of indices of non-vanishing entries in the $i$th row or column of the Hermitian square root $\mathbf{U}=\T^{\frac{1}{2}} = (U_{kl})_{1 \leq k,l \leq p}$ ($1 \leq i \leq p$). Among other assumptions stated below, we will impose a sparsity condition on $\mathbf{U}$ in terms of controlling the cardinality of the set $\mathcal{I}(i)$.
	 We propose the following conditions for deriving the limiting distribution of $F^{\bfR}$, which are discussed in Remark~\ref{rem_ass}. 
	\begin{enumerate}[label=(A\arabic*) ]
		\item  $\inf_{n\in \N} \lambda_{\min} (\T_n) > 0 $ 
		. \label{a_eigen_T}
		\item $F^{\T_n} \to H$ almost surely, as $n\to \infty$, where $H$ is a non-random c.d.f..
			\item \label{a_sparse_U} 
		$ \sup\limits_{n\in\N} \max\limits_{1 \leq i \leq p} | \mathcal{I}(i) | <\infty $.
			\item \label{a_2mom} The random variables $\tilde{X}_{ij}$ are i.i.d. according to $\xi$, which satisfies $\E[\xi]=0, \E[\xi^2]=1$ and $\E | \xi |^{2+\delta} < \infty$ for some $\delta>0$. 
		\item \label{a_high_dim} $ p=p_n \to \infty \quad \text{ and } \quad p/n\to \gamma\in (0,\infty)\,,\quad \text{ as } \nto\,. $
	\end{enumerate}

\noindent The following is our main result in the dependent case.
	\begin{theorem} \label{thm}
	Under assumptions \ref{a_eigen_T}-\ref{a_high_dim}, 
	 the empirical spectral distributions $F^{\bfR}$ converge, as $n\to\infty$, weakly almost surely to the generalized \MP~law $F^{\gamma, H}$ with parameter $(\gamma,H)$, whose Stieltjes transform $s=s(z)$ 
		is the unique solution to the equation
		\begin{align} \label{eq_s}
			s(z) = \int \frac{1}{\lambda ( 1 - \gamma - \gamma z s(z) ) - z} dH(\lambda)\,, \qquad z\in\mathbb{C}^+.
		\end{align}
	\end{theorem}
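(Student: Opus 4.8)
The strategy is to follow the now-standard Stieltjes transform / resolvent approach for sample covariance-type matrices (as in \cite{bai:silverstein:2010}, \cite{yaskov:2016}), but to replace the covariance matrix $\bfS$ by the correlation matrix $\bfR = \bfY\bfY^\top$ directly, since under assumption \ref{a_2mom} with $\delta$ possibly less than $2$ the diagonal approximation \eqref{eq:maindiagappr} is unavailable. The first step is to establish that it suffices to identify the limit of $\E[s_n(z)]$ for each fixed $z\in\mathbb{C}^+$: tightness of $(F^{\bfR})$ is automatic since $\tr(\bfR)=p$, so the family is a.s.\ relatively compact, and a concentration argument (McDiarmid/Azuma applied column-by-column to the independent $\tilde{\bfx}_1,\dots,\tilde{\bfx}_n$, exploiting that changing one column perturbs $s_n(z)$ by $O(1/(n\Im z))$ by the rank inequality) upgrades convergence in expectation to weak almost sure convergence. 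Uniqueness of the solution of \eqref{eq_s} in the relevant class of Stieltjes transforms is classical and follows from a contraction/monotonicity argument on $\mathbb{C}^+$ together with \ref{a_eigen_T}, so the whole problem reduces to deriving \eqref{eq_s} as the limiting self-consistent equation.

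The core of the argument is the derivation of the fixed-point equation. I would write, via the resolvent $\bfB(z) = (\bfR - z\bfI_p)^{-1}$ and the identity $z\,\tr(\bfB(z)) + p = \sum_{k=1}^p \tilde{\bfy}_k \bfB(z)\tilde{\bfy}_k^\top$ (using $\bfR = \sum_k \tilde{\bfy}_k^\top\tilde{\bfy}_k$ is \emph{not} quite right — rather $\bfR = \bfY\bfY^\top$ has $(\bfR)_{k\ell} = \tilde{\bfy}_k\tilde{\bfy}_\ell^\top$, so one uses the Sherman--Morrison leave-one-row-out expansion). Concretely, with $\bfR_{(k)}$ the matrix with the $k$th row and column of $\bfY$ deleted and $\bfB_{(k)}$ its resolvent, the standard block-inversion formula gives
\begin{equation*}
(\bfB(z))_{kk} = \frac{1}{\tilde{\bfy}_k\tilde{\bfy}_k^\top - z - \tilde{\bfy}_k \bfY_{(k)}^\top \bfB_{(k)}(z) \bfY_{(k)} \tilde{\bfy}_k^\top}\,,
\end{equation*}
where $\|\tilde{\bfy}_k\|^2 = 1$ by construction. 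The quadratic form in the denominator must be shown to concentrate around $\frac1n \tr\big(\bfY_{(k)}^\top \bfB_{(k)}(z)\bfY_{(k)}\big)$; here the analysis of the i.i.d.\ case enters, because conditionally on the rows outside $\mathcal{I}(k)$, the $k$th row $\tilde{\bfy}_k$ is self-normalized and one needs precisely the type of quadratic-form-versus-trace estimate quantified by $W_n(z)$ and Lemma~\ref{lem:propw}. The sparsity assumption \ref{a_sparse_U} is what makes this tractable: $\tilde{\bfy}_k$ depends only on the $\tilde X_{lj}$ with $l\in\mathcal{I}(k)$, a bounded number of underlying independent sequences, so after conditioning one reduces to a genuinely low-complexity self-normalized quadratic form and the heavy-tail difficulties are confined to the diagonal entries $Y_{kj}^2$, which sum to $1$. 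Averaging over $k$, using \ref{a_eigen_T} to control $\|\bfB_{(k)}\|$-type quantities away from the spectrum, and matching $\frac1n\tr(\bfY_{(k)}^\top\bfB_{(k)}\bfY_{(k)}) \approx \gamma + \gamma z\, s_n(z)$ via the relation between $F^{\bfR}$ and $F^{\bfY^\top\bfY}$, one arrives at
\begin{equation*}
(\bfB(z))_{kk} \approx \frac{1}{- z - z\, t_k(\gamma + \gamma z s_n(z))}
\end{equation*}
for an appropriate ``local variance'' $t_k$ governed by $\T$; summing over $k$ and passing to the limit, the empirical distribution of these $t_k$ converges to $H$ by assumption (A2), giving $s(z) = \int (\lambda(1-\gamma-\gamma z s(z)) - z)^{-1}\,dH(\lambda)$.

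\textbf{Main obstacle.} The delicate point — and the reason the paper advertises ``novel techniques'' — is the concentration of the self-normalized quadratic form $\tilde{\bfy}_k \bfY_{(k)}^\top \bfB_{(k)}(z) \bfY_{(k)} \tilde{\bfy}_k^\top$ around its ``trace'' proxy when only a $(2+\delta)$-th moment is assumed. With finite fourth moment one would simply invoke the classical concentration bound for quadratic forms in i.i.d.\ entries after replacing $Y_{kj}$ by $X_{kj}/\sqrt n$; here that replacement is illegitimate, the normalizing denominator $\sqrt{X_{k1}^2+\cdots+X_{kn}^2}$ is genuinely random and heavy-tailed, and the entries $Y_{kj}$ within a row are dependent. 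Handling this requires (i) a truncation of the $\tilde X_{ij}$ at level $n^{1/2}$ or so, with a careful accounting showing the truncated part contributes negligibly to $s_n(z)$ via rank and norm perturbation bounds, (ii) control of the self-normalizer $\sum_j X_{kj}^2$, which concentrates around its (possibly slowly-varying) scale, and (iii) a martingale-difference decomposition of the quadratic form along the columns $j=1,\dots,n$ combined with the bound $n\E[Y_{k1}^4] \le 1$ to bound the quadratic variation — essentially the content of Lemma~\ref{lem:propw} extended to the $\bfB_{(k)}$ in place of $\bfD(z)$ and localized via \ref{a_sparse_U}. Making all the error terms uniform in $k$ and summable after the $\frac1p\sum_k$ average, while only spending a $(2+\delta)$-th moment, is the technical heart of the proof and is where the appendix estimates will be needed.
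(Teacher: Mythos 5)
Your overall philosophy (Stieltjes transforms, concentration of $s_n-\E s_n$, a quadratic-form-versus-trace estimate, truncation at $\sqrt n$, sparsity localization) matches the paper, but two of your concrete steps do not go through as sketched. First, the concentration step: you claim that changing one column $\tilde\bfx_j$ perturbs $s_n(z)$ by $O(1/(n\Im z))$ ``by the rank inequality.'' That is false here, and it is precisely the subtlety the paper flags: replacing $\tilde\bfx_j$ changes every self-normalizer $\sum_t X_{it}^2$, hence every entry of $\bfY$, so the perturbation of $\bfR$ is not of bounded rank and $(\E_j-\E_{j-1})\D_j^{-1}(z)$ does not vanish. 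The paper repairs this by introducing the leave-one-column normalizer $\bfM^{(j)}$ and the auxiliary resolvents $\tilde\D_j,\hat\D_j$, and controlling the extra error $T_{2,n}$ via moment bounds on $\sum_{i=1}^p Y_{ij}^2$ (Lemma \ref{lem_mixed_second_mom}) plus Burkholder; your sketch has no substitute for this.

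Second, and more seriously, your derivation of the fixed-point equation is based on the leave-one-\emph{row} Schur complement $(\bfB(z))_{kk}=\bigl(1-z-\tilde\bfy_k\bfY_{(k)}^\top\bfB_{(k)}(z)\bfY_{(k)}\tilde\bfy_k^\top\bigr)^{-1}$ together with concentration of the quadratic form around $\tfrac1n\tr\bigl(\bfY_{(k)}^\top\bfB_{(k)}\bfY_{(k)}\bigr)$. If that concentration held, the right-hand side would be asymptotically independent of $k$ (note $\tfrac1n\tr(\bfY_{(k)}^\top\bfB_{(k)}\bfY_{(k)})\approx\gamma+\gamma z s_n(z)$), and you would recover the \emph{standard} \MP equation, not \eqref{eq_s}. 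In reality the concentration fails exactly because $\bfB_{(k)}$ is correlated with $\tilde\bfy_k$ through the rows $i$ with $\mathcal I(i)\cap\mathcal I(k)\neq\emptyset$; those cross terms are what carry the $\T$-dependence, and a ``remove the $O(1)$ correlated rows'' argument controls traces but not the operator norm of the induced change in $\bfY_{(k)}^\top\bfB_{(k)}\bfY_{(k)}$, so it does not rescue the estimate. Your proposed repair via ``local variances'' $t_k$ whose empirical law tends to $H$ cannot be correct either: since $\diag(\T)=\bfI_p$, every row-local variance equals $1$, whereas $H$ is the \emph{eigenvalue} distribution of $\T$; in the covariance setting the row-with-variance-profile picture is only available after rotating to the eigenbasis of $\T$ (legitimate for Gaussian data), and the self-normalization of $\bfR$ destroys that rotation here. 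The paper avoids all of this by decomposing over the i.i.d.\ \emph{observations} (columns): it compares $\D(z)$ with $\K(z)-z\bfI$ where $\K(z)=b(z)\T$, uses $\hat\D_j$ (independent of $\bfy_j$, Lemma \ref{lem_Dhat}) and the key quadratic-form lemma $\E|\bfy_j^\top\bfB^{(j)}\bfy_j-n^{-1}\tr\T\bfB^{(j)}|^2\to0$ (Lemma \ref{lem_quad_form}), which is where $\T$, and hence $H$, enters the limiting equation via \eqref{conv1}--\eqref{conv2}. Your write-up is missing the mechanism by which $H$ appears, so as it stands the argument would not yield \eqref{eq_s}.
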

A strategy of the proof of Theorem \ref{thm} outlining  our novel technical tools is discussed in Section~\ref{sec_strat_proof}.  The complete proof of Theorem \ref{thm} can be found in Section \ref{sec_proof_dep}.
	\begin{remark} \label{rem_ass}{\em 
	\begin{enumerate}
	\item We have the following implications for moments of entries of $\mathbf{Y}$. 
	To begin with, note that $\E [Y_{kj}^2] = \frac{1}{n}$, since 
$Y_{k1}^2+\cdots+Y_{kn}^2 = 1 $
 and $Y_{k1}, \ldots, Y_{kn}$ are identically distributed for each $k\in\{1, \ldots, p\}$.
	Additionally, the first moment satisfies
	\begin{align} \label{first_mom}
		\lim\limits_{n\to\infty}  \max\limits_{1 \leq k \leq p} n | \E  [ Y_{k1}  ] |  = 0
	\end{align}
	and for the fourth moment, we have
	\begin{align} \label{4th_mom}
		\lim\limits_{n\to\infty}  \max\limits_{1 \leq k \leq p} n  \E  [ Y_{k1}^4  ]   = 0
	\end{align}
	(see Proposition \ref{thm_first_mom} and Proposition \ref{thm_4mom} given later).
	For further details about the moments of the self-normalized random variables $Y_{kj}$, we refer the reader to Section \ref{sec_mom_y}.  
	\item Instead of imposing the existence of the $(2+\delta)$th moment of the generic element $\xi$, it is seen from the proof of Theorem \ref{thm} (more precisely, from the proof of Lemma \ref{lem_quad_form}) that \eqref{first_mom} and \eqref{4th_mom} are sufficient conditions. Hence, we could replace assumption \ref{a_2mom} by the following assumption:
	\begin{enumerate}
	\item[\textnormal{(A4')}] 
	The random variables $\tilde{X}_{ij}$ are i.i.d. according to $\xi$, which satisfies $\E[\xi]=0, \E[\xi^2]=1$, and \eqref{first_mom} and \eqref{4th_mom} hold true. 
	\end{enumerate}
	Consequently, the assertion of Theorem \ref{thm} holds true under assumptions \ref{a_eigen_T}-\ref{a_sparse_U}, (A4'), \ref{a_high_dim}. This observation draws a noteworthy connection to the results in the i.i.d. framework presented in Section \ref{sec_i.i.d.}, where we observed that the asymptotic behavior of $n \E [ Y_{11}^4]$ plays a crucial role for the limiting spectral distribution of $\bfR$ to follow a \MP law. More precisely, we recover the first part of Theorem \ref{cor_i.i.d.} (under the additional assumption $\E [ \xi^2 ] =1$) by applying Theorem \ref{thm} with $\T = \mathbf{I}$. 
	\item The sparsity condition imposed on the square root $\mathbf{U}$ in \ref{a_sparse_U} implies an analogue condition for the population correlation matrix $\T$, since it is seen that
	\begin{align*}
		T_{ik} = \sum_{l=1}^p U_{lk} U_{li} \neq 0 \,, \qquad 1 \leq i \neq k \leq p,
	\end{align*}
	implies $ \mathcal{I}(i) \cap \mathcal{I}(k) \neq \emptyset $. As a consequence, we obtain that the spectral norm of $\T$ 
	is bounded uniformly in $n\in\N$, that is, 
		\begin{align*}
		\sup\limits_{n\in\N} \| \T_n  \| < \infty .
	\end{align*}
	\end{enumerate}
	}\end{remark}

	\begin{example}{\em 
	
		  	 \begin{figure}[!ht]
    \centering
             \includegraphics[width=0.49\columnwidth, height=0.35\textheight]{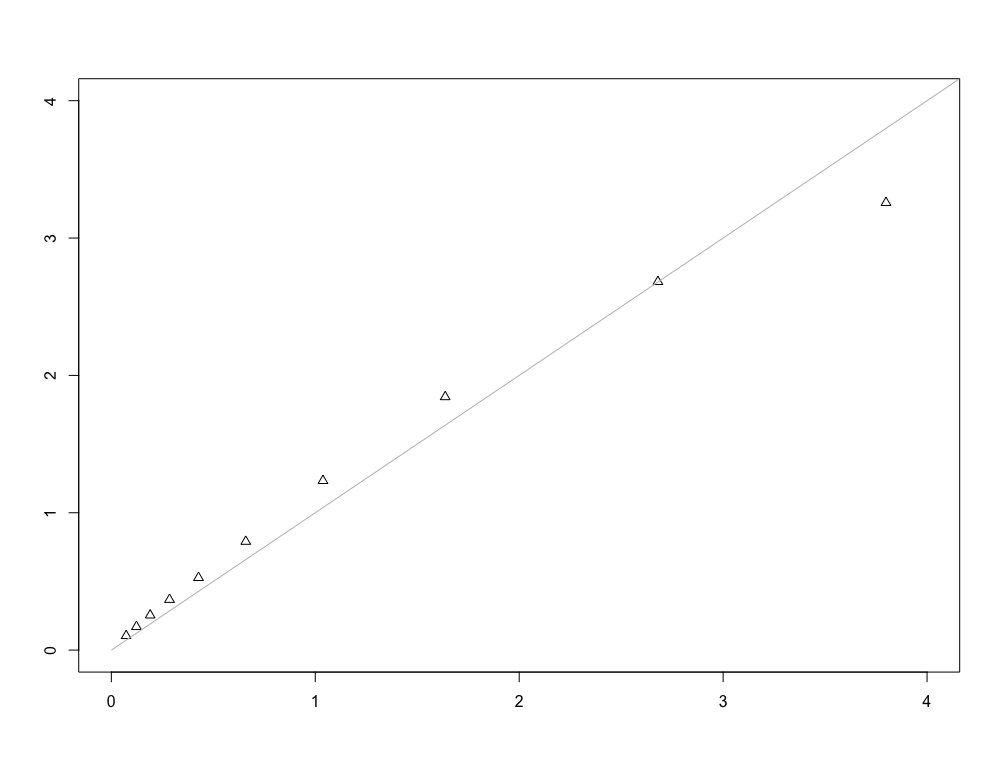}
             \includegraphics[width=0.49\columnwidth, height=0.35\textheight]{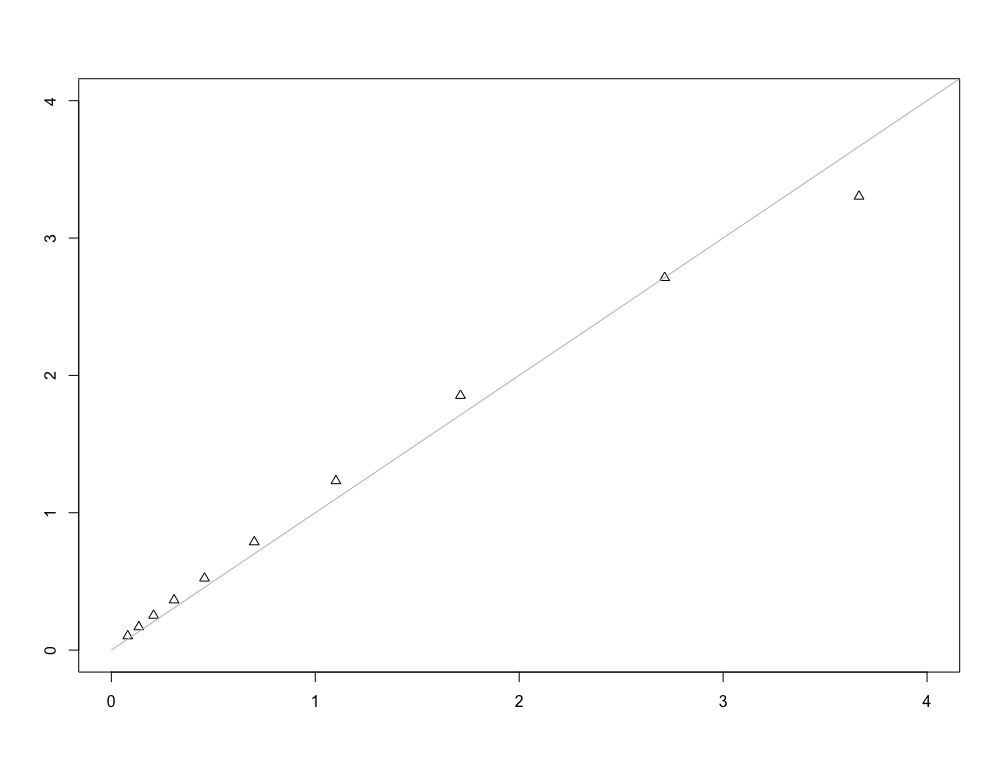}    
              \includegraphics[width=0.49\columnwidth, height=0.35\textheight]{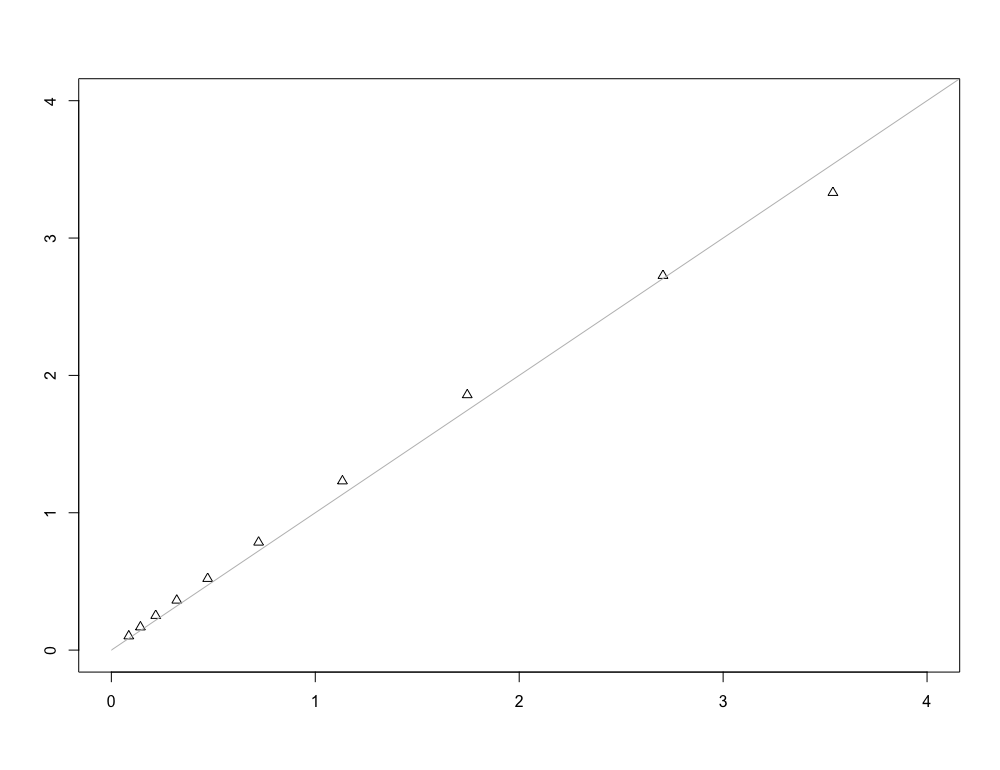} 
             \includegraphics[width=0.49\columnwidth, height=0.35\textheight]{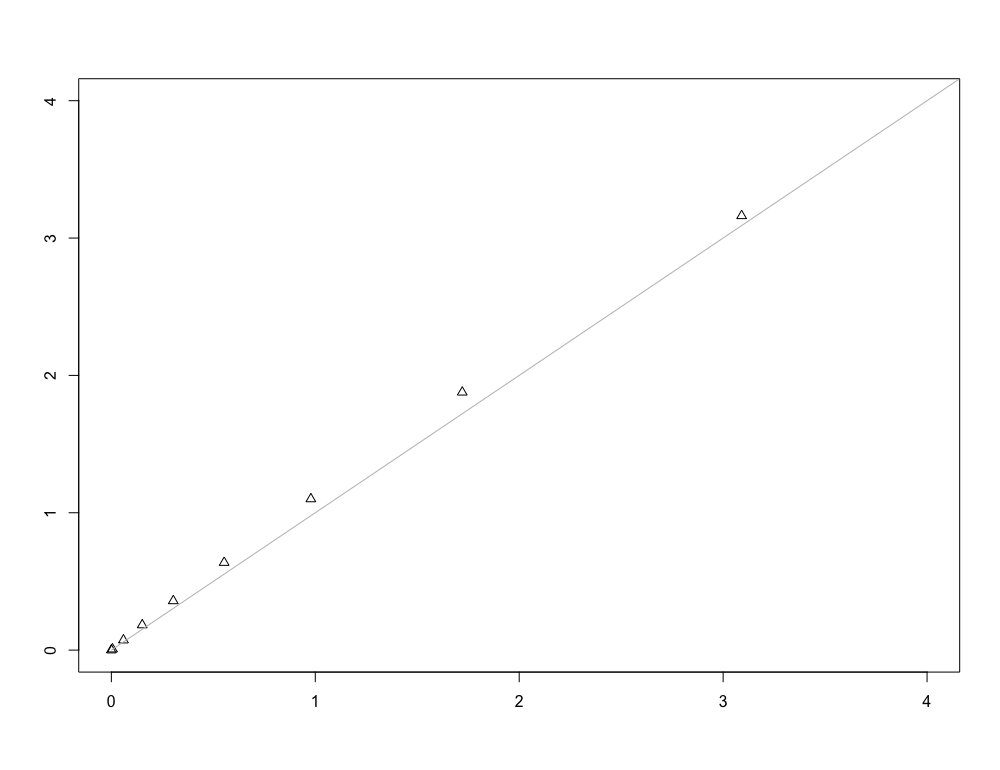}

         \caption{   Simulated $q$-quantiles of $F^{\mathbf{R}_n}$ for normally distributed data ($x$-axis) and standardized $t$-distributed data with degree of freedom 3 ($y$-axis) for $q \in\{0.1, 0.2, \ldots, 0.9, 0.95\}$, $n=200, ~p=100$ (top left), $n=400, ~p=200$ (top right), $n=800, ~p=400$ (bottom left), $n=400, ~p=500$ (bottom right). }
    \label{fig1}
    \end{figure}

	 We continue this section with a small simulation study illustrating the finite-sample behavior of the empirical spectral distribution $F^{\mathbf{R}_n}$ using different distributions for the generic element $\xi$. In Figure \ref{fig1}, we display the empirical quantiles of $F^{\mathbf{R}_n}$ for standard normal distributed data and for standardized $t$-distributed data with $3$ degrees of freedom, that is, the fourth moment of the latter distribution does not exist. By Theorem \ref{thm}, we know that, in both cases, $F^{\mathbf{R}_n}$ admits the same limiting spectral distribution.  Since for general $\T$, the limiting spectral distribution of $\mathbf{R}_n$ given in Theorem \ref{thm} has no closed form, we decided to choose the normal case as a reference.
	In particular, we study the case of two non-vanishing subdiagonals for the population correlation matrix $\mathbf{T}= (T_{ij})_{1\leq i,j \leq p}$, which has entries
	\begin{align*}
		T_{ij} = \begin{cases} 
		1\,, & \text{if } i=j, \\ 
		0.5\,, & \text{if } i = j -1 \textnormal{ or } i - 1 = j, \\
		0.25\,, & \text{if } i = j -2 \textnormal{ or } i - 2 = j,
		\end{cases} 
		~ \qquad 1 \leq i,j \leq p,
	\end{align*}
	for various values of $n$ and $p$ and simulated the $q$-quantiles for each of the two empirical spectral distributions based on $300$ simulation runs, where $q \in\{0.1, 0.2, \ldots, 0.9, 0.95\}$. \\
	 One can observe that in the case of the $t$-distribution, the distribution $F^{\mathbf{R}_n}$ admits heavier tails in comparison to the normal case indicated by the outlying point corresponding to the $ 95 \%$ -quantile, not surprisingly due to much less regularity. However, especially for large sample size and dimension, the empirical quantiles behave very similar, which reflects the asymptotic result provided in Theorem \ref{thm}. 
	}\end{example} 
	
\subsection{Strategy of the proof} \label{sec_strat_proof}
	In the following, we point out the main ideas for proving the convergence of the Stieltjes transform $s_n(z)$ of $\bfR$ to $s(z)$ for all $z\in \C^+$. Note that when only assuming a finite moment of order $(2+\delta)$ for $\xi$, the standard argument \eqref{eq:maindiagappr} of approximating $\bfR=\Y\Y^{\top}$ via $\bfS=n^{-1}\X\X^{\top}$ may break down. Additionally, the matrix $\Y$ admits a challenging dependence structure both among the rows and the columns. The latter highlights a fundamental difference to the covariance case (see, e.g., \cite{bai:zhou:2008}), where the columns of $\mathbf{X}$ are independent, or to the correlation matrix $\bfR$ for $\T = \mathbf{I}$ studied in Section \ref{sec_i.i.d.}, where the rows of $\Y$ are independent. Consequently, our setting demands for a more sophisticated analysis. \\ 
	At first sight, the proof of the convergence of the random part $s_n(z) - \E [ s_n(z)]$ (Lemma \ref{lem_step1}) makes use of standard tools such as martingale decomposition, Burkholder's and Azuma's inequality. However, note that additional subtle difficulties arise, since $\D_j(z)=\bfR - \mathbf{y}_j \mathbf{y}_j^\top-z\bfI$ does depend on the $j$th column $\bfy_j$ ($1 \leq j \leq n$) and consequently, $(\E_j - \E_{j-1}) \D_j\inv(z)$ does not vanish in general. (Here, $\E_j$ denotes the conditional expectation with respect to $\bfx_1, \ldots, \bfx_j$ for $1 \leq j \leq n$ and $\E_0$ denotes the usual mean.)
	Thus, instead of $\D_j\inv(z)$, a more suitable approximation of the resolvent $\D\inv(z)=(\bfR-z\bfI)^{-1}$ independent of $\bfy_j$ is needed whose properties are analyzed in this part of the proof.   \\
	Moreover, when considering the non-random part $\E [ s_n(z) ] - s(z)$ (Lemma \ref{lem_step2}), the crucial part lies in considering quadratic forms of the type
	\begin{align} \label{quad_form}
		\bfy_1^\top \mathbf{A} \D_1\inv(z) \bfy_1 - n\inv \tr \T \mathbf{A} \D_1\inv(z)\,
	\end{align}
for some matrix $\mathbf{A}\in\mathbb{C}^{p\times p}$ independent of $\bfy_1$,
	which turns out to be a delicate task since, due to the self-normalization, the components of $\bfy_1$ are not independent and the matrix $\D_1\inv(z)$ depends also on $\bfy_1$. Via an approximation argument (Lemma \ref{lem_Dhat}), we achieve that  $\D_1\inv(z)$ can be replaced by a further matrix independent of $\bfy_1.$ 
	In order to control \eqref{quad_form}, the sparsity assumption \ref{a_sparse_U} turns out to be essential for the proof of Lemma \ref{lem_quad_form}. 
	Then, the asymptotic behavior of \eqref{quad_form} is determined by the (mixed) moments of the self-normalized random variables $Y_{k1}$, $1 \leq k \leq p$, which are analyzed using an integral representation trick for $Y_{k1}^m$, $m\in\N$. 
	This approach connects the fourth moment of $Y_{k1}$ with its square's Laplace transform which enables us to determine its asymptotic order. For the first moment, a similar technique is applied. 
	We emphasize that this analysis calls for particular attention since the random variables $Y_{k1} = Y_{k1}^{(n)}$ form a triangular array ($1 \leq k \leq p =p_n, ~ n\in\N$) and we need uniform bounds over $k,n\in\N.$ For details on the moments of $Y_{k1}$, we refer the reader to Section \ref{sec_mom_y} and Appendix \ref{appendix_laplace}.
	
\subsection{Outlook}
We conclude this section with some comments on potential applications.
To the best of our knowledge, this paper is the first one that establishes some limiting properties of a large sample correlation
  matrix from a population with infinite fourth moment in the dependent case.  As explained in Section \ref{sec_strat_proof},
  the study of such a correlation matrix is involved and most tools available in the literature on sample covariance matrices are not anymore applicable here.   Theoretical tools
  on this topic are not numerous indeed. Much needs to be done for the
  development of such techniques to facilitate the proof of results which have high impact on statistical
  applications. For example, much of statistical inference based on sample
  correlation matrices requires
  a central 
  limiting theorem for linear statistics of eigenvalues,
  which seems  beyond reach in the infinite fourth moment case at the moment. We remark that in this case, a central limit theorem for linear spectral statistics is also not available for the sample covariance matrix. In fact, the properly normalized trace of $\bfS$, for example, converges to an infinite variance stable distribution whenever $\xi^2$ is in the domain of attraction of an $\alpha$-stable law with $\alpha\in (0,2)$.
  
  Furthermore, it should be expected that linear spectral statistics for $\bfR$ (if at all valid in the infinite fourth moment framework) will depend on more characteristics than just the self-normalized fourth moment $\E[Y_{11}^4]$. For example, in the i.i.d.\ case the results in \cite{heiny:parolya:2022} indicate that for symmetric $\xi$ satisfying $\P(|\xi|>x)=x^{-\alpha} L(x)$ for some slowly varying function $L$ a phase transition appears in the asymptotic behavior of the logarithm of the determinant of $\bfR$ at $\alpha=3$ which is the border of finite and infinite third moment $\E[|\xi|^3]$. 
 On the other hand, if $\E[\xi^4]<\infty$, the central limit theorem for the logarithm of the determinant of $\bfR$ depends on the distribution of $\xi$ only through $\E[\xi^4]$ even in the dependent case \cite{heiny:parolya:kurowicka:2021}.
 
  In view of the evolution of the literature on sample covariance matrices, it is clear that 
  any further
  development on asymptotic properties of the sample correlation
  matrix from a population with infinite fourth moments will require the
  knowledge of the LSD as developed in this paper. In this sense, this work can be viewed as a
  meaningful first step toward thoughtful  statistical applications that might  be developed subsequently.	
	
	\section{Proofs of results in the i.i.d.\ case} \label{sec_proof_i.i.d.} 
	Throughout this section, we work under the assumptions of Section \ref{sec_i.i.d.}. 
	In particular, recall that $\T = \mathbf{I} \in \R^{p\times p}$, $\E[|\xi|^{1+\delta}]<\infty$ for some $\delta>0$, and $\ty_1\in \R^{1\times n}$ denotes the first row of $\Y$. For convenience of notation, let $\Y_{-1}\in \R^{(p-1)\times n}$ be the matrix $\Y$ with the first row removed. 
	
\subsection{Proofs of Theorem \ref{thm:MP}, Lemma~\ref{lem:propw} and Theorem~\ref{thm:2ndchar}}\label{sec:proofMP}

\begin{proof}[Proof of Theorem~\ref{thm:MP}]
Let $z\in\mathbb{C}^+$ and recall that $s_n$ denotes the Stieltjes transform of $F^{\bfR}$. We will show that both (i) and (ii) are equivalent to 
\begin{equation}\label{eq:3.3}
\E\left[ \frac{1}{1+\ty_1 \bfD(z) \ty_1^\top} \right] =
\E\left[ \frac{1}{1+\gamma s_n(z)-z^{-1} (1-\gamma)} \right] + o(1)\,, \quad \nto\,.
\end{equation} 

First, we prove the equivalence of \eqref{eq:3.3} and (i). Assuming \eqref{eq:3.3} we have by Lemma \ref{lem:expectedtransform1} that 
\begin{equation}\label{eq:3.4}
\E\left[ \frac{1}{1+\ty_1 \bfD(z) \ty_1^\top} \right]
= \frac{1}{1+\gamma \E[s_n(z)]-z^{-1} (1-\gamma)} + o(1)\,.
\end{equation} 
 Considering Lemma \ref{lemma3.2}, we obtain
 \begin{equation}\label{eq:3.5}
-z \E[s_n(z)] = \frac{1}{1+\gamma \E[s_n(z)]-z^{-1} (1-\gamma)} + o(1)\,.
\end{equation} 
Therefore, $\E[s_n(z)]$ converges to the unique positive solution of the equation
\begin{equation}\label{eq:smp}
-zS=\frac{1}{1+\gamma S - z^{-1} (1-\gamma)}\,,
\end{equation}
which is well known to be the Stieltjes transform of the \MP~law. See for instance \cite[p.~13, eq.~(2.9)]{yao:zheng:bai:2015} or \cite[p.~55]{bai:silverstein:2010} where the equivalent formulation $\gamma z S^2 + (z+\gamma -1) S +1 = 0$ is preferred. By Lemma \ref{lem:stieltjes}, statement (i) follows.

Now, let us assume (i). Lemmas \ref{lem:stieltjes} and \ref{lem:expectedtransform1} and the fact that the Stieltjes transform of the  \MP~law satisfies \eqref{eq:smp} imply 
 \eqref{eq:3.5}.
 Thanks to Lemmas \ref{lemma3.2} and \ref{lem:stieltjes} we get  \eqref{eq:3.3}.

Next, we prove the equivalence of \eqref{eq:3.3} and (ii). Using the fact that
$\bfR=\Y\Y^\top$ and $\Y^\top\Y$ have the same non-zero eigenvalues we obtain
\begin{equation}\label{eq:sgdsglop}
s_n(z)
= \frac{n}{p} s_{\Y^\top\Y}(z)+ \frac{1}{z} \Big( \frac{n}{p}-1 \Big)\,.
\end{equation}
By Lemma 6.9 in \cite{bai:silverstein:2010}, we have
\begin{align*}
	\left| \tr  \bfD (z)  - 
	\tr \lb \bfY^\top\bfY -z \bfI_n)^{-1}\rb \right|
	\leq \frac{1}{\Im(z)}.
\end{align*}
Since 
$p/n \to \gamma$, this implies 
\begin{equation}\label{eq:resa}
\gamma s_n(z)-\frac{1-\gamma}{z} - \frac{1}{n} \tr\bfD(z) \cas 0\,, \qquad \nto\,
\end{equation}
in view of \eqref{eq:sgdsglop}. Therefore, equation \eqref{eq:3.3} is equivalent to 
\begin{equation}\label{eq:3.3'}
\E\left[ \frac{1}{1+\ty_1 \bfD(z) \ty_1^\top} \right] =
\E\left[ \frac{1}{1+n^{-1} \tr\big( \bfD(z) \big)} \right] + o(1)\,, \quad \nto\,.
\end{equation}

Assume (ii). 
Using \eqref{eq:le4} and \eqref{eq:le5}, we have
\begin{align*}
 \left| \frac{1}{1+n^{-1} \tr\big( \bfD(z) \big)}-\frac{1}{1+\ty_1 \bfD(z) \ty_1^\top} \right| 
= &  \left| \frac{W_n(z)}{[1+n^{-1} \tr\big( \bfD(z) \big)] [1+\ty_1 \bfD(z) \ty_1^\top]} \right| 
\\ 
\le & \lb \frac{|z|}{\Im(z)} \rb^2 |W_n(z)|\,.
\end{align*}
Hence, boundedness of $|W_n(z)|$ and $W_n(z)\cip 0$ yield \eqref{eq:3.3'}.

Finally, assume \eqref{eq:3.3'} holds. From Lemma \ref{le:eyy} we have
\begin{equation}\label{eq:awe}
\E \big[\ty_1 \bfD(z) \ty_1^\top |\Y_{-1} \big]- \frac{1}{n} \tr \big( \bfD(z) \big)  \cas  0\,, \quad \nto\,.
\end{equation}
Combined with \eqref{eq:3.3'} this means that 
\begin{equation*}
\lim_{\nto} \E\left[ \frac{1}{1+\ty_1 \bfD(z) \ty_1^\top} \right] -
\E\left[ \frac{1}{1+\E \big[\ty_1 \bfD(z) \ty_1^\top |\Y_{-1} \big]} \right] = 0\,.
\end{equation*}
Then Lemma \ref{lemma3.4} yields 
\begin{equation*}
\ty_1 \bfD(z) \ty_1^\top -\E \big[\ty_1 \bfD(z) \ty_1^\top |\Y_{-1} \big]\cip 0\,, \qquad \nto\,,
\end{equation*}
where we used that the random variable $\ty_1 \bfD (z) \ty^\top_1$ is bounded (for details, see \eqref{eq:Wbounded}). 
In conjunction with \eqref{eq:awe}, we obtain (ii).
\end{proof}

\begin{proof}[Proof of Lemma~\ref{lem:propw}]
Let $z\in\mathbb{C}^+$. First, we note that $\E[Y_{11} Y_{12}]=o(n^{-1})$ holds due to Lemma \ref{lem_mixed_mom_i.i.d.}. Consequently, an application of Lemma \ref{le:eyy} yields $\E[W_n(z)] \to 0$.
Let us check that $W_n(z)$ is a bounded random variable. We have 
\begin{equation}\label{eq:Wbounded}
\begin{split}
|W_n(z)| &\le |\ty_1 \bfD(z) \ty_1^\top| + \frac{1}{n} |\tr\big( \bfD(z) \big)| 
 \leq 2 \| \bfD (z) \| \leq \frac{2}{\Im(z)} ,
\end{split}
\end{equation}
where we used $\ty_1 \ty_1^\top =1$, \eqref{eq:le1} and the estimate
\begin{align}
    \label{est_quad_form}
    | \bfx^\top \mathbf{A} \bfx | \leq \bfx^\top \bfx \| \mathbf{A} \|, \qquad\bfx\in\R^n, ~\mathbf{A} \in \C^{n\times n}.
\end{align}

For simplicity of notation, we write $\bfD=(d_{ij}) = \bfD(z)$ and $Y_i$ instead of $Y_{1i}$.
We decompose $W_n(z) = W_{n,1}(z) + W_{n,2}(z),$
where
\begin{align*}
    W_{n,1}(z) = & \sum\limits_{i=1}^n d_{ii} \lb Y_i^2 - n\inv \rb \quad \text{ and } \quad
    W_{n,2}(z) =  \sum\limits_{\substack{i,j=1, \\ i\neq j}}^n d_{ij} Y_i Y_j.
  \end{align*}
  We first investigate the second absolute moment of $W_{n,1}(z)$. This gives
\begin{equation*}
\begin{split}
\E[| W_{n,1}(z)| ^2|\bfD]&= \E\left[ \Big|\sum_{i=1}^n d_{ii} \big(Y_i^2 - n\inv \big) \Big|^2 |\bfD \right] \\
&= \sum_{i=1}^n |d_{ii}|^2 (\E[Y_i^4]-\tfrac{1}{n^2}) + \sum_{\substack{i,j=1, \\ i\neq j}}^n d_{ii} \overline{d_{jj}} (\E[Y_i^2 Y_j^2]-\tfrac{1}{n^2}).
\end{split}
\end{equation*}
Observe that (by an application of \eqref{eq:le1})
\begin{equation}\label{eq:dii}
\max_{i,j=1,\ldots,n} |d_{ij}| \le \twonorm{\bfD} \le \frac{1}{\Im(z)},
\end{equation}
and additionally, by using \eqref{est_quad_form}, 
\begin{equation}\label{eq:dij}
\Big| \sum_{i,j=1}^n d_{ij} \Big|= \left|  1_n^\top \bfD 1_n \right| \le \frac{n}{\Im(z)}\,.
\end{equation}
Taking expectation of the identity $(Y_1^2+\cdots +Y_n^2)^2=1$, we obtain
\begin{equation}\label{eq:gtesaw}
\E[Y_{1}^2 Y_{2}^2] -n^{-2} =  \frac{1}{n^2 (n-1)} - \frac{\E[Y_1^4]}{n-1} . 
\end{equation}
By \eqref{eq:dii}-\eqref{eq:gtesaw},
it follows that 
\begin{equation*}
\E[|W_{n,1}(z)|^2|\bfD]= \E[Y_1^4] \sum_{i=1}^n |d_{ii}|^2  - \frac{\E[Y_1^4]}{n-1} \sum_{\substack{ i,j=1, \\ i\neq j}}^n d_{ii} \overline{d_{jj}}
+o(1)\,.
\end{equation*}
Taking expectation, we get \eqref{eq:EW2}.

For the second absolute moment of $W_{n,2}(z)$ we obtain
\begin{align}\label{eq:dgsde}
\E[|W_{n,2}(z)|^2|\bfD]&=\sum\limits_{\substack{i,j,k,\ell=1, \\ i\neq j, k\neq \ell}}^n d_{ij} \overline{d_{k\ell}} \,\E[Y_i Y_jY_kY_{\ell}] \notag\\
&\lesssim n^2 \E[Y_1 Y_2Y_3Y_4] + n^2 |\E[Y_1^2 Y_2Y_3]| + n \E[Y_1^2 Y_2^2] \,, 
\end{align}
where we used \eqref{eq:dij} as well as
\begin{align*}
\sum_{i,j=1}^n |d_{ij}|^2 &= \tr\big(\bfD \bfD^{\star}\big)\le n \norm{\bfD}^2\le \frac{n}{\Im(z)^2}\, \quad \text{ and }\\
\sum_{i,j=1}^n |d_{ij}|&\le n \max_{i=1,\ldots,n} \sum_{j=1}^n |d_{ij}|\le \frac{n^{3/2}}{\Im(z)}\,.
\end{align*}
For the last inequality the equivalence of the row-sum and operator matrix norms and \eqref{eq:dii} were utilized.
It remains to show that each term on the \rhs\ of \eqref{eq:dgsde} tends to zero. From $(Y_1^2+\cdots +Y_n^2)^2=1$ we immediately deduce that $n^2\E[Y_1^2 Y_2^2]\lesssim 1$. Next, we observe that
$\E[Y_1Y_2]=\E[Y_1Y_2(Y_1^2+\cdots +Y_n^2)]=(n-2) \E[Y_1^2Y_2Y_3] + 2 \E[Y_1^3Y_2]$ which  in combination with \Holder's inequality and Lemma~\ref{lem_mixed_mom_i.i.d.} yields
\begin{align*}
n^2 |\E[Y_1^2 Y_2Y_3]|&\lesssim
n\E[Y_1Y_2] +n|\E[Y_1^3Y_2]| \le n\E[Y_1Y_2]+n \sqrt{\E[Y_1^4] \E[Y_1^2Y_2^2]} =o(1)\,.
\end{align*}
Finally, we apply \cite[Lemma~3.1]{gine:goetze:mason:1997} to bound $\E[Y_1Y_2Y_3Y_4]$ and get
\begin{align*}
0\le n^2 \E[Y_1 Y_2Y_3Y_4] &\lesssim n^2 \sqrt{ \E[Y_1Y_2]} \sqrt{ \E[Y_1Y_2Y_3Y_4Y_5Y_6]}\\ 
&\le n^2 \sqrt{ \E[Y_1Y_2]} \sqrt{ \E[Y_1^2Y_2^2Y_3^2]}\le  \sqrt{ n\,\E[Y_1Y_2]}\,.
\end{align*}
Since $n\E[Y_1Y_2]$ tends to zero by Lemma \ref{lem_mixed_mom_i.i.d.}, the proof is complete. 
\end{proof}
\begin{remark}\label{remark_w2}{\em
For the off-diagonal part $W_{n,2}(z)$, we can additionally show that
\begin{align*}
    \E\big[W_{n,2}(z) \,\big|\bfD(z)\big]= \E [ Y_1 Y_2] 
  \sum\limits_{\substack{i,j=1, \\ i\neq j}}^n d_{ij},
\end{align*}
where we note that $\E [ Y_1 Y_2]\ge 0$ by \eqref{eq:EY1Y2}.
Hence, using \eqref{eq:dij} 
 it follows
\begin{align}\label{eq:EWn2}
    \big|\E\big[W_{n,2}(z)\big] \big|\le  \E [ Y_1 Y_2]\, \frac{n}{\Im(z)}\,.
\end{align}}
\end{remark}

\begin{proof}[Proof of Theorem~\ref{thm:2ndchar}]
From Lemma \ref{lemma3.2} we have
\begin{equation*}
-z \E[s_n(z)]= \E\left[ \frac{1}{1+W_n(z)+\tfrac{1}{n} \tr(\bfD(z))} \right]\,, \qquad z\in \C^+.
\end{equation*} 
In view of \eqref{eq:resa}, this is equivalent to 
\begin{equation*}
-z \E[s_n(z)]= \E\left[ \frac{1}{1+W_n(z) +\gamma s_n(z)-z^{-1} (1-\gamma)} \right] +o(1)
, \qquad z\in \C^+.
\end{equation*}
An application of Lemma \ref{lem:expectedtransform1} concludes the proof.
\end{proof}

\subsection{Auxiliary results} \label{sec_aux_i.i.d.}

Recall the definition of $Y_{ij}$ in \eqref{def_yij} and that $\E[|\xi|^{1+\delta}]<\infty$ for some $\delta>0$. The following result states that the mixed moment of $Y_{11}$ and $Y_{12}$ decreases at a sufficiently fast rate under the assumptions proposed for the i.i.d. case.
\begin{lemma} \label{lem_mixed_mom_i.i.d.}
For any $\delta' < \delta$ it holds 
\begin{align*}
    \lim\limits_{n\to\infty} n^{1+\delta'} \E [ Y_{11} Y_{12} ] =0. 
\end{align*}
In particular, we have 
$   \lim\limits_{n\to\infty} n \E [ Y_{11} Y_{12}] = 0$.
\end{lemma}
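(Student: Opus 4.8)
The plan is to compute $\E[Y_{11}Y_{12}]$ via the integral representation \eqref{formula_inv} applied to the normalizing factor. Writing $Y_{1j}=X_{1j}/\sqrt{X_{11}^2+\cdots+X_{1n}^2}$ and noting that $\T=\mathbf I$ here, so $X_{1j}=\tilde X_{1j}$ are i.i.d.\ copies of $\xi$, one has
\begin{align*}
\E[Y_{11}Y_{12}]
=\E\!\left[\frac{\xi_1\xi_2}{(\xi_1^2+\cdots+\xi_n^2)}\right]
=\int_0^\infty \big(\E[\xi\,\e^{-t\xi^2}]\big)^2\,\big(\E[\e^{-t\xi^2}]\big)^{n-2}\,\dint t\,,
\end{align*}
where I used $y^{-1}=\int_0^\infty \e^{-ty}\dint t$, Fubini, and independence of $\xi_1,\ldots,\xi_n$. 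The key point is that $\E[\xi\,\e^{-t\xi^2}]$ vanishes when $\xi$ is symmetric (which is why $\E[Y_{11}Y_{12}]\ge 0$ with equality iff $\xi\eid-\xi$), and in general it is controlled by the $(1+\delta)$-moment assumption.

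First I would bound $|\E[\xi\,\e^{-t\xi^2}]|$. Since $\E[\xi]=0$ is not assumed in the $(1+\delta)$-moment regime, I instead write $\E[\xi\,\e^{-t\xi^2}]=\E[\xi(\e^{-t\xi^2}-1)]$, and use $|\e^{-t\xi^2}-1|\le \min(1,t\xi^2)\le (t\xi^2)^{\theta}$ for any $\theta\in[0,1]$. Choosing $\theta$ so that $1+2\theta$ matches the available moment (i.e.\ $\theta<\delta/2$), this gives $|\E[\xi\,\e^{-t\xi^2}]|\lesssim t^{\theta}\,\E[|\xi|^{1+2\theta}]\lesssim t^{\theta}$ for $t\le 1$, and the trivial bound $|\E[\xi\,\e^{-t\xi^2}]|\le \E[|\xi|\,\e^{-t\xi^2}]\le \E[|\xi|]$ (finite since $1+\delta>1$... actually one should be slightly careful and use $\E[|\xi|\1_{|\xi|\le 1}]+\E[|\xi|\e^{-t\xi^2}\1_{|\xi|>1}]$, the second term being small for large $t$) for $t$ large. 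In any case $(\E[\xi\,\e^{-t\xi^2}])^2$ decays polynomially at both ends.

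Next I would control $(\E[\e^{-t\xi^2}])^{n-2}$. The Laplace transform $\phi(t):=\E[\e^{-t\xi^2}]$ satisfies $\phi(t)<1$ for $t>0$ (since $\xi$ is non-degenerate), $\phi(0)=1$, and $1-\phi(t)\ge c\min(t,1)$ for some $c>0$ near $0$ because $\P(\xi^2\ge a)>0$ for some $a>0$. Splitting the integral at $t=1$: on $[1,\infty)$, $\phi(t)^{n-2}\le \phi(1)^{n-2}$ decays geometrically and swamps the at-most-polynomial growth of the other factor, contributing $o(n^{-M})$ for any $M$; on $[0,1]$, $\phi(t)^{n-2}\le \e^{-(n-2)(1-\phi(t))}\le \e^{-c(n-2)t}$, so
\begin{align*}
n^{1+\delta'}\int_0^1 \big(\E[\xi\,\e^{-t\xi^2}]\big)^2 \phi(t)^{n-2}\,\dint t
\lesssim n^{1+\delta'}\int_0^1 t^{2\theta}\,\e^{-c(n-2)t}\,\dint t
\lesssim n^{1+\delta'}\,(n-2)^{-1-2\theta}\,,
\end{align*}
after the substitution $s=(n-2)t$ and using $\int_0^\infty s^{2\theta}\e^{-cs}\dint s<\infty$. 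Since $\theta$ can be taken with $2\theta>\delta'$ (as $\delta'<\delta$ we may pick $\theta\in(\delta'/2,\delta/2)$), this bound is $o(1)$, which gives the claim; the final ``in particular'' statement is the case $\delta'=0$, or simply taking any fixed $\delta'\in(0,\delta)$.

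The main obstacle is the first step: getting a decay estimate for $\E[\xi\,\e^{-t\xi^2}]$ of the form $t^{\theta}$ as $t\to 0$ with $2\theta>\delta'$, using \emph{only} $\E[|\xi|^{1+\delta}]<\infty$ and $\E[\xi^2]=1$ if it exists (not necessarily $\E[\xi]=0$). The interpolation $|\e^{-t\xi^2}-1|\le (t\xi^2)^{\theta}$ combined with $\E[|\xi|^{1+2\theta}]<\infty$ for $\theta<\delta/2$ is what makes this work, and one must also verify that the large-$t$ regime contributes negligibly — here the non-degeneracy of $\xi$ (hence $\phi(1)<1$) is exactly what is needed. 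Everything else is routine splitting and substitution.
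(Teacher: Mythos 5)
Your route is essentially the paper's: the Laplace--transform representation $\E[Y_{11}Y_{12}]=\int_0^\infty\big(\E[\xi\e^{-t\xi^2}]\big)^2\varphi^{n-2}(t)\,\dint t$ with $\varphi(t)=\E[\e^{-t\xi^2}]$, a split of the integral, a polynomial bound on $\E[\xi\e^{-t\xi^2}]$ near $0$ driven by the $(1+\delta)$-moment, and smallness of $\varphi^{n-2}$ away from $0$. Your treatment of the small-$t$ part is a nice, somewhat more elementary variant: instead of the Gin\'e--G\"otze--Mason identity $\E[\xi\e^{-s\xi^2}]=-s\,\E[\xi^3\e^{-s\theta\xi^2}]$ (which the paper combines with a sup-bound and dominated convergence to get the majorant $s^{-1-\delta'+\delta}$), you interpolate $|\e^{-x}-1|\le x^{\theta}$ with $\theta\in(\delta'/2,\delta/2)$ and use the explicit bound $\varphi^{n-2}(t)\le\e^{-c(n-2)t}$ on $(0,1]$, which gives $n^{1+\delta'}(n-2)^{-1-2\theta}\to0$ directly, with no dominated convergence needed. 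That part is correct.

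Two points need repair. First, the tail $t\ge 1$: bounding $\varphi(t)^{n-2}\le\varphi(1)^{n-2}$ and the other factor by a constant (or even by $t^{-1}$, which is what the sup-bound $\E[|\xi|\e^{-t\xi^2}]\lesssim t^{-1/2}$ yields after squaring) does not close the argument, because the resulting $t$-integral over $[1,\infty)$ diverges, and $\varphi(t)^{n-2}$ need not decay in $t$ at all (if $\P(\xi=0)>0$ it is bounded below by $\P(\xi=0)^{n-2}$). ``Geometric in $n$ swamps polynomial'' is therefore not sufficient as stated; you need a majorant that is integrable in $t$. The paper's fix is exactly this: bound only one factor by $s^{-1/2}$, keep the other as $\E[|\xi|\e^{-s\xi^2}]$, and note via Fubini that $\int_0^\infty s^{-1/2}\E[|\xi|\e^{-s\xi^2}]\,\dint s\le\Gamma(1/2)$; then $n^{1+\delta'}\varphi^{n-2}(\varepsilon)\to0$ finishes. (Alternatively, with $\eta$ an independent copy of $\xi$, $\int_1^\infty\big(\E[\xi\e^{-t\xi^2}]\big)^2\dint t\le\E\big[\tfrac{|\xi\eta|}{\xi^2+\eta^2}\e^{-(\xi^2+\eta^2)}\big]\le\tfrac12$.) Second, your parenthetical claim that $\E[\xi]=0$ is not assumed is incorrect: the model of Section~\ref{sec:model} takes $\xi$ centered, and your identity $\E[\xi\e^{-t\xi^2}]=\E[\xi(\e^{-t\xi^2}-1)]$ is precisely where centering enters (the paper uses it, too, inside the Gin\'e--G\"otze--Mason identity). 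Without $\E[\xi]=0$ the lemma is false, since then $\E[\xi\e^{-t\xi^2}]\to\E[\xi]\neq0$ as $t\to0$ and $n\,\E[Y_{11}Y_{12}]$ has a nonzero limit; so the step is fine under the paper's hypotheses, but it should be justified as an application of centering rather than as a way of avoiding it.
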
 
\begin{remark}{\em
 If $\xi$ is in the domain of attraction of the normal law, then $\E[Y_{11} Y_{12}]=o(n^{-2})$ (see \cite{gine:goetze:mason:1997}).  It is interesting to remark that $\E[Y_{11} Y_{12}]=0$ if and only if the distribution of $\xi$ is symmetric. Also note that, in general, an application of \Holder's inequality is not quite enough for the weaker statement in the last line of Lemma \ref{lem_mixed_mom_i.i.d.} since $\E[Y_{11} Y_{12}]\le \E[Y_{11}^2]=n^{-1}$. For the proof of Lemma~\ref{lem_mixed_mom_i.i.d.}, we use an integral representation involving the Laplace transform of $\xi^2$. 
}\end{remark}

\begin{proof}[Proof of Lemma \ref{lem_mixed_mom_i.i.d.}]
	Using \eqref{formula_inv} and Fubini's theorem, we obtain the representation 
	\begin{align}\label{eq:EY1Y2}
	   n^{1+\delta'} \E [ Y_{11} Y_{12} ] = n^{1+\delta'} \int\limits_0^\infty 
	    \lb \E \left[ \xi \exp(-s\xi^2) \right] \rb^2 \varphi^{n-2} (s) ds ,
	\end{align}
	where $\varphi(s) = \E [ \exp(-s \xi^2) ], s>0$,
	denotes the Laplace transform of $\xi^2$. 
	Let $\varepsilon >0. $ We observe that
	\begin{align*}
	    	\int\limits_{\varepsilon}^{\infty} \lb \E \left[ \xi \exp(-s\xi^2)  \right] \rb^2 \varphi^{n-2} (s) ds 
	    	\lesssim 	\int\limits_{\varepsilon}^{\infty} s^{- \frac{1}{2}}  \E \left[ | \xi | \exp(-s\xi^2) \right] \varphi^{n-2} (s) ds,
	\end{align*}
where we used that $\E | \xi | \exp ( - s \xi^2)  \lesssim s^{-1/2}$ by maximizing over $\xi$.
	Thus, we recover a simplified version of the integrand in \eqref{int_first_mom} in the proof of Proposition \ref{thm_first_mom} in this case. 
    As a consequence of Lemma 3.1 in \cite{fuchs:joffe:teugels:2001}, we have
    \begin{align*}
        n^{1+\delta'} \varphi^{n-2} (s) 
        \leq \frac{ n^{1+\delta'} \varphi^n(\varepsilon)  }{\varphi^2 (\varepsilon) } =o(1), ~\quad  \varepsilon < s < \infty.
    \end{align*}
    We are allowed to apply the dominated convergence theorem (similarly as in the proof of Proposition~\ref{thm_first_mom}) and get
    \begin{align*}
       \lim\limits_{n\to\infty} n^{1+\delta'} 	\int\limits_{\varepsilon}^{\infty} \lb \E [ \xi \exp(-s\xi^2)] \rb^2 \varphi^{n-2} (s) ds  = 0. 
    \end{align*}
It remains to show that for $\delta'< \delta$ the integral 
	\begin{align*}
	   E_n (\varepsilon) =  \int\limits_0^\varepsilon n^{1+\delta'}
	    \lb \E \left[ \xi \exp(-s\xi^2)  \right] \rb^2 \varphi^{n-2} (s) ds
	\end{align*}
	converges to zero, as $n$ tends to infinity. 
	Let $s\in(0,\varepsilon).$
	We use the following identity given on page 1525 of \cite{gine:goetze:mason:1997},
	\begin{align*}
	    \E \left[ \xi \exp(-s\xi^2)  \right] = - s \E \left[ \xi^3 \exp(-s \theta \xi^2)  \right]
	\end{align*}
	for some $\theta $ uniformly distributed on the interval $[0,1]$ and independent of $\xi$.  This implies
	\begin{align*}
	    \lb  \E \left[ \xi \exp(-s\xi^2)  \right] \rb^2 
	    &=  s^2 \lb \E \left[ \xi^3 \exp(-s \theta \xi^2)  \right] \rb^2 
	    \leq  s^2 \lb \E \left[ | \xi |^{1+\delta} |\xi|^{2-\delta} \exp(-s \theta \xi^2)  \right] \rb^2 \\
	    &\lesssim  s^2 \lb s^{-1/2 (2-\delta)} \rb^2 \lb
	    \E | \xi|^{1+\delta} \E [ \theta^{-1/2(2-\delta)} ]\rb^2 
	    \lesssim  s^{\delta},
	\end{align*}
 where we optimized over $\xi$ for the second to last inequality. 
	Using also that
	(similarly to \eqref{ineq_nsphi}) 
	    \begin{align*}
	        \lb n s \rb ^{1+\delta'} \varphi^{n - 2} (s) \lesssim 1, ~ s \in (0,\varepsilon),
	    \end{align*}
	    we get
	    \begin{align*}
	        n^{1+\delta'}
	    \lb \E \left[ \xi \exp(-s\xi^2)  \right] \rb^2 \varphi^{n-2} (s)
	    =  \lb n s \rb ^{1+\delta'} \varphi^{n-2} (s)
	    \lb \E \left[ \xi \exp(-s\xi^2)  \right] \rb^2 
	    s^{-1-\delta'}
	    \lesssim s^{- 1 - \delta' + \delta},
	    \end{align*}
	    which is integrable on $(0,\varepsilon)$ since $\delta' < \delta$.
	       As a result, we are allowed to apply the dominated convergence theorem and the assertion $E_{n}(\varepsilon)= o(1)$ follows from $n^{1+\delta'}\varphi^{n-2}(s) = o(1)$, which is a consequence of Lemma 3.1 in \cite{fuchs:joffe:teugels:2001}.
\end{proof}

By Corollary 2 in \cite{elkaroui:2009}, we have the following lemma.
\begin{lemma}\label{lem:expectedtransform1}
It holds
\begin{equation*}
s_n(z) - \E[s_n(z)] \cas 0\,, \qquad \nto\,, z\in \mathbb{C}^+.
\end{equation*}
\end{lemma}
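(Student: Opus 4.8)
The plan is to prove the almost sure concentration of $s_n(z)$ around its mean by the standard martingale-difference argument, exploiting the fact that in this section $\T=\bfI_p$, so that the rows $\ty_1,\dots,\ty_p$ of $\Y$ are i.i.d. First I would pass from $\bfR=\Y\Y^\top$ to the Gram matrix $M:=\Y^\top\Y\in\R^{n\times n}$: by \eqref{eq:sgdsglop} one has $s_n(z)=\tfrac np\, s_M(z)+\tfrac1z\big(\tfrac np-1\big)$, hence $s_n(z)-\E[s_n(z)]=\tfrac np\big(s_M(z)-\E[s_M(z)]\big)$, and since $n/p$ is bounded it suffices to show $s_M(z)-\E[s_M(z)]\cas0$, i.e. $\tfrac1n\big(\tr(M-z\bfI_n)^{-1}-\E\tr(M-z\bfI_n)^{-1}\big)\cas0$.

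Next I would introduce the filtration $\mathcal F_k=\sigma(\ty_1,\dots,\ty_k)$, write $\E_k[\cdot]=\E[\cdot\mid\mathcal F_k]$ with $\E_0=\E$, and telescope
\[
\tr(M-z\bfI_n)^{-1}-\E\tr(M-z\bfI_n)^{-1}=\sum_{k=1}^p\beta_k,\qquad \beta_k:=(\E_k-\E_{k-1})\tr(M-z\bfI_n)^{-1}.
\]
Since $M=\sum_{k=1}^p\ty_k^\top\ty_k$, set $M_k:=M-\ty_k^\top\ty_k=\sum_{j\ne k}\ty_j^\top\ty_j$; this is $\mathcal F_p$-measurable but independent of $\ty_k$, so $\E_k\tr(M_k-z\bfI_n)^{-1}=\E_{k-1}\tr(M_k-z\bfI_n)^{-1}$ and therefore
\[
\beta_k=(\E_k-\E_{k-1})\big[\tr(M-z\bfI_n)^{-1}-\tr(M_k-z\bfI_n)^{-1}\big].
\]
As $M$ and $M_k$ differ by the rank-one matrix $\ty_k^\top\ty_k$, the rank-one perturbation bound (\cite[Lemma 6.9]{bai:silverstein:2010}, already used near \eqref{eq:resa}) gives $\big|\tr(M-z\bfI_n)^{-1}-\tr(M_k-z\bfI_n)^{-1}\big|\le 1/\Im(z)$, whence $|\beta_k|\le 2/\Im(z)$. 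Thus $(\beta_k)_{k=1}^p$ is a bounded martingale difference sequence with respect to $(\mathcal F_k)$.

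I would then apply the Azuma--Hoeffding inequality (to the real and imaginary parts of $\sum_k\beta_k$): for every $\vep>0$,
\[
\P\Big(\Big|\sum_{k=1}^p\beta_k\Big|>n\vep\Big)\le C\exp\!\big(-c\,\Im(z)^2\, n^2\vep^2/p\big),
\]
and since $p/n\to\gamma\in(0,\infty)$, for all large $n$ this is bounded by $C\exp(-c'\vep^2 n)$, which is summable. A Borel--Cantelli argument applied along a countable set of $\vep$'s decreasing to $0$ then yields $\tfrac1n\sum_{k=1}^p\beta_k\to0$ almost surely, i.e. $s_M(z)-\E[s_M(z)]\cas0$, and hence $s_n(z)-\E[s_n(z)]\cas0$ for each fixed $z\in\C^+$.

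I do not expect a serious obstacle; the only point deserving attention is that this decomposition works precisely because the rows $\ty_k$ are independent, which is special to the i.i.d.\ case $\T=\bfI_p$ of this section (in the dependent setting of Section~\ref{sec_dependent} this step must be replaced by the more delicate argument behind Lemma~\ref{lem_step1}). Note also that no moment assumption on $\xi$ beyond non-degeneracy is needed. Finally, one may equally run the martingale over the rows directly on $\tr(\bfR-z\bfI_p)^{-1}$, using that deleting the $k$th row and column of $\bfR$ yields $\Y_{[k]}\Y_{[k]}^\top$ together with the identity relating the resolvent traces of $\Y\Y^\top$ and $\Y^\top\Y$; the increments are bounded in the same way.
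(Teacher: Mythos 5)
Your argument is correct, but it is not the paper's: the paper proves Lemma~\ref{lem:expectedtransform1} simply by citing Corollary~2 of \cite{elkaroui:2009}, which already provides the almost sure concentration of the Stieltjes transform of a sample correlation matrix around its mean. What you propose instead is a self-contained proof, and it goes through as written: with $\T=\bfI_p$ the rows $\ty_1,\dots,\ty_p$ are indeed i.i.d., so $M_k=M-\ty_k^\top\ty_k$ is independent of $\ty_k$ and the terms $(\E_k-\E_{k-1})\tr(M_k-z\bfI_n)^{-1}$ vanish; the rank-one interlacing bound from \cite[Lemma~6.9]{bai:silverstein:2010} makes the martingale differences bounded by $2/\Im(z)$; Azuma plus $p/n\to\gamma$ gives exponential tails summable in $n$, and Borel--Cantelli along $\vep=1/m$ finishes; the reduction via \eqref{eq:sgdsglop} is legitimate since the deterministic term cancels and $n/p$ is bounded. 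Your route buys transparency and the explicit observation that no moment condition on $\xi$ is used (the self-normalized rows have unit norm, so everything is automatically bounded), and it mirrors the strategy the paper itself deploys in the harder dependent setting of Lemma~\ref{lem_step1}, where the martingale must be run over the columns and the resolvent has to be replaced by surrogates $\hat{\D}_j$ because the columns of $\Y$ are not independent. The paper's citation, by contrast, is shorter and leans on a known general concentration result rather than re-deriving it. One small point worth a sentence in a written version: the rows are only well defined on the event that no row of $\X$ vanishes, which has probability tending to one (and probability one if $\P(\xi=0)=0$); this is harmless but should be acknowledged if you aim for a fully self-contained proof.
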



For convenience, the following lemma is formulated for the matrix $\bfY$ defined in \eqref{def_yij}. However, its proof reveals that it holds more generally for any $p\times n$ random matrix with i.i.d. rows. 
\begin{lemma}\label{lemma3.2}
For the $p\times n$ random matrix $\Y$, it holds
\begin{equation*}
-z \E[s_{\Y\Y^\top}(z)]= \E\left[ \frac{1}{1+\ty_1 (\bfY_{-1}^\top\bfY_{-1} -z \bfI_n)^{-1} \ty_1^\top} \right]\,, \quad z\in\mathbb{C}^+.
\end{equation*} 
\end{lemma}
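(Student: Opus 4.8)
The plan is to compute $\E[s_{\Y\Y^\top}(z)]$ via the standard resolvent approach, exploiting the rank-one decomposition of $\bfY^\top\bfY$ along the rows of $\bfY$. First I would write $\bfR = \bfY\bfY^\top$ and use the fact (already invoked via \eqref{eq:sgdsglop}) that $\bfY\bfY^\top$ and $\bfY^\top\bfY$ have the same non-zero eigenvalues, but here it is cleaner to work directly with the $p\times p$ matrix: set $\bfB = \bfY\bfY^\top = \sum_{k=1}^p \ty_k^\top \ty_k$, where $\ty_1,\dots,\ty_p$ are the rows of $\bfY$. Wait — the statement is about $s_{\Y\Y^\top}$, which is the ESD of the $p\times p$ matrix, so I should work with $\bfB=\bfY\bfY^\top$ and its resolvent $(\bfB - z\bfI_p)^{-1}$, and then relate the trace to the quantity appearing on the right-hand side, which involves $\bfY_{-1}^\top\bfY_{-1}$, an $n\times n$ object.

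The key identity is the following: using $\tr(\bfB-z\bfI_p)^{-1} = \sum_{k=1}^p \big[(\bfB-z\bfI_p)^{-1}\big]_{kk}$ together with the Sherman–Morrison / Schur complement formula, one has for each row $k$
\begin{equation*}
\big[(\bfB - z\bfI_p)^{-1}\big]_{kk} = \frac{1}{-z\big(1 + \ty_k (\bfY_{(k)}^\top\bfY_{(k)} - z\bfI_n)^{-1}\ty_k^\top\big)}\,,
\end{equation*}
where $\bfY_{(k)}$ denotes $\bfY$ with the $k$th row removed. This is the matrix identity $\big[(\bfB-z\bfI)^{-1}\big]_{kk} = \big(-z - z\,\ty_k(\bfB_{(k)}-z\bfI)^{-1}\ty_k^\top\big)^{-1}$ where $\bfB_{(k)} = \sum_{\ell\neq k}\ty_\ell^\top\ty_\ell$ has the same non-zero spectrum as $\bfY_{(k)}^\top\bfY_{(k)}$, plus the push-through identity $\ty_k \bfB_{(k)}^{\flat}\ty_k^\top$-type manipulation relating the $p\times p$ form to the $n\times n$ form. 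Multiplying by $-z/p$, summing over $k$, and taking expectations gives
\begin{equation*}
-z\,\E[s_{\bfB}(z)] = \frac{1}{p}\sum_{k=1}^p \E\left[\frac{1}{1 + \ty_k (\bfY_{(k)}^\top\bfY_{(k)} - z\bfI_n)^{-1}\ty_k^\top}\right]\,.
\end{equation*}
Since the rows of $\bfY$ are i.i.d. (this is where the hypothesis enters — for $\bfR$ with $\T=\bfI$ the rows $\ty_k$ are exchangeable, indeed i.i.d., and $\bfY_{(k)}$ has the same distribution as $\bfY_{-1}$), every summand equals the $k=1$ term, and the average collapses to the claimed expression.

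The steps in order: (1) express $\tr(\bfB-z\bfI_p)^{-1}$ as a sum of diagonal resolvent entries; (2) apply the Schur-complement formula to isolate the contribution of row $k$, obtaining $\big[(\bfB-z\bfI_p)^{-1}\big]_{kk} = \big(\,\ty_k\ty_k^\top - z - \ty_k\bfY_{(k)}^\top(\bfY_{(k)}\bfY_{(k)}^\top - z\bfI)^{-1}\bfY_{(k)}\ty_k^\top\,\big)^{-1}$ and then use the resolvent push-through identity $\bfY_{(k)}^\top(\bfY_{(k)}\bfY_{(k)}^\top - z\bfI)^{-1}\bfY_{(k)} = \bfI_n + z(\bfY_{(k)}^\top\bfY_{(k)} - z\bfI_n)^{-1}$ to rewrite everything in terms of $(\bfY_{(k)}^\top\bfY_{(k)} - z\bfI_n)^{-1}$; simplifying (using $\ty_k\ty_k^\top=1$) yields the denominator $-z\big(1 + \ty_k(\bfY_{(k)}^\top\bfY_{(k)} - z\bfI_n)^{-1}\ty_k^\top\big)$; (3) take expectations and use that the $p$ summands are identically distributed because the rows are i.i.d., concluding that the average equals the single term for $k=1$ with $\bfY_{(1)} = \bfY_{-1}$. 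I would also note the minor technical point that $\Im(z)>0$ guarantees all the inverses exist and that $1 + \ty_k(\bfY_{(k)}^\top\bfY_{(k)} - z\bfI_n)^{-1}\ty_k^\top \neq 0$ (its imaginary part has a definite sign). The main obstacle — though it is more bookkeeping than genuine difficulty — is carrying out the push-through identity cleanly and tracking the factor of $-z$ so that the final denominator matches exactly; one has to be careful that the $p\times p$ Schur complement naturally produces $-z\,\ty_k(\bfB_{(k)}-z\bfI_p)^{-1}\ty_k^\top$ and that converting $\bfB_{(k)}$-resolvent quadratic forms into $\bfY_{(k)}^\top\bfY_{(k)}$-resolvent quadratic forms introduces precisely the ``$+1$'' that gives the stated formula. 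The i.i.d.-rows reduction in step (3) is exactly the remark the authors make after the lemma statement (``its proof reveals that it holds more generally for any $p\times n$ random matrix with i.i.d.\ rows'').
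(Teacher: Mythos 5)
Your proposal is correct and is essentially the paper's own proof: the Schur-complement formula for the diagonal entries of the resolvent of $\Y\Y^\top$ (which the paper simply imports as Theorem A.4 of Bai--Silverstein), followed by the push-through identity $\Y_{-1}^\top(\Y_{-1}\Y_{-1}^\top-z\bfI)^{-1}\Y_{-1}=\bfI_n+z(\Y_{-1}^\top\Y_{-1}-z\bfI_n)^{-1}$ and the i.i.d.-rows reduction of the average to the $k=1$ term. Two trivial remarks: $\sum_k \ty_k^\top\ty_k$ equals $\Y^\top\Y$, not $\Y\Y^\top$ (a notational slip that does not affect your steps), and the normalization $\ty_k\ty_k^\top=1$ is not actually needed for the cancellation in the denominator — which is precisely why the lemma holds for any matrix with i.i.d.\ rows, as the paper remarks.
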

\begin{proof}
By Theorem A.4 in \cite{bai:silverstein:2010}, we have
\begin{equation*}
s_{\Y\Y^\top}(z)= \frac{1}{p} \sum_{k=1}^p \frac{1}{\ty_k \ty_k^\top-z-\ty_k \Y_{-k}^\top (\Y_{-k}\Y_{-k}^\top-z\bfI_{p-1})^{-1} \Y_{-k} \ty_k^\top}\,,
\end{equation*}
where $\ty_k\in \R^{1\times n}$ are the rows of $\Y$ and $\Y_{-k}\in \R^{(p-1)\times n}$ is the matrix $\Y$ with the $k$th row removed. 
Since the rows of $\bfY$ are i.i.d., we obtain
\begin{equation*}
\E[s_{\Y\Y^\top}(z)] = \E \left[\frac{1}{\ty_1 \ty_1^\top-z-\ty_1 \Y_{-1}^\top (\Y_{-1}\Y_{-1}^\top-z\bfI_{p-1})^{-1} \Y_{-1} \ty_1^\top} \right]\,.
\end{equation*}
An application of the identity $\Y_{-1}^\top (\Y_{-1}\Y_{-1}^\top-z\bfI_{p-1})^{-1} \Y_{-1}= \bfI_n + z (\bfY_{-1}^\top\bfY_{-1} -z \bfI_n)^{-1}$ finishes the proof of the lemma.
\end{proof}
The following result is needed for the analysis of $W_n(z)$.
\begin{lemma}\label{le:eyy} 
Assume $p/n\to \gamma$, $p,n\to\infty$ and $\E[Y_{11} Y_{12}]=o(n^{-1})$.
Then we have for $z\in\mathbb{C}^+$
\begin{equation}
\E \big[\ty_1 \bfD(z) \ty_1^\top |\Y_{-1} \big]- \frac{1}{n} \tr \big( \bfD(z) \big)  \cas  0\,, \quad \nto\,.
\end{equation}
\end{lemma}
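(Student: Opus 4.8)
The plan is to condition on $\Y_{-1}$ and exploit that $\ty_1$ is independent of $\bfD(z)$ (since the rows of $\bfY$ are i.i.d.\ when $\T=\bfI_p$). Writing $\bfD=\bfD(z)=(d_{ij})$, which is a deterministic matrix once we condition on $\Y_{-1}$, and $Y_i := Y_{1i}$, we have
\begin{equation*}
\E\big[\ty_1 \bfD \ty_1^\top \,\big|\, \Y_{-1}\big]
= \sum_{i=1}^n d_{ii}\,\E[Y_i^2] + \sum_{\substack{i,j=1\\ i\neq j}}^n d_{ij}\,\E[Y_iY_j]
= \frac{1}{n}\tr(\bfD) + \E[Y_1Y_2]\sum_{\substack{i,j=1\\ i\neq j}}^n d_{ij},
\end{equation*}
using $\E[Y_i^2]=1/n$ and the exchangeability of $Y_1,\ldots,Y_n$ (both of which hold by definition of $\bfR$). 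Thus the quantity to be controlled is exactly $\E[Y_1Y_2]\sum_{i\neq j} d_{ij}$.

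The key estimate is then the off-diagonal sum bound. Since $\sum_{i,j=1}^n d_{ij} = \mathbf{1}_n^\top \bfD\, \mathbf{1}_n$ and $\|\bfD\|\le 1/\Im(z)$ by \eqref{eq:le1}, the estimate \eqref{est_quad_form} applied with $\bfx=\mathbf{1}_n$ gives $|\mathbf{1}_n^\top \bfD\,\mathbf{1}_n|\le n/\Im(z)$; subtracting the diagonal contribution $\sum_i d_{ii}$, which is bounded by $n/\Im(z)$ via \eqref{eq:dii}, yields $\big|\sum_{i\neq j} d_{ij}\big| \lesssim n$, with a constant depending only on $z$. Hence
\begin{equation*}
\Big| \E\big[\ty_1 \bfD(z) \ty_1^\top \,\big|\, \Y_{-1}\big] - \frac{1}{n}\tr(\bfD(z)) \Big|
= \Big| \E[Y_1Y_2] \sum_{\substack{i\neq j}} d_{ij} \Big|
\lesssim n\,\E[Y_1Y_2] = n\,\E[Y_{11}Y_{12}],
\end{equation*}
which tends to zero by the hypothesis $\E[Y_{11}Y_{12}]=o(n^{-1})$. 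Note that the left-hand side is nonrandom once we use that $\E[Y_{11}Y_{12}]$ is a deterministic number, so almost sure convergence is immediate (the bound holds for every realization of $\Y_{-1}$).

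I do not expect any serious obstacle here: the only slightly delicate point is to be careful that the conditional expectation $\E[\cdot\,|\,\Y_{-1}]$ really does reduce to a deterministic scalar multiple of $\sum_{i\neq j} d_{ij}$, which relies on the independence of $\ty_1$ from $\Y_{-1}$ and on the exchangeability of the coordinates of $\ty_1$ — both structural facts about the i.i.d.\ model with $\T=\bfI_p$. The inequality $\big|\sum_{i\neq j} d_{ij}\big|\lesssim n$ is the main quantitative input, and it follows directly from the norm bound on $\bfD(z)$; the decay then is handed to us by the assumed rate $\E[Y_{11}Y_{12}]=o(n^{-1})$, which in the application is supplied by Lemma~\ref{lem_mixed_mom_i.i.d.}.
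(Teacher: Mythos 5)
Your proposal is correct and follows essentially the same route as the paper: both compute $\E[\ty_1\bfD(z)\ty_1^\top\,|\,\Y_{-1}]$ via the independence of $\ty_1$ from $\bfD(z)$ and the moment structure $\E[Y_{1i}^2]=1/n$, $\E[Y_{1i}Y_{1j}]=\E[Y_{11}Y_{12}]$, reducing the difference to $\E[Y_{11}Y_{12}]\bigl(1_n^\top\bfD(z)1_n-\tr\bfD(z)\bigr)$, and then use $|1_n^\top\bfD(z)1_n|\le n/\Im(z)$ and $|\tr\bfD(z)|\le n/\Im(z)$ together with $\E[Y_{11}Y_{12}]=o(n^{-1})$. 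The only slip is the remark that the left-hand side is ``nonrandom'' --- it does depend on $\Y_{-1}$ --- but this is harmless since, as you note, the deterministic bound $\lesssim n\,\E[Y_{11}Y_{12}]$ holds for every realization, which is exactly why the convergence is almost sure.
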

\begin{proof}
Note that $\E \big[\ty_1 \bfD(z) \ty_1^\top |\Y_{-1} \big]= \tr\big( \bfD(z) \E[\ty_1^\top\ty_1])$ and $\E[\ty_1^\top\ty_1]= (n^{-1} -\E[Y_{11}Y_{12}]) \bfI_n +\E[Y_{11}Y_{12}] 1_n 1_n^\top$ with $1_n=(1,\ldots,1)^\top\in \R^n$.
Hence, we have
\begin{equation*}
\E \big[\ty_1 \bfD(z) \ty_1^\top |\Y_{-1} \big]- \frac{1}{n} \tr \big( \bfD(z) \big)= -\E[Y_{11}Y_{12}] \tr \big( \bfD(z) \big) + \E[Y_{11}Y_{12}] \, 1_n^\top \bfD(z) 1_n\,.
\end{equation*}
Because of $|\tr \big( \bfD(z) \big)| \le n / \Im(z)$ (see \eqref{eq:le1}) and $\E[Y_{11}Y_{12}]=o(n^{-1})$, it remains to prove that 
\begin{equation}\label{eq:dgerts}
\E[Y_{11}Y_{12}] \, 1_n^\top \bfD(z) 1_n \cas 0\,, \quad \nto.
\end{equation}
To this end, we have in view of \eqref{est_quad_form} and \eqref{eq:le1} that
$1_n^\top \bfD(z) 1_n \leq n  \norm{\bfD(z)} \leq n/\Im(z)$,
which implies \eqref{eq:dgerts}.

\end{proof}

	\section{Proof of Theorem \ref{thm}} \label{sec_proof_dep}
		
	Our main result for the dependent case immediately follows from the two subsequent lemmas, which are proven in Section \ref{sec_proof_lem1} and \ref{sec_proof_lem2}, respectively. The strategy for these proofs was discussed in Section \ref{sec_strat_proof}. 
	Throughout this section, we work in the setting of Section \ref{sec_dependent} and, in particular, under the assumptions of Theorem \ref{thm} if not explicitly stated otherwise.

	\begin{lemma}\label{lem_step1}
	 	For all $z\in\mathbb{C}^+$, we have almost surely
	 	\begin{align*}
	 		\lim\limits_{n\to\infty} \lb s_n(z) - \E [ s_n(z) ] \rb  =0.
	 	\end{align*}
	\end{lemma}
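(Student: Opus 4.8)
The plan is to prove the almost sure convergence $s_n(z) - \E[s_n(z)] \to 0$ via a martingale difference decomposition in the columns $\bfy_1, \ldots, \bfy_n$ of $\bfY$, combined with the Burkholder/Azuma inequality and the Borel--Cantelli lemma. Recall $\bfR = \bfY\bfY^\top = \sum_{j=1}^n \bfy_j \bfy_j^\top$ and write $\D\inv(z) = (\bfR - z\bfI)^{-1}$, so that $s_n(z) = \tfrac1p \tr \D\inv(z)$. Let $\E_j$ denote conditional expectation given $\bfx_1, \ldots, \bfx_j$ (equivalently, given the first $j$ columns of $\bfX$, hence of $\bfY$ up to the column-joint normalization), with $\E_0 = \E$. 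Then
\begin{align*}
s_n(z) - \E[s_n(z)] = \tfrac1p \sum_{j=1}^n (\E_j - \E_{j-1}) \tr \D\inv(z) =: \sum_{j=1}^n \gamma_{n,j}(z),
\end{align*}
which is a sum of martingale differences with respect to the filtration generated by the columns.

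The first technical issue, already flagged in Section~\ref{sec_strat_proof}, is that unlike the sample covariance case the matrix $\D_j\inv(z) := (\bfR - \bfy_j \bfy_j^\top - z\bfI)^{-1}$ still depends on $\bfy_j$, because the normalization $\{\diag(\bfS_n)\}^{-1/2}$ couples all columns. Hence $(\E_j - \E_{j-1})\tr\D_j\inv(z)$ need not vanish and the usual trick of subtracting $\tr\D_j\inv(z)$ to obtain a bounded summand does not directly apply. The remedy is to introduce a surrogate resolvent $\check{\D}_j\inv(z)$ which is measurable with respect to the columns other than $\bfy_j$ (built, for instance, from $\bfX$ with the $j$th column deleted and the diagonal normalization recomputed from the remaining $n-1$ columns, or from a leave-one-out version of $\bfY$), show that $\gamma_{n,j}(z) = \tfrac1p(\E_j - \E_{j-1})[\tr\D\inv(z) - \tr\check{\D}_j\inv(z)]$, and control the difference $\tr\D\inv(z) - \tr\check{\D}_j\inv(z)$. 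For this one uses the rank-one / low-rank perturbation bound $|\tr(\mathbf{M}_1 - z\bfI)^{-1} - \tr(\mathbf{M}_2 - z\bfI)^{-1}| \le \operatorname{rank}(\mathbf{M}_1-\mathbf{M}_2)/\Im(z)$ (Lemma~6.9 of \cite{bai:silverstein:2010}, already cited in the excerpt) together with the fact that passing from the $n$-column normalization to the $(n-1)$-column one perturbs the diagonal scaling in a way that changes $\bfR$ by a bounded-rank (or small-norm times bounded-rank) matrix; the self-normalization $\norm{\ty_k} = 1$ and $\norm{\D\inv(z)}, \norm{\check\D_j\inv(z)} \le 1/\Im(z)$ keep everything uniformly bounded. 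This yields a deterministic bound $|\gamma_{n,j}(z)| \le C(z)/p$ uniformly in $j$.

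With the martingale differences bounded by $C(z)/p$, Azuma--Hoeffding (or Burkholder's inequality for a higher even moment $q$) gives, for every $\vep > 0$,
\begin{align*}
\P\big( |s_n(z) - \E[s_n(z)]| > \vep \big) \le 2\exp\!\big(-c\, \vep^2 p^2 / (n \cdot C(z)^2 )\big) \lesssim \exp(-c'(z)\vep^2 n),
\end{align*}
using $p/n \to \gamma \in (0,\infty)$; alternatively Burkholder gives $\E|s_n(z)-\E[s_n(z)]|^q \lesssim n^{q/2}(C(z)/p)^q \lesssim n^{-q/2}$, and choosing $q > 2$ makes this summable. Either way $\sum_n \P(|s_n(z) - \E[s_n(z)]| > \vep) < \infty$, so Borel--Cantelli yields the almost sure convergence for each fixed $z \in \mathbb{C}^+$. (If one wants the single null set to work simultaneously for all $z$, one intersects over a countable dense subset of $\mathbb{C}^+$ and invokes the Lipschitz/normal-families continuity of Stieltjes transforms in $z$ on compact subsets of $\mathbb{C}^+$.)

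The main obstacle is squarely the construction and estimation of the leave-one-out surrogate $\check\D_j\inv(z)$: one must show that recomputing the self-normalization without column $j$ changes the correlation matrix $\bfR$ by something of bounded rank (or controllably small operator norm on a bounded-rank range), so that the perturbation inequality applies with a constant independent of $n$ and $j$. Everything after that — Azuma/Burkholder plus Borel--Cantelli — is routine. This step is precisely the ``more suitable approximation of the resolvent $\D\inv(z)$ independent of $\bfy_j$'' referred to in Section~\ref{sec_strat_proof}, and its properties (norm bounds, boundedness of relevant quadratic forms via $\norm{\bfy_j}$ control) are what the bulk of the proof of Lemma~\ref{lem_step1} will establish.
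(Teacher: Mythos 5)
Your overall skeleton (martingale decomposition in the columns, a leave-one-out surrogate resolvent with the diagonal normalization recomputed from the other $n-1$ columns, then Azuma/Burkholder and Borel--Cantelli) is the same as the paper's, and your surrogate is essentially the paper's $\hat{\D}_j(z)$ built from $\bfY^{(j)}=n^{-1/2}\bfM^{(j)}\X$. However, there is a genuine gap at the step you yourself identify as the main obstacle: you claim that recomputing the self-normalization without column $j$ changes $\bfR$ by a bounded-rank (or small-norm times bounded-rank) matrix, so that the low-rank perturbation inequality gives a \emph{deterministic} bound $|\gamma_{n,j}(z)|\le C(z)/p$. This is false. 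Replacing $\bfM$ by $\bfM^{(j)}$ alters all $p$ diagonal entries of the normalization, so $\D(z)$ and $\tilde{\D}_j(z)=\bfY^{(j)}(\bfY^{(j)})^\top-z\bfI$ differ by a perturbation of full rank; the relevant diagonal matrix is $\bfI-\bfM^2(\bfM^{(j)})^{-2}=\diag(Y_{1j}^2,\ldots,Y_{pj}^2)$, and the best one gets is the stochastic bound $|\tr\D^{-1}(z)-\tr\tilde{\D}_j^{-1}(z)|\lesssim\sum_{i=1}^p Y_{ij}^2$, a quantity whose expectation is of order $p/n\to\gamma$ and which deterministically can be as large as $p$ (e.g.\ when the $j$th observation dominates each row). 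Hence no uniform $C(z)/p$ bound on the martingale differences is available, and the Azuma step as you set it up does not go through for this part of the error.

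The paper circumvents this by splitting the martingale difference into two pieces: $T_{1,n}$, coming from the genuine rank-one difference $\tilde{\D}_j(z)-\hat{\D}_j(z)=\mathbf{y}_j^{(j)}(\mathbf{y}_j^{(j)})^\top$, for which Lemma~2.6 of \cite{silverstein:bai:1995} does give the deterministic bound $2/(pv)$ and Azuma applies exactly as you propose; and $T_{2,n}$, coming from the change of normalization $\D^{-1}(z)-\tilde{\D}_j^{-1}(z)$, which is controlled only in mean square via
\begin{equation}
\E\Big|\tr\big(\D^{-1}(z)-\tilde{\D}_j^{-1}(z)\big)\Big|^2\lesssim \sum_{i,k=1}^p\E[Y_{ij}^2Y_{kj}^2]\lesssim 1,
\end{equation}
where the last estimate is Lemma~\ref{lem_mixed_second_mom} and rests on the nontrivial moment analysis of the self-normalized entries (the Laplace-transform fourth-moment bound \eqref{4th_mom} together with the sparsity assumption \ref{a_sparse_U}). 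One then needs Burkholder's inequality with some $q>2$ to get $\E|T_{2,n}|^q\lesssim n^{q/2}p^{-q}=O(n^{-q/2})$ and Borel--Cantelli. So the conclusion is correct and your plan is structurally right, but the decisive quantitative input — that the normalization-change term is only $O(1)$ in $L^2$, proved through the mixed-moment estimates rather than through any rank or norm argument — is missing from your proposal, and your claimed deterministic bound would be wrong as stated.
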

	
	\begin{lemma} \label{lem_step2}
		For $z\in\mathbb{C}^+$, we have
		\begin{align*}
			\lim\limits_{n\to\infty} \E [ s_n(z) ] = s(z),
		\end{align*}
		where $s(z)$ is the unique solution to \eqref{eq_s}. 
	\end{lemma}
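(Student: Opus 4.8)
The plan is to run the classical Silverstein resolvent recursion for $\E[s_n(z)]$; the genuinely new point is that the two standard reductions---replacing $\D\inv(z)$ by a rank-one-deleted resolvent that is moreover independent of the column $\bfy_j$, and then evaluating a quadratic form in $\bfy_j$---must be carried out despite the self-normalization, which is exactly what Lemmas~\ref{lem_Dhat} and~\ref{lem_quad_form} deliver. First I would set up the basic identity. With $\bfR=\sum_{j=1}^n\bfy_j\bfy_j^\top$, $\D(z)=\bfR-z\bfI$ and $\D_j(z)=\bfR-\bfy_j\bfy_j^\top-z\bfI$, the Sherman--Morrison formula gives $\bfy_j^\top\D\inv(z)\bfy_j=\bfy_j^\top\D_j\inv(z)\bfy_j/(1+\bfy_j^\top\D_j\inv(z)\bfy_j)$, and summing over $j$ together with $\tr(\bfR\D\inv(z))=p+zp\,s_n(z)$ yields
\[
-z\,s_n(z)=\frac{p-n}{p}+\frac1p\sum_{j=1}^n\frac{1}{1+\bfy_j^\top\D_j\inv(z)\bfy_j}.
\]
The columns $\bfy_1,\dots,\bfy_n$ are exchangeable (the denominators $\{\diag(\bfS)\}^{-1/2}$ are a symmetric function of $\bfx_1,\dots,\bfx_n$) and $|1+\bfy_j^\top\D_j\inv(z)\bfy_j|\inv\le|z|/\Im(z)$ by a standard bound, so taking expectations gives the exact relation $-z\,\E[s_n(z)]=\tfrac{p-n}{p}+\tfrac np\,\E\big[(1+\bfy_1^\top\D_1\inv(z)\bfy_1)\inv\big]$, and the task reduces to understanding the single quadratic form $\bfy_1^\top\D_1\inv(z)\bfy_1$.

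This reduction is the heart of the argument. The obstruction is that $\D_1\inv(z)$ still depends on $\bfy_1$: even though $\bfR-\bfy_1\bfy_1^\top=\sum_{i\ge2}\bfy_i\bfy_i^\top$ drops the first column, every $\bfy_i$ inherits a dependence on the first column $\bfx_1$ of $\X$ through the common normalizing denominators. Lemma~\ref{lem_Dhat} is used to replace $\D_1\inv(z)$ by a matrix $\wh{\D}_1\inv(z)$ that is independent of $\bfy_1$, with an $o(1)$ error in the relevant (normalized-trace and quadratic-form) quantities. Then Lemma~\ref{lem_quad_form}, applied with $\mathbf A=\bfI$---and this is where assumption~\ref{a_sparse_U} together with the moment estimates~\eqref{first_mom} and~\eqref{4th_mom} for the triangular array $Y_{k1}=Y_{k1}^{(n)}$ are indispensable---gives $\bfy_1^\top\wh{\D}_1\inv(z)\bfy_1-n\inv\tr(\T\wh{\D}_1\inv(z))\cip 0$. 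Combining this with the rank-one bound $|\tr(\T\D_1\inv(z))-\tr(\T\D\inv(z))|\le\|\T\|/\Im(z)$ and $\sup_n\|\T_n\|<\infty$ (Remark~\ref{rem_ass}), I would obtain
\[
\bfy_1^\top\D_1\inv(z)\bfy_1=\beta_n(z)+o_{\P}(1),\qquad \beta_n(z):=\tfrac1n\tr(\T\D\inv(z)),
\]
uniformly enough to pass to expectations. I expect this step, and the uniform-in-$(k,n)$ control of the moments of $Y_{k1}$ behind it, to be the main obstacle; it is precisely the place where the finite-fourth-moment hypothesis of the classical theory is dispensed with.

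It remains to close the system. Substituting the last display, and using that $\beta_n(z)$ concentrates around $\E[\beta_n(z)]$ by an argument analogous to that of Lemma~\ref{lem_step1}, one gets
\[
-z\,\E[s_n(z)]=\frac{p-n}{p}+\frac np\cdot\frac{1}{1+\E[\beta_n(z)]}+o(1).
\]
A second relation for $\beta_n(z)$ comes from the matrix resolvent identity for $\D\inv(z)$ run against $\T$ in the eigenbasis of $\T$ (the standard deterministic-equivalent/diagonal-entry argument, now legitimate thanks to Lemmas~\ref{lem_Dhat} and~\ref{lem_quad_form}): it yields $\D\inv(z)\approx(\tfrac{\T}{1+\beta_n(z)}-z\bfI)\inv$ in normalized trace, hence $\beta_n(z)=\tfrac1n\tr\,\T(\tfrac{\T}{1+\beta_n(z)}-z\bfI)\inv+o_{\P}(1)$ and $s_n(z)=\int(\tfrac{\lambda}{1+\beta_n(z)}-z)\inv\,dF^{\T_n}(\lambda)+o_{\P}(1)$. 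Since $|s_n(z)|\le1/\Im(z)$, the family $\{\E[s_n(\cdot)]\}_n$ is normal on $\C^+$; along any subsequence with $\E[s_n(z)]\to s(z)$ and $\E[\beta_n(z)]\to\beta(z)$ I would let $n\to\infty$ using $p/n\to\gamma$ and $F^{\T_n}\to H$, obtaining $s(z)=\int(\tfrac{\lambda}{1+\beta(z)}-z)\inv dH(\lambda)$ and $\beta(z)=\gamma\int\lambda(\tfrac{\lambda}{1+\beta(z)}-z)\inv dH(\lambda)$. Writing $c(z)=1/(1+\beta(z))$ and eliminating $\beta$ between the two identities (the second gives $\beta(z)=\tfrac\gamma{c(z)}(1+zs(z))$, whence $c(z)=1-\gamma-\gamma z s(z)$) turns the first into exactly equation~\eqref{eq_s}. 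Because \eqref{eq_s} has a unique solution among Stieltjes transforms of probability measures (classical, cf.~\cite{bai:silverstein:2010}) and every subsequential limit of $\E[s_n(\cdot)]$ is such a transform, all subsequential limits agree; therefore $\E[s_n(z)]\to s(z)$ for every $z\in\C^+$, which is the assertion of Lemma~\ref{lem_step2}.
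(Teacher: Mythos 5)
Your proposal is correct and follows essentially the same route as the paper: the decisive ingredients — replacing $\D_j^{-1}(z)$ by the $\bfy_j$-independent $\hat{\D}_j^{-1}(z)$ via Lemma~\ref{lem_Dhat}, the quadratic-form concentration of Lemma~\ref{lem_quad_form} (resting on \eqref{first_mom}, \eqref{4th_mom} and \ref{a_sparse_U}), the reuse of the Lemma~\ref{lem_step1}-type concentration for the trace functional, the same algebraic elimination leading to \eqref{eq_s}, and the Bolzano--Weierstrass/uniqueness closing step — are exactly those of the paper's proof. The only difference is organizational: you extract the first self-consistent equation from the Sherman--Morrison trace identity together with column exchangeability, whereas the paper obtains both equations simultaneously from the deterministic-equivalent identity with $\K(z)=b(z)\T$ traced against $\T_n^l$, $l\in\{0,1\}$.
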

	
	In order to prove Lemma \ref{lem_step1} and Lemma \ref{lem_step2}, we need some preparation. 
For $0 \leq j \leq n$, let $\E_j$ denote the conditional expectation with respect to $\tilde{\bfx}_1, \ldots, \tilde{\bfx}_j$. 
	Then, $\E_0 [Z] = \E[Z]$ and $\E_n[ Z ] =  Z $ for some random variable Z which is measurable with respect to the $\sigma$-field generated by $\tilde{\bfx}_1, \ldots, \tilde{\bfx}_n$.
	
For $j\in\{1, \ldots, n\}$ and $z\in\mathbb{C}^+$ recall that $\mathbf{y}_j = (Y_{1j}, \ldots, Y_{pj} ) ^\top$,  $\mathbf{x}_j = (X_{1j}, \ldots, X_{pj} ) ^\top$ and define
		 \begin{align*}
		 	\D(z) &=  \bfR - z \mathbf{I} , \qquad
		 	\D_j(z) =  \D(z) - \mathbf{y}_j \mathbf{y}_j^\top ,
			\qquad \mathbf{S}^{(j)} =   \bfS - n^{-1} \bfx_j \bfx_j^\top ,\\
		 	\beta_j(z) &=  \frac{1}{1 + \bfy_j^\top \D_j^{-1}(z) \bfy_j } ,
		 ~~\qquad\qquad	\overline{\beta}_j(z) =  \frac{1}{1 + n\invv \tr \T \D_j^{-1}(z)  } ,\\
	b(z) &=  \frac{1}{1 +n\invv \tr \E[ \T \D^{-1}(z) ] }, 
		 \qquad	\K (z) =  b(z) \T . 
		 \end{align*}
		 Let $j\in\{1, \ldots, n\}$. Note that $\D_j(z)$ is not independent of $\mathbf{y}_j$. For later considerations, we aim to define an appropriate approximate $\hat{\D}_j(z)$ which is independent of $\mathbf{y}_j$. For this purpose, define the diagonal matrices $\mathbf{M}^{(j)}$ and $\mathbf{M}$ with entries 
		\begin{align*}			 
		 M^{(j)}_{ii} = &   \Bigg( \frac{1}{n} \sum_{k=1; k\neq j}^n X_{ik}^2 \Bigg)^{-1/2} \quad \text{ and } \quad
		 M_{ii} =    \left( \frac{1}{n} \sum\limits_{k=1}^n X_{ik}^2\right)^{-1/2}\,, \quad ~1 \leq i \leq p, 
		 \end{align*}
		respectively, and set 
		 $ \mathbf{Y}^{(j)} = ( \mathbf{y}^{(j)}_1, \ldots, \mathbf{y}^{(j)}_n) 
		 = n^{-1/2} \mathbf{M}^{(j)} \X$. 
		 Note that we also have 
		 $ \mathbf{Y} = ( \mathbf{y}_1, \ldots, \mathbf{y}_n) 
		 = n^{-1/2} \mathbf{M} \X$.
		 Then, the matrix 
		 $$ \hat{\D}_j (z) =
	 \Y^{(j)} \lb \Y^{(j)}\rb ^\top - \mathbf{y}^{(j)}_j \lb \mathbf{y}^{(j)}_j \rb^\top - z \mathbf{I}$$
		 is independent of $\mathbf{y}_j$. 
		We also set
$\tilde{\D}_j (z) = \mathbf{Y}^{(j)} \lb \mathbf{Y}^{(j)}  \rb^\top - z\mathbf{I}$.

	\subsection{Proof of Lemma \ref{lem_step1}} \label{sec_proof_lem1}

	Noting that $(\E_j - \E_{j-1}) \tr \hat{\D}_j\invv= 0$, $1 \leq j \leq n$,
	we decompose 
	\begin{align*}
	 s_n(z) - \E [ s_n(z) ] 
	& = \frac{1}{p}  \lb \tr  \D\invv(z) - \E [ \tr \D\invv(z) ] \rb \\
	& = \frac{1}{p} \sum\limits_{j=1}^n ( \E_j - \E_{j - 1}) \tr \D\invv(z)
	= T_{1,n} + T_{2,n},
	\end{align*}
	where 
	\begin{align*}
		T_{1,n} =& \frac{1}{p} \sum\limits_{j=1}^n ( \E_j - \E_{j - 1}) \tr \lb \tilde{\D}_j \invv(z) - \hat{\D}_j\invv(z) \rb , \\
		T_{2,n} = & \frac{1}{p} \sum\limits_{j=1}^n ( \E_j - \E_{j - 1}) \tr \lb  \D\invv(z) - \tilde{\D}_j\invv(z) \rb.
	\end{align*}
	First, consider the random variable $T_{1,n}$ and use Lemma 2.6 in \cite{silverstein:bai:1995} to obtain
	\begin{align*}
		\frac{1}{p}\left| ( \E_j - \E_{j - 1}) \tr \lb \tilde{\D}_j \invv(z) - \hat{\D}_j\invv(z) \rb \right| \leq \frac{2}{p v},
	\end{align*}
	where $v= \operatorname{Im}(z)>0$. 
	Similarly as in the proof of Lemma 6 in \cite{elkaroui:2009}, we conclude for $\varepsilon >0$ by invoking Azuma's inequality for real and imaginary parts that
	\begin{align*}
	\mathbb{P} \lb | T_{1,n} | > \varepsilon \rb  
	\leq 4 \exp \lb - \frac{\varepsilon^2 p^2 v^2}{16 n} \rb .
	\end{align*}
	By the Borel-Cantelli lemma, this implies the almost sure convergence
$T_{1,n} \cas 0$ for $n\to\infty$.
\medskip
	
		Investigating the random variable $T_{2,n}$ further, we write 
		\begin{align*}
			\D\inv(z) - \tilde{\D}_j\inv (z) 
			= \lb \mathbf{MSM} - z\mathbf{I} \rb\inv 
			- \lb \mathbf{M}^{(j)} \rb \inv \mathbf{M} \lb \mathbf{MSM} - z \mathbf{M}^2 \lb  \bfM^{(j)}  \rb^{-2} \rb \inv \mathbf{M} \lb \mathbf{M}^{(j)} \rb\inv .
		\end{align*}
and decompose 
	 	$\tr \lb \D\inv(z) - \tilde{\D}_j\inv (z)  \rb 
	 	= Q_{j,1} + Q_{j,2}$,
	 where
	 \begin{align*}
	 	Q_{j,1} = & \tr \lb \lb \mathbf{MSM} - z\mathbf{I} \rb\inv 
			- \lb \mathbf{MSM} - z \mathbf{M}^2 \lb \mathbf{M}^{(j)} \rb^{-2} \rb \inv   \rb \\
			= & z \tr \lb 
			\lb \mathbf{MSM} - z \mathbf{M}^2 \lb \mathbf{M}^{(j)} \rb^{-2} \rb\inv
			 \lb \mathbf{I} -  \mathbf{M}^2 \lb \mathbf{M}^{(j)} \rb^{-2}  \rb 
			\lb \mathbf{MSM} - z\mathbf{I} \rb\inv 
			\rb 
			, \\
			Q_{j,2} =& \tr \lb 
			  \lb \mathbf{MSM} - z \mathbf{M}^2 \lb  \mathbf{M}^{(j)}  \rb^{-2} \rb \inv  \lb \mathbf{I} - \mathbf{M}^2 \lb \mathbf{M}^{(j)} \rb ^{-2} \rb 
			 \rb.
	 \end{align*}
	 Using that the diagonal matrix $\mathbf{I} - \mathbf{M}^2 \lb \mathbf{M}^{(j)} \rb^{-2} $ is nonnegative definite, since 
	 \begin{align*}
	 	\lb \mathbf{M}^2 \lb \mathbf{M}^{(j)} \rb^{-2} \rb_{ii} 
	 	= 1 - \frac{X_{ij}^2 }{\sum_{k=1}^n X_{ik}^2}
	 	= 1 - Y_{ij}^2, ~\quad 1 \leq i \leq p,
	 \end{align*}
	we estimate
	\begin{align*}
		| Q_{j,1}  | \lesssim \tr \lb \mathbf{I} - \mathbf{M}^2 \lb \mathbf{M}^{(j)} \rb ^{-2} \rb = \sum\limits_{i=1}^p Y_{ij}^2 
\end{align*}		
	and a similar bound can be shown for the term $Q_{j,2}$. 
Therefore, we deduce that 
	\begin{align} \label{mom_bound}
		\E \left| \tr \lb \D\inv(z) - \tilde{\D}_j\inv (z)  \rb \right|^2 \lesssim  \E\Big|\sum_{i=1}^p Y_{ij}^2\Big|^2
		= \sum_{i,k=1}^p \E [Y_{kj}^2 Y_{ij}^2 ]
		\lesssim 1\,,
	\end{align}
	where the last estimate holds due to Lemma \ref{lem_mixed_second_mom}. 
By an application of Burkholder's inequality for some $q>2$, this implies
	\begin{align*}
	\E \left| T_{2,n} \right|^q \lesssim p^{-q} \E \lb \sum\limits_{j=1}^n \left| (\E_j - \E_{j-1} ) \tr \lb \D\inv(z) - \tilde{\D}_j\inv (z)  \rb \right|^2 \rb^{\frac{q}{2}} 
	\lesssim n^{\frac{q}{2} } p^{-q} = o(1). 
	\end{align*}
Again by Borel-Cantelli, we get $T_{2,n}\cas 0$ as $\nto$. 	Eventually, the assertion of Lemma \ref{lem_step1} follows. 
	
	\subsection{Proof of Lemma \ref{lem_step2}} \label{sec_proof_lem2}
	
		The fact that the solution of the equation \eqref{eq_s} is uniquely determined follows from \cite{bai:zhou:2008}. \\ 
We see that
	\begin{align} \label{a1}
		\D(z) - \lb \K(z) - z\mathbf{I} \rb = \sum\limits_{j=1}^n \bfy_j \bfy_j^\top - \K(z).
	\end{align}
	Note that, by using formula (6.1.11) in \cite{bai:silverstein:2010}, we have for $1 \leq j \leq n$
	\begin{align} \label{a51}
		\bfy_j^\top \D^{-1}(z) 
		= \beta_{j}(z) \bfy_j^\top \D_{j}^{-1}(z)  .
	\end{align}
	We multiply equation \eqref{a1} with $( \mathbf{K}(z) - z \mathbf{I})\invv$ from the left and $\D\invv(z)$ from the right and use \eqref{a51} to obtain
	\begin{align*}
	& \lb \mathbf{K}(z) - z \mathbf{I} \rb \invv - \D\invv(z) \\
	= &  \sum\limits_{j=1}^{n} \lb \mathbf{K}(z) - z \mathbf{I}\rb\invv \rd_j \rd_j^\top \D\invv(z) 
	-  \lb \mathbf{K}(z) - z \mathbf{I}\rb\invv \mathbf{K}(z) \D\invv(z) \\
	= & \sum\limits_{j=1}^{n} \beta_{j}(z) \lb \mathbf{K}(z) - z \mathbf{I}\rb\invv \rd_j \rd_j^\top \D_{j}\invv(z)
	- \lb \mathbf{K}(z) - z \mathbf{I}\rb\invv \mathbf{K}(z) \D\invv(z). \\
	\end{align*}
	This implies for $l\in\{0,1\}$ 
	\begin{align*}
		& \T_n^l \lb \mathbf{K}(z) - z \mathbf{I} \rb \invv - \T_n^l \D\invv(z) \\
		=& \sum\limits_{j=1}^{n} \beta_{j}(z) \T_n^l \lb   \mathbf{K}(z) - z\mathbf{I} \rb\invv \rd_j \rd_j^\top \D_{j}\invv(z) - \T_n^l \lb \mathbf{K}(z) - z\mathbf{I} \rb \invv \mathbf{K}(z) \D\invv(z).
	\end{align*}
	Taking traces and dividing by $p$, we conclude
	\begin{align*}
		& \frac{1}{p} \tr  \T_n^l \lb  \mathbf{K}(z) - z \mathbf{I} \rb \invv 
		- \frac{1}{p} \tr \T_n^l \D\invv(z) \\
		= & \frac{1}{p}\sum\limits_{j=1}^{n} \beta_{j}(z) \rd_j^\top \D_{j}\invv(z) \T_n^l \lb \mathbf{K}(z) - z\mathbf{I} \rb \invv \rd_j 
		 - \frac{1}{p} \tr \T_n^l \lb \mathbf{K}(z) - z\mathbf{I} \rb\invv \mathbf{K}(z) \D\invv(z) \\
		 = & \frac{1}{p} \sum\limits_{j=1}^{n} \beta_{j}(z) \varepsilon_j,
	\end{align*}
	where 
	\begin{align*}
		\varepsilon_j
		= & \rd_j^\top \D_{j}\invv(z) \T_n^l \lb \mathbf{K}(z) - z\mathbf{I} \rb\invv \rd_j 
		 - n\invv \beta_{j}\invv(z) \tr \T_n^l \lb \mathbf{K}(z) - z\mathbf{I} \rb \invv \mathbf{K}(z) \D\invv(z) 
		\\
		= &   \rd_j^\top \D_{j}\invv(z) \T_n^l \lb \mathbf{K}(z) - z\mathbf{I} \rb\invv \rd_j
		 - n\invv \tr \T_n^l \lb \mathbf{K}(z) - z\mathbf{I} \rb \invv \mathbf{K}(z) \D\invv(z) 
		 \lb 1 + \rd_j^\top \D_{j}\invv(z) \rd_j \rb.
	\end{align*}
	We decompose $\varepsilon_j = \varepsilon_{j1} + \varepsilon_{j2} + \varepsilon_{j3}$, where
	\begin{align*}
		\varepsilon_{j1} = &
		n\invv \tr \T_n^{l+1} \lb  \mathbf{K}(z) - z \mathbf{I} \rb \invv \D_{j}\invv (z) 
		- n\invv \tr \T_n^{l+1} \lb  \mathbf{K}(z) - z \mathbf{I} \rb\invv \D\invv (z) \\
		\varepsilon_{j2} = &
		\rd_j^\top \D_{j}\invv(z) \T_n^l \lb  \mathbf{K}(z) - z \mathbf{I} \rb\invv \rd_j
		- n\invv \tr \T_n^{l+1} \lb  \mathbf{K}(z) - z \mathbf{I} \rb\invv \D_{j}\invv(z) \\
		\varepsilon_{j3} = & 
		 - n\invv \tr \T_n^l \lb \mathbf{K}(z) - z\mathbf{I} \rb \invv \mathbf{K}(z) \D\invv(z) 
		 \lb 1 + \rd_j^\top \D_{j}\invv(z) \rd_j \rb 
		 +n\invv \tr \T_n^{l+1} \lb  \mathbf{K}(z) - z \mathbf{I} \rb\invv \D\invv (z) \\
		 = & - n\invv \tr \T_n^{l+1} \lb \mathbf{K}(z) - z \mathbf{I} \rb\invv \D\invv(z) 
		 \left\{ b(z)\lb \rd_j^\top \D_{j}\invv(z) \rd_j + 1\rb- 1 \right\},
	\end{align*}
	using that $\T_n$ and $( \mathbf{K}(z) - z\mathbf{I} ) \invv$ commute.	
	
	In the following, we will show that for $n\to\infty$
	\begin{align} \label{epsilon_conv}
	\E \left[ \frac{1}{p} \sum\limits_{j=1}^{n} \beta_{j}(z) \varepsilon_{j,r} \right]
	\to 0, ~\quad r\in\{1,2,3\}.  
\end{align}		
	 Similar arguments as given by \cite{bai:silverstein:2010} for their estimate (9.9.13)
yield
	$$ \| (\K - z \mathbf{I} )\invv \| \lesssim 1.$$ 
 For the term $\varepsilon_{j2}$, we substitute the matrix $\D_j\inv(z)$ by $\hat{\D}_j\inv(z)$ resulting in an asymptotically negligible error by Lemma \ref{lem_Dhat}. To be precise, we have for any $p\times p$ matrix  $\mathbf{A}$ independent of $\rd_j$ with bounded spectral norm
	\begin{align} \label{substitute1}
		\E | \rd_j^\top \D_j\inv(z) \mathbf{A} \rd_j - \rd_j^\top \hat{\D}_j\inv(z) \mathbf{A} \rd_j | 
		\lesssim \lb \E \|\D_j\inv(z) -\hat{\D}_j\inv(z) \|^2 
		\E (\rd_j^\top \rd_j )^2 
		\rb\sq =o(1) ,
	\end{align}
	and 
	\begin{align}
		n\inv \E \left| \tr \lb \D_j\inv(z) - \hat{\D}_j\inv(z) \rb  \mathbf{A} \right|
		\lesssim \| \D_j\inv(z) - \hat{\D}_j\inv(z) \| = o(1), \label{subsitute2}
	\end{align}
	where we used Lemma \ref{lem_mixed_second_mom} and Lemma \ref{lem_Dhat}. 
	Note that the matrix $\hat{\D}_j\inv(z)$ is independent of $\mathbf{y}_j$. 
	Consequently, we obtain the desired result for $\varepsilon_{j2}$ from Lemma \ref{lem_quad_form}. 
	Regarding $\varepsilon_{j1}$, we proceed similarly as in \cite{bai:zhou:2008} and apply Lemma 2.6 of \cite{silverstein:bai:1995}. 
	For $\varepsilon_{j3},$ we note that 
	\begin{align*}
		| \varepsilon_{j3} |
		&=  \left| n\invv \tr \T_n^{l+1} \lb  \mathbf{K} - z \mathbf{I} \rb\invv \D\invv(z) 
		 \left\{ b(z)\lb \rd_j^\top \D_{j}\invv(z) \rd_j 
		 - \E \left[ n\invv \tr \T_n \D\invv(z) \right] \rb \right\} \right| \\
		& \lesssim   | \varepsilon_{j31} | + | \varepsilon_{j32} | + | \varepsilon_{j33} |  ,
	\end{align*}
	where 
	\begin{align*}
		\varepsilon_{j31} &=  \rd_j^\top \D_{j}\invv(z) \rd_j - n\invv \tr \T_n \D_{j}\invv(z), \\
		\varepsilon_{j32} &=  n\invv \tr \T_n \D_{j}\invv(z) - n\invv \tr \T_n \D\invv(z)
		= n\invv \beta_j(z) \rd_j^\top \D_j\invv (z) \T_n \D_j\invv (z) \rd_j
		, \\
		\varepsilon_{j33} &=  n\invv \tr \T_n \D\invv(z) - \E \left[ n\invv \tr \T_n \D\invv(z) \right] .
	\end{align*}
	The term $\varepsilon_{j31}$ can be treated similarly to $\varepsilon_{j2}$
	and $\varepsilon_{j33}$ similarly to Lemma \ref{lem_step1}. 
	For the remaining term $\varepsilon_{j32}$, we obtain
	\begin{align*}
		\E | \varepsilon_{j32} | 
		\lesssim n\inv \E [ \rd_j^\top \rd_j ] = n\inv \sum\limits_{i=1}^p \E [ Y_{ij}^2]  =\frac{p}{n^2}= o(1) .
	\end{align*}
	Thus, the convergence in \eqref{epsilon_conv} holds true, which implies for $l \in \{0,1\}$
	\begin{align} \label{conv}
	\frac{1}{p} \lb \E \tr  \T_n^l \lb \mathbf{K}(z) - z \mathbf{I} \rb \invv 
		- \E \tr \T_n^l \D\invv(z) \rb \to 0, ~\quad n\to\infty.
	\end{align}
	Using \eqref{conv} with $l=0$ and $l=1$, we have for $n\to\infty$,
	\begin{align} 
		& \frac{1}{p} \E \tr \lb \frac{\T_n}{1 + \gamma_n a_n(z) } - z\mathbf{I}\rb\invv  - \E[s_n(z)] \to 0, \label{conv1} \\
		& \frac{1}{p} \E \tr \T_n \lb \frac{\T_n }{1 + \gamma_n a_n(z) } - z \mathbf{I}\rb\invv  - a_n(z) \to 0,
		\label{conv2}
	\end{align}
	where $a_n(z) = p\invv \E [ \tr \T_n \D_j\invv(z)  ]$ and $\gamma_n=p/n$. 
Using 
$|1/(1 + \gamma_n a_n(z)) | \lesssim 1$,
	we conclude from \eqref{conv2} that
	\begin{align*}
		1 + \frac{z}{p} \E \tr \lb \frac{  \T_n}{1 + \gamma_n a_{n}(z) } - z \mathbf{I} \rb \invv
		- \frac{a_{n}(z)}{1 + \gamma_n a_{n}(z)} \to 0.
	\end{align*}
 	Combining this with \eqref{conv1} yields
	\begin{align*}
		1 + z \E [ s_{n}(z) ] - \frac{a_{n}(z)}{1 + \gamma_n a_{n}(z)} \to 0
	\end{align*}
	and, by rearranging terms and multiplying with $\gamma_n$,
	\begin{align*}
		\frac{1}{1 + \gamma_n a_{n}(z) }  
		= 1 - \gamma_n ( 1 + z \E [s_n(z)]) + o(1).
	\end{align*}
	Substituting this in \eqref{conv1}, we get
	\begin{align} \label{approx_eq}
		\frac{1}{p} \E \tr \lb \T_n \lb 1 - \gamma_n ( 1 + z \E [ s_n(z) ] ) \rb  - z \mathbf{I}\rb\invv - \E [ s_n(z) ] \to 0.
	\end{align}
	Note that $(\E [s_n(z)])_{n\in\N}$ is a bounded sequence for any fixed $z\in\C^+$, so that, by the Bolzano-Weierstraß theorem, each subsequence of $(\E [s_n(z)])_{n\in\N}$ contains a converging subsequence. 
	It is left to show the uniqueness of the limit. Let $(\E[s_{k(n)}(z)])_{n\in\N}$ and $(\E[s_{j(n)}(z)])_{n\in\N}$ be two subsequences of $(\E [s_n(z)])_{n\in\N}$ which converge to $m_1(z)$ and $m_2(z)$, respectively. Using \eqref{approx_eq}, we see that both $m_1(z)$ and $m_2(z)$ satisfy \eqref{eq_s}. 
	Since the solution to \eqref{eq_s} is unique as discussed at the beginning of this proof, we have $m_1(z) = m_2(z)$ and finally conclude that 
	\begin{align*}
		\lim\limits_{n\to \infty} \E [ s_{n} (z) ] = s(z),
	\end{align*}
	where $s(z)$ satisfies \eqref{eq_s}. This finishes the proof of Lemma \ref{lem_step2}.

	
	\subsection{Quadratic forms} \label{sec_quad_forms}
	The crucial step in the proof of Lemma \ref{lem_step2} relies on the concentration of quadratic forms in $\rd_j$, which is the content of the following lemma. A proof can be found in Appendix \ref{appendix_c}. 
	
	\begin{lemma}\label{lem_quad_form}
For $j\in\{1, \ldots, n\}$ and $n\in\N$, let $\mathbf{B}^{(j,n)} = \mathbf{B}^{(j)} = (B_{ik}^{(j)})_{1 \leq i,k \leq p} \in\mathbb{C}^{p\times p}$ be matrices independent of $\rd_j$, which satisfiy
	$$ 
	\sup\limits_{n\in\N} \sup\limits_{1 \leq j \leq n} \norm{ \mathbf{B}^{(j)}} < \infty. 
	$$
	 Then,
		\begin{align*}
			V_n = \E \left|  \rd_j^\top \mathbf{B}^{(j)} \rd_j - n\invv \tr \T \mathbf{B}^{(j)}  \right|^2 \to 0,
		\end{align*}
		as $n\to \infty$ uniformly in $j\in\{1, \ldots, n\}$. 
	\end{lemma}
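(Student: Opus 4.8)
\textbf{Proof plan for Lemma~\ref{lem_quad_form}.} The plan is to expand the quadratic form in coordinates and control the resulting sum using the sparsity assumption~\ref{a_sparse_U} together with the moment estimates~\eqref{first_mom} and~\eqref{4th_mom} for the self-normalized variables. Write $\rd_j = (Y_{1j}, \ldots, Y_{pj})^\top$ and note that, since $\diag(\T) = \bfI_p$, we have $n^{-1}\tr \T \mathbf{B}^{(j)} = n^{-1}\sum_{i,k} T_{ik} B_{ki}^{(j)}$. Thus
\begin{align*}
\rd_j^\top \mathbf{B}^{(j)} \rd_j - n^{-1}\tr \T \mathbf{B}^{(j)}
= \sum_{i,k=1}^p B_{ik}^{(j)}\big( Y_{ij}Y_{kj} - n^{-1}T_{ik}\big)\,.
\end{align*}
The key observation is that for a fixed row index $i$, the variable $Y_{ij}$ depends on $\tilde{\bfx}_j$ only through the coordinates $\tilde X_{lj}$ with $l\in\mathcal{I}(i)$, because $X_{ij} = \sum_{l\in\mathcal{I}(i)} U_{il}\tilde X_{lj}$ and the normalization $\sqrt{X_{i1}^2+\cdots+X_{in}^2}$ is $\mathbf{y}_j$-measurable only through that same set. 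Hence $Y_{ij}$ and $Y_{kj}$ are \emph{independent} whenever $\mathcal{I}(i)\cap\mathcal{I}(k) = \emptyset$, in which case $\E[Y_{ij}Y_{kj}] = \E[Y_{ij}]\E[Y_{kj}]$, which by~\eqref{first_mom} is $o(n^{-2})$ uniformly; moreover in that case $T_{ik} = \sum_l U_{li}U_{lk} = 0$ by Remark~\ref{rem_ass}(3). So such pairs contribute only through their (small) means.

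First I would split $V_n = \E|\sum_{i,k} B_{ik}^{(j)}\eta_{ik}|^2$ with $\eta_{ik} := Y_{ij}Y_{kj} - n^{-1}T_{ik}$ into the ``diagonal in the sparsity graph'' part $D_n$, where $\mathcal{I}(i)\cap\mathcal{I}(k)\neq\emptyset$, and the ``off'' part $O_n$, where the intersection is empty. For $O_n$, since $\eta_{ik} = Y_{ij}Y_{kj} - \E[Y_{ij}]\E[Y_{kj}] + (\E[Y_{ij}]\E[Y_{kj}])$ and the first difference is a centered product of independent variables, a direct second-moment expansion — using $\norm{\mathbf{B}^{(j)}}\lesssim 1$ to bound $\sum_{i,k}|B_{ik}^{(j)}|^2 \lesssim p$ and $|\sum_{i,k}B_{ik}^{(j)}| \lesssim p$ via~\eqref{est_quad_form} applied to $\mathbf{1}_p$ — together with the bounds $\E[Y_{ij}^2] = n^{-1}$, $n\E[Y_{ij}^4] = o(1)$, and $|\E[Y_{ij}]| = o(n^{-1})$ shows $O_n = o(1)$; the mean-squared contribution $|\sum B_{ik}^{(j)}\E[Y_{ij}]\E[Y_{kj}]|^2 \lesssim (p\cdot o(n^{-1})\cdot o(n^{-1})\cdot p)^2$-type bound is easily $o(1)$. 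For $D_n$, the sparsity assumption is essential: each index $i$ has at most $C := \sup_n\max_i|\mathcal{I}(i)|$ "neighbours" $k$ (those sharing a square-root index), wait — one must be slightly careful, since $\mathcal{I}(i)\cap\mathcal{I}(k)\neq\emptyset$ for $k$ ranging over $\bigcup_{l\in\mathcal{I}(i)}\{k: l\in\mathcal{I}(k)\}$, and by~\ref{a_sparse_U} applied symmetrically this set has cardinality $\lesssim 1$ uniformly. Hence the number of nonzero $\eta_{ik}$-pairs in $D_n$ is $O(p) = O(n)$, and similarly the number of quadruples $(i,k,i',k')$ with $\mathcal{I}(i)\cap\mathcal{I}(k)\neq\emptyset$, $\mathcal{I}(i')\cap\mathcal{I}(k')\neq\emptyset$ and $(\{i,k\}\cup\{i',k'\})$-indices interlocking via shared square-root indices is $O(n)$; the terms where the two pairs are sparsity-independent factor and are handled as in $O_n$. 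Using $|B_{ik}^{(j)}| \leq \norm{\mathbf{B}^{(j)}} \lesssim 1$, the bound $\E|\eta_{ik}|^2 \lesssim \E[Y_{ij}^2Y_{kj}^2] + n^{-2} \lesssim \sqrt{\E[Y_{ij}^4]\E[Y_{kj}^4]} + n^{-2} = o(n^{-1})$ (by Cauchy--Schwarz and~\eqref{4th_mom}), Cauchy--Schwarz across the $O(n)$ interlocking quadruples gives $D_n \lesssim n\cdot o(n^{-1}) = o(1)$.

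The main obstacle I anticipate is the careful bookkeeping of \emph{which} quadruples $(i,k,i',k')$ survive: one needs that the number of such quadruples for which no product of the four $Y$'s splits off an independent centered factor is $O(n)$ (not $O(n^2)$), and this requires tracking the sparsity-graph adjacency through both pairs. The resolution is that whenever $\{i,k\}$ and $\{i',k'\}$ are "sparsity-disconnected" (no square-root index shared between the two pairs), at least one factor is independent of the rest and either centered (contributing $o(n^{-2})$ per term, times $O(n^2)$ terms, times $|B|\lesssim 1$ — still $o(1)$ after noticing the centered factor forces another small factor) or contributes a mean $O(n^{-1})$; and when they \emph{are} sparsity-connected, the neighbourhood-degree bound from~\ref{a_sparse_U} caps the count at $O(n)$. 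A clean way to organize this is to first replace each $Y_{ij}$ by $Y_{ij} - \E[Y_{ij}]$ plus its mean, expand the resulting $2^4$ terms, and observe that any term retaining a genuinely isolated centered coordinate vanishes in expectation, reducing everything to either fully-connected configurations (counted by sparsity) or configurations built from the small means. The remaining estimates are then the routine applications of~\eqref{first_mom}, \eqref{4th_mom}, Cauchy--Schwarz, and $\norm{\mathbf{B}^{(j)}}\lesssim 1$ indicated above, and uniformity in $j$ is automatic since all bounds depend on $j$ only through $\norm{\mathbf{B}^{(j)}}$ and the (uniform) moment bounds.
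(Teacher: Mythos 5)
Your overall strategy is the same as the paper's: expand $\rd_j^\top\bfB^{(j)}\rd_j-n^{-1}\tr\T\bfB^{(j)}=\sum_{i,k}B^{(j)}_{ik}(Y_{ij}Y_{kj}-n^{-1}T_{ik})$, exploit that $Y_{ij}$ is a function of $\{\tilde X_{lt}:l\in\mathcal I(i),1\le t\le n\}$ so that $Y_{ij}\perp Y_{kj}$ when $\mathcal I(i)\cap\mathcal I(k)=\emptyset$, count the "connected" pairs via \ref{a_sparse_U}, and control everything with \eqref{first_mom}, \eqref{4th_mom}, $\E[Y_{kj}^2]=n^{-1}$ and the norm bounds on $\bfB^{(j)}$ (the paper organizes the same expansion into seven explicit terms). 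Your interlocking-quadruple count and the bound $\E|\eta_{ik}|^2=o(n^{-1})$ are correct, and the purely disconnected part can indeed be pushed through.

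The genuine gap is in your treatment of the connected part $D_n$. For two internally connected pairs $(i,k)$ and $(i',k')$ that share no square-root index with each other, the cross term factors as $\E[\eta_{ik}]\,\E[\overline{\eta_{i'k'}}]$ with $\E[\eta_{ik}]=\E[Y_{ij}Y_{kj}]-n^{-1}T_{ik}$; your claim that these are "handled as in $O_n$" fails, because in $O_n$ the smallness of $\E[\eta_{ik}]$ came from within-pair independence, which is exactly what is absent for connected pairs, and moreover $T_{ik}\neq 0$ there. Cauchy--Schwarz only gives $|\E[Y_{ij}Y_{kj}]|\le n^{-1}$, and $n^{-1}|T_{ik}|$ is also of order $n^{-1}$, so each such expectation is a priori $O(n^{-1})$ with no cancellation; since there are $O(n^{2})$ such quadruples (equivalently, the bias $\E[\rd_j^\top\bfB^{(j)}\rd_j]-n^{-1}\tr\T\bfB^{(j)}=\sum_{i\neq k,\,\mathcal I(i)\cap\mathcal I(k)\neq\emptyset}B^{(j)}_{ik}\bigl(\E[Y_{ij}Y_{kj}]-n^{-1}T_{ik}\bigr)$ has $O(n)$ terms each only $O(n^{-1})$), your stated inputs yield $V_n=O(1)$, not $o(1)$. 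Centering each $Y_{ij}$ at its mean does not help: $\E[Y_{ij}]\E[Y_{kj}]=o(n^{-2})$ by \eqref{first_mom}, which leaves the $O(n^{-1})$ discrepancy untouched. What is missing is the refined mixed-moment estimate $|\E[Y_{kj}Y_{lj}]-n^{-1}T_{lk}|=o(n^{-1})$ uniformly over pairs with $\mathcal I(k)\cap\mathcal I(l)\neq\emptyset$, i.e.\ uniform asymptotic unbiasedness of the pairwise sample correlation coefficient under only a $(2+\delta)$th moment; the paper imports exactly this from formula (4) in \cite{lai:rayner:hutchinson:1999}, and it cannot be deduced from \eqref{first_mom}, \eqref{4th_mom} and Cauchy--Schwarz alone. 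With this ingredient added (proved or cited), the rest of your bookkeeping goes through along the lines of the paper's proof.
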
	
	The assumption that the matrix $\bfB^{(j)}$ is independent of $\bfy_j$ is crucial for the proof of Lemma~\ref{lem_quad_form}. However, when considering the proof of Lemma \ref{lem_step1}, the $\bfB^{(j)}$ involves the resolvent $\D_j\inv(z)$ which violated the independence assumption due to the complex dependence structure of $\bfY$ in both rows and columns.
	The following lemma shows us how to overcome this obstacle and allows us to approximate the resolvent by a matrix independent of $\bfy_j$. 
		\begin{lemma} \label{lem_Dhat}
	It holds for all $q\geq 1$, $j\in\{1, \ldots, n\}$
	\begin{align*}
		\E \| \D_1\invv(z) - \hat{\D}_1\invv(z) \|^q  = \E \| \D_j\invv(z) - \hat{\D}_j\invv(z) \|^q \to 0,
	\end{align*}
	as $n\to\infty$. 
	\end{lemma}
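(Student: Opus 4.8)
The plan is to express the difference $\D_j\inv(z) - \hat{\D}_j\inv(z)$ via the resolvent identity and then control the resulting ``error matrix'' by exploiting the self-normalization. Recall $\D(z) = \bfR - z\bfI$, $\D_j(z) = \D(z) - \bfy_j\bfy_j^\top$, $\Y = n^{-1/2}\bfM\X$, $\Y^{(j)} = n^{-1/2}\bfM^{(j)}\X$, and $\hat\D_j(z) = \Y^{(j)}(\Y^{(j)})^\top - \bfy_j^{(j)}(\bfy_j^{(j)})^\top - z\bfI$. Since $\bfM$ and $\bfM^{(j)}$ are diagonal and differ only through whether the $j$th column of $\X$ is included in each diagonal sum, set $\mathbf{\Delta}_j := \bfI - \bfM^2(\bfM^{(j)})^{-2}$, a diagonal matrix with entries $(\mathbf{\Delta}_j)_{ii} = Y_{ij}^2 \in [0,1]$ (this computation already appears in the proof of Lemma~\ref{lem_step1}). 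Writing $\D_j(z) = \bfM\bfS^{(j)}\bfM - z\bfI$ and $\hat\D_j(z) = \bfM^{(j)}\bfS^{(j)}\bfM^{(j)} - z\bfI$ (both after removing the rank-one $j$th term), one relates the two via $\bfM = \bfM^{(j)}(\bfI - \mathbf{\Delta}_j)^{1/2}$, and then the standard resolvent identity $\mathbf{A}\inv - \mathbf{B}\inv = \mathbf{A}\inv(\mathbf{B} - \mathbf{A})\mathbf{B}\inv$ expresses $\D_j\inv(z) - \hat\D_j\inv(z)$ as a product of two uniformly bounded resolvents (each bounded by $1/\Im(z)$ in spectral norm, by \eqref{eq:le1}) sandwiching a matrix whose norm is controlled by $\max_{1\le i\le p}|Y_{ij}^2|$ together with a factor of the type $\bfS^{(j)}$-weighted terms.

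First I would make the algebra precise: substituting $\bfM = \bfM^{(j)}(\bfI-\mathbf{\Delta}_j)^{1/2}$ into $\D_j(z)$ and expanding, one sees that $\hat\D_j(z) - \D_j(z)$ equals a sum of terms each containing at least one factor of $\mathbf{\Delta}_j$ (hence at least one factor $Y_{ij}^2$), multiplied by bounded quantities — here the spectral norm of $\bfM^{(j)}\bfS^{(j)}\bfM^{(j)} = \Y^{(j)}(\Y^{(j)})^\top$ enters, which equals a correlation-type matrix with rows of Euclidean norm $\le 1$, hence has spectral norm $\lesssim p \lesssim n$, but crucially it is multiplied by the small diagonal factor. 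More carefully, one wants to show $\|\D_j\inv(z) - \hat\D_j\inv(z)\| \lesssim \max_{1\le i\le p} |Y_{ij}|^2 \cdot (\text{bounded})$ plus lower-order terms; the factor $\max_i Y_{ij}^2$ tends to zero because each $Y_{ij}^2 \le 1$ and, by the moment bound \eqref{4th_mom}, $\max_{1\le i\le p} n\,\E[Y_{ij}^4] \to 0$, so $\E[\max_i Y_{ij}^{2q}] \le \sum_i \E[Y_{ij}^{2q}] \le p\,\E[Y_{1j}^{2q}]$, and for $q\ge 2$ this is $\lesssim p/n \cdot o(1) \to 0$ after interpolating $Y_{ij}^{2q} \le Y_{ij}^4$ since $|Y_{ij}|\le 1$. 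Thus $\E\|\D_j\inv(z) - \hat\D_j\inv(z)\|^q \to 0$ for all $q\ge 1$ (the case $q=1$ follows from $q=2$ by Cauchy--Schwarz, or one runs the argument directly). The equality of the two expectations across different $j$ is immediate from exchangeability of the columns $\tilde\bfx_1,\dots,\tilde\bfx_n$.

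\textbf{Main obstacle.} The delicate point is that the naive bound on $\|\Y^{(j)}(\Y^{(j)})^\top\|$ is of order $n$, so multiplying by a single factor $Y_{ij}^2$ (which is typically of order $1/n$ but can be as large as $1$) is not by itself enough — one must be careful that the error matrix genuinely carries a factor like $\max_i Y_{ij}^2$ rather than $\sum_i Y_{ij}^2 \asymp p/n \asymp 1$, which would not vanish. The resolution is to keep the two bounded resolvents $\D_j\inv(z)$ and $\hat\D_j\inv(z)$ explicitly on the outside (absorbing the problematic $\bfS^{(j)}$ growth into their $1/\Im(z)$ bounds via $\|\bfM^{(j)}\bfS^{(j)}\bfM^{(j)} - z\bfI\|\cdot\|(\cdot)\inv\|$-type manipulations), so that what remains in the middle is exactly the diagonal matrix $\mathbf{\Delta}_j$ (and possibly $z\mathbf{\Delta}_j$), whose spectral norm is $\max_i Y_{ij}^2$. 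I would therefore organize the resolvent expansion so that every surviving term is of the schematic form $\D_j\inv(z)\,(\text{bounded diagonal involving }\mathbf{\Delta}_j)\,\hat\D_j\inv(z)$ times $O(|z|/\Im(z)^2)$ constants, and then invoke the $q$th-moment bound on $\max_i Y_{ij}^2$ derived from \eqref{4th_mom} to finish.
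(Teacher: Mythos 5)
Your proposal is correct in outline and rests on the same basic mechanism as the paper's proof: write $\D_j(z)=\bfM\bfS^{(j)}\bfM-z\bfI$ and $\hat{\D}_j(z)=\Mj\bfS^{(j)}\Mj-z\bfI$, apply the resolvent identity, and keep the two resolvents (each of norm at most $1/\Im(z)$, by \eqref{eq:le1}) on the outside so that the genuinely unbounded factor $\bfS^{(j)}$ is absorbed through identities of the type $\lb\Mj\bfS^{(j)}\Mj\rb\hat{\D}_j\inv(z)=\bfI+z\hat{\D}_j\inv(z)$, leaving only the diagonal mismatch between $\bfM$ and $\Mj$. Where you deviate is in how the smallness of that diagonal factor is certified and how $L^q$ convergence is concluded. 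The paper first reduces the problem to convergence in probability, using that $\|\D_j\inv(z)-\hat{\D}_j\inv(z)\|\le 2/\Im(z)$ is uniformly bounded, and then proves $\|\bfM\lb\Mj\rb\inv-\bfI\|\conp 0$ (its \eqref{con1}) by combining $\max_{1\le i\le p}X_{ij}^2/n\conp 0$ with a lower bound on $\min_{1\le i\le p} n^{-1}\sum_t X_{it}^2$ obtained from assumption \ref{a_eigen_T} and Tikhomirov's smallest-eigenvalue theorem. You instead control $\max_{1\le i\le p}Y_{ij}^2$ by a union bound and the self-normalized fourth-moment bound \eqref{4th_mom} (Proposition \ref{thm_4mom}), which is proved independently of Lemma \ref{lem_Dhat}, so there is no circularity; this buys an argument that does not need the smallest-eigenvalue input for this step and is in the spirit of Remark \ref{rem_ass}, while the paper's route uses only elementary probability bounds on the raw entries rather than the Laplace-transform machinery behind \eqref{4th_mom}.

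Two repairs are needed to make your sketch airtight. First, the bound $\|\D_j\inv(z)-\hat{\D}_j\inv(z)\|\lesssim \max_i Y_{ij}^2$ is not what the algebra delivers uniformly: after writing $\bfM=\Mj\lb\bfI-\boldsymbol{\Delta}_j\rb^{1/2}$ with $\boldsymbol{\Delta}_j$ diagonal with entries $Y_{ij}^2$, absorbing $\bfS^{(j)}$ on both sides forces a diagonal factor such as $\lb\bfI-\boldsymbol{\Delta}_j\rb^{-1/2}-\bfI$ (equivalently, entries of order $Y_{ij}^2/(1-Y_{ij}^2)$, exactly as in the paper's middle factor $z\lb\bfI-\bfM^{-2}\lb\Mj\rb^{2}\rb$), which is comparable to $Y_{ij}^2$ only when $\max_i Y_{ij}^2$ stays away from $1$. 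So you should split on the event $\{\max_i Y_{ij}^2\le 1/2\}$, use your bound there, and use the deterministic bound $2/\Im(z)$ on the complement, whose probability vanishes by the same union bound; equivalently, note, as the paper does, that convergence in probability of the uniformly bounded norm already yields all moments $q\ge 1$. Second, under general $\T$ the rows of $\Y$ are not identically distributed, so replace $p\,\E[Y_{1j}^{2q}]$ by $p\max_{1\le k\le p}\E[Y_{kj}^{2q}]$; since \eqref{4th_mom} is stated as a maximum over $k$, nothing else in your estimate changes.
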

	
	\begin{proof}[Proof of Lemma \ref{lem_Dhat}]
	We first note that it is sufficient to show convergence in probability, since the random variable $\| \D_j\invv(z) - \hat{\D}_j\invv(z) \|$ is bounded uniformly in $n\in\N$. 
	As a preparation, we derive that 
	\begin{align}
	& \| \mathbf{M} \lb \mathbf{ M}^{(j)} \rb\inv - \mathbf{I} \| \conp 0. \label{con1}
	\end{align}
	For this purpose, we note that 
	\begin{align*}
		\| \bfM \lb \Mj \rb\inv - \bfI \| 
		=& \max_{1 \leq i \leq p} 
		\left| 
		\sqrt{1 - \frac{X_{ij}^2 }{\sum_{t=1}^n X_{it}^2}}
		- 1 \right| 
		\leq \max\limits_{1 \leq i \leq p} 
		 \frac{X_{ij}^2 }{\sum\limits_{\substack{t=1}}^n X_{it}^2} \\
		\leq & \max_{1 \leq i \leq p} 
		 \frac{X_{ij}^2 }{n}
		 \left(\min_{1\leq i \leq p} \frac{1}{n} \sum_{t=1}^n X_{it}^2 \right)^{-1}\,.
	\end{align*}
	By assumptions \ref{a_eigen_T} and \ref{a_2mom}, we have that
$\max_{1 \leq i \leq p} 
		 n^{-1} X_{ij}^2  \conp 0$
	and it follows from Theorem 1 in \cite{tikhomirov:2015}, 
	\begin{align*}
		\lambda_{\min} \lb \frac{1}{n} \tilde{\bfX} \tilde{\bfX}^\top \rb 
		\to \lb 1 - \sqrt{\gamma} \rb ^2 
		\textnormal{ almost surely. }
	\end{align*}
	Hence, we conclude using assumption \ref{a_eigen_T}
	\begin{align*}
		\min\limits_{1\leq i \leq p} \frac{1}{n} \sum\limits_{\substack{t=1}}^n X_{it}^2
		\geq & \lambda_{\min} (\T) \lambda_{\min} \lb \frac{1}{n } \tilde{\mathbf{X}} \tilde{\mathbf{X}}^\top \rb >\eta \textnormal{ almost surely},
	\end{align*}
	for some $\eta>0$ and hence, \eqref{con1} holds true. In order to show that $\| \D_j\invv(z) - \hat{\D}_j\invv(z) \|=o_{\PR}(1),$  we will approximate the resolvent $\D_j\inv(z)$ by an appropriate matrix. 
	Using 
	$$\D_j\inv(z) = \mathbf{M}\inv \lb \mathbf{S}^{(j)}  - z \bfM^{-2} \rb \inv \bfM\inv,$$
	 we write
	\begin{align*}
		\D_j\inv(z) - \lb \bfI - \lb \Mj \rb\inv \bfM \rb \D_j\inv(z) 
		= \lb \Mj \rb\inv \lb \mathbf{S}^{(j)}  - z \bfM^{-2} \rb \inv \bfM\inv
	\end{align*}
	and
	\begin{align*}
		& \lb \Mj \rb\inv \lb \mathbf{S}^{(j)}  - z \bfM^{-2} \rb \inv \bfM\inv
		- \lb \Mj\rb\inv \lb \mathbf{S}^{(j)}  - z \bfM^{-2} \rb\inv \bfM\inv 
		\lb \bfI - \bfM \lb \Mj\rb\inv \rb \\
		= & \lb \Mj \bfS^{(j)} \Mj - z \bfM^{-2} \lb \Mj\rb^2 \rb\inv .
	\end{align*}
	By \eqref{con1}, we conclude that
	\begin{align*}
		\left\| \lb \bfI - \lb \Mj \rb\inv \bfM \rb \D_j\inv(z) \right\|
		& \conp 0, \\
		 \left\| \lb \Mj\rb\inv \lb \mathbf{S}^{(j)}  - z \bfM^{-2} \rb\inv \bfM\inv 
		\lb \bfI - \bfM \lb \Mj\rb\inv \rb  \right\| & \conp 0. 
	\end{align*}
	Thus, since $\mathbf{A}\inv - \mathbf{B}\inv = \mathbf{B}\inv ( \mathbf{B} - \mathbf{A}) \mathbf{A}\inv$ for nonsingular matrices $\mathbf{A}, \mathbf{B}$, we obtain 
	\begin{align*}
		&\| \D_j\inv(z)  - \hat{\D}_j\inv(z) \| \leq  
		\left\|  \lb \Mj \bfS^{(j)} \Mj - z \bfM^{-2} \lb \Mj\rb^2 \rb\inv 
		- \hat{\D}_j\inv(z) 
		\right\| + o_{\mathbb{P}} (1) \\
		\leq & \left\| \hat{\D}_j\inv(z) \right\| 
		\left\|\lb \Mj \bfS^{(j)}  \Mj - z \bfM^{-2} \lb \Mj\rb^2 \rb\inv  \right\|
		\left\| z \lb \bfI - \bfM^{-2} \lb \Mj\rb^2 \rb  \right\| 	
		+ o_{\mathbb{P}} (1) \\
		\lesssim & 
		|z | \left\|   \bfI - \bfM^{-2} \lb \Mj\rb^2  \right\| 	
		+ o_{\mathbb{P}} (1) =  o_{\mathbb{P}} (1)\,, \qquad \nto\,.
	\end{align*}
	Here, it can be shown similarly to \eqref{con1} that the  term in the last line is asymptotically negligible. 
	\end{proof}
	
	\subsection{Moments of $\mathbf{Y}$ }	\label{sec_mom_y}

	To begin with, we formulate a consequence of the proof of Lemma \ref{lem_quad_form} about mixed second moments of entries of $\mathbf{Y}$ belonging to the same column. 
		\begin{lemma} \label{lem_mixed_second_mom}
	It holds for all $j\in\{1, \ldots, n\}$
		\begin{align*} 	
	\sum\limits_{i,k=1}^p \E [Y_{k1}^2 Y_{i1}^2 ]  = 
	\sum\limits_{i,k=1}^p \E [Y_{kj}^2 Y_{ij}^2 ] \lesssim 1.
	\end{align*}
	\end{lemma}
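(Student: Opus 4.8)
\textbf{Proof proposal for Lemma \ref{lem_mixed_second_mom}.} The plan is to reduce to the case $j=1$ and then split the double sum $\Sigma_j:=\sum_{i,k=1}^p \E[Y_{kj}^2 Y_{ij}^2]$ according to whether the rows $\ty_k$ and $\ty_i$ of $\mathbf{Y}$ are stochastically independent. Since the columns $\bfx_1,\dots,\bfx_n$ of $\mathbf{X}$ are i.i.d., permuting the columns of $\mathbf{X}$ permutes the columns of $\mathbf{Y}$ accordingly; hence the columns of $\mathbf{Y}$ are exchangeable, the law of $(Y_{1j},\dots,Y_{pj})$ does not depend on $j$, and therefore $\Sigma_j=\Sigma_1$ for every $j$. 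It thus suffices to bound $\Sigma_1=\sum_{i,k=1}^p\E[Y_{k1}^2 Y_{i1}^2]$ by a constant independent of $n$.

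The key structural observation is that the $k$th row of $\mathbf{Y}$ is a measurable function of the family $\{\tilde X_{lm}: l\in\mathcal{I}(k),\,1\le m\le n\}$, because $X_{km}=\sum_{l\in\mathcal{I}(k)}U_{kl}\tilde X_{lm}$ and $Y_{km}=X_{km}/\sqrt{X_{k1}^2+\dots+X_{kn}^2}$. Consequently, whenever $\mathcal{I}(k)\cap\mathcal{I}(i)=\emptyset$, the rows $\ty_k$ and $\ty_i$ are functions of disjoint families of independent variables, hence independent, so that $\E[Y_{k1}^2 Y_{i1}^2]=\E[Y_{k1}^2]\E[Y_{i1}^2]=n^{-2}$, using $\E[Y_{k1}^2]=1/n$ (which follows from $Y_{k1}^2+\dots+Y_{kn}^2=1$ and the exchangeability of the entries within a row). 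Summing over all such ``independent'' pairs contributes at most $p^2/n^2\to\gamma^2=O(1)$.

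For the remaining ``dependent'' pairs I would invoke the sparsity assumption \ref{a_sparse_U}. Put $C:=\sup_{n}\max_{1\le i\le p}|\mathcal{I}(i)|<\infty$. Since $\mathbf{U}$ is symmetric, $l\in\mathcal{I}(i)$ is equivalent to $i\in\mathcal{I}(l)$, so $\mathcal{I}(k)\cap\mathcal{I}(i)\neq\emptyset$ is equivalent to $i\in\bigcup_{l\in\mathcal{I}(k)}\mathcal{I}(l)$, a set of cardinality at most $|\mathcal{I}(k)|\cdot C\le C^2$. Thus for each $k$ there are at most $C^2$ indices $i$ for which $Y_{k1}$ and $Y_{i1}$ are dependent, and for each such pair $\E[Y_{k1}^2 Y_{i1}^2]\le\E[Y_{k1}^2]=1/n$ because $Y_{i1}^2\le 1$. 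Hence the dependent pairs contribute at most $pC^2/n\to C^2\gamma$. Combining the two parts yields $\Sigma_1\le p^2/n^2+pC^2/n=O(1)$, which is the claim. (If one prefers, on the dependent pairs one may instead route through Cauchy--Schwarz and $\E[Y_{k1}^4]\le\E[Y_{k1}^2]=1/n$, or even the sharper bound \eqref{4th_mom}.)

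The only genuinely delicate point is the combinatorial bookkeeping with the index sets $\mathcal{I}(\cdot)$: one must verify that the ``dependency graph'' on $\{1,\dots,p\}$ with an edge between $k$ and $i$ whenever $\mathcal{I}(k)\cap\mathcal{I}(i)\neq\emptyset$ has uniformly bounded degree, which is exactly where \ref{a_sparse_U} together with the symmetry of $\mathbf{U}$ enters. Everything else is an elementary second-moment estimate and requires no information on $\xi$ beyond $\E[\xi^2]=1$.
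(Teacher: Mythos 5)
Your proof is correct, and it differs from the paper's in a way worth recording. You share the paper's central structural step: split the pairs $(k,i)$ according to whether $\mathcal{I}(k)\cap\mathcal{I}(i)=\emptyset$, observe that disjoint index sets make the rows $\ty_k,\ty_i$ independent so that $\E[Y_{k1}^2Y_{i1}^2]=n^{-2}$, and use \ref{a_sparse_U} together with the symmetry of $\mathbf{U}$ to see that each $k$ has a uniformly bounded number of ``dependent'' partners. Where you diverge is in the treatment of the dependent pairs and the diagonal. The paper's proof handles them by citing \eqref{est_wn2} from the proof of Lemma \ref{lem_quad_form} together with \eqref{4th_mom}, i.e.\ it routes through the fourth-moment asymptotics $\max_{1\le k\le p} n\E[Y_{k1}^4]\to 0$ of Proposition \ref{thm_4mom}, whose proof rests on Laplace-transform estimates; this yields the sharper statement $\sum_{i,k}\E[Y_{kj}^2Y_{ij}^2]=p(p-1)/n^2+o(1)$. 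You instead use only the deterministic bound $Y_{i1}^2\le 1$ together with $\E[Y_{k1}^2]=1/n$, giving $\E[Y_{k1}^2Y_{i1}^2]\le 1/n$ for each of the $O(p)$ dependent pairs (the diagonal terms $i=k$ being absorbed there as well), hence $\sum_{i,k}\E[Y_{k1}^2Y_{i1}^2]\le p^2/n^2+C^2p/n\lesssim 1$. Your argument is thus more elementary and self-contained — no appeal to Proposition \ref{thm_4mom} or to the interior of the proof of Lemma \ref{lem_quad_form}, and no assumption on $\xi$ beyond what makes the rows well defined — at the price of losing the refined $o(1)$ asymptotics, which the lemma as stated does not need. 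Your reduction to $j=1$ via exchangeability of the columns of $\mathbf{X}$ (the row normalizations being permutation invariant) is also sound and is exactly what underlies the corresponding identity asserted in the paper.
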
	
	\begin{proof}
	Note that, as $\nto$,
	\begin{align*}
		\sum\limits_{i,k=1}^p \E [Y_{kj}^2 Y_{ij}^2 ] 
		= & \sum\limits_{k=1}^p \E [ Y_{kj}^4] 
		+ \tilde{V}_{n,2} + \frac{p(p-1)}{n^2} \lesssim 1,
	\end{align*}
	where the fact that 
	\begin{align*}
	\tilde{V}_{n,2} := \sum\limits_{\substack{i,k=1, \\ i\neq k}}^p 
	\lb \E [ Y_{kj}^2 Y_{ij}^2] - n^{-2} \rb 
	= o(1)  
	\end{align*}
	follows from \eqref{est_wn2} in the proof of Lemma \ref{lem_quad_form}. 
	\end{proof}

	\begin{proposition} \label{thm_first_mom}
	For all $1\leq j \leq n$, we have
		\begin{align*}
			\max\limits_{1 \leq k \leq p} | \E [  Y_{k1} ] |  = \max\limits_{1 \leq k \leq p}  | \E [ Y_{kj} ] | = o(n\inv)\,, \qquad \nto\,.
	\end{align*}
	\end{proposition}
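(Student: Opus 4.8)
The plan is to express the first moment of the self-normalized variable through the integral representation \eqref{formula_inv} with $\beta=\tfrac12$, and then to exploit the centering $\E[\xi]=0$ via the same Giné--Götze--Mason identity already used in the proof of Lemma~\ref{lem_mixed_mom_i.i.d.}. First, for fixed $k$ the coordinates $X_{k1},\dots,X_{kn}$ of the $k$th row of $\X$ are i.i.d.\ (they are built from the independent columns $\tilde{\bfx}_1,\dots,\tilde{\bfx}_n$), so the vector $(X_{k1},\dots,X_{kn})$ is exchangeable and hence $\E[Y_{kj}]=\E[Y_{k1}]$ for every $j$; it therefore suffices to treat $j=1$. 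Write $\zeta_k:=X_{k1}=\sum_{l\in\mathcal I(k)}U_{kl}\tilde X_{l1}$ and $\varphi_k(t):=\E[\e^{-t\zeta_k^2}]$. Since $|\mathcal I(k)|$ is bounded uniformly in $k,n$ by \ref{a_sparse_U}, $\sum_l U_{kl}^2=T_{kk}=1$ and $\E|\xi|^{2+\delta}<\infty$, Minkowski's and the Cauchy--Schwarz inequalities give a bound $\E|\zeta_k|^{2+\delta'}\le C$ for every $\delta'\in(0,\delta]$, with $C$ independent of $k,n$, and moreover $\E\zeta_k=0$, $\E\zeta_k^2=1$. Applying \eqref{formula_inv} with $\beta=\tfrac12$ together with Fubini's theorem -- legitimate because $\zeta_k\1\{\zeta_k=0\}=0$, so a possible atom of $\zeta_k$ at the origin is irrelevant, and $\int_0^\infty t^{-1/2}\E[|\zeta_k|\e^{-t\zeta_k^2}]\varphi_k^{\,n-1}(t)\,\dint t<\infty$ -- one gets
\begin{equation*}
n\,\E[Y_{k1}]=\frac{n}{\sqrt\pi}\int_0^\infty t^{-1/2}\,\E\!\left[\zeta_k\e^{-t\zeta_k^2}\right]\varphi_k^{\,n-1}(t)\,\dint t\,,
\end{equation*}
the analogue, in the present dependent setting, of the integrand referred to above.

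Next I would split the integral at a small $\varepsilon>0$ chosen uniformly. For the Laplace transform, the elementary inequality $\e^{-u}\le 1-u+u^{1+\delta'/2}$ ($u\ge0$, $0<\delta'<1$) yields $\varphi_k(t)\le 1-t+Ct^{1+\delta'/2}\le 1-t/2\le\e^{-t/2}$ for $t\in[0,\varepsilon]$ once $\varepsilon$ is small enough uniformly in $k,n$; as $\varphi_k$ is nonincreasing this also gives $\varphi_k(t)\le\varphi_k(\varepsilon)\le 1-\varepsilon/2$ for $t\ge\varepsilon$. On $(0,\varepsilon]$ the crucial step is to use $\E\zeta_k=0$ through the identity of \cite{gine:goetze:mason:1997}, $\E[\zeta_k\e^{-t\zeta_k^2}]=-t\,\E[\zeta_k^3\e^{-t\theta\zeta_k^2}]$ with $\theta$ uniform on $[0,1]$ independent of $\zeta_k$; optimizing $x\mapsto x^{1-\delta'}\e^{-t\theta x^2}$ over $x\ge0$ (with $0<\delta'<\min(\delta,1)$) and integrating over $\theta$ gives
\begin{equation*}
\bigl|\E[\zeta_k\e^{-t\zeta_k^2}]\bigr|\lesssim t\cdot t^{-(1-\delta')/2}\,\E|\zeta_k|^{2+\delta'}\lesssim t^{(1+\delta')/2}\,,
\end{equation*}
uniformly in $k,n$. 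Combined with $\varphi_k^{\,n-1}(t)\le\e^{-(n-1)t/2}$, the contribution of $(0,\varepsilon]$ is then bounded by a constant times $n\int_0^\infty t^{\delta'/2}\e^{-(n-1)t/2}\,\dint t\asymp n^{-\delta'/2}\to0$.

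For the part over $(\varepsilon,\infty)$ I would simply bound $|\E[\zeta_k\e^{-t\zeta_k^2}]|\le\E[|\zeta_k|\e^{-t\zeta_k^2}]$, use $\varphi_k^{\,n-1}(t)\le\varphi_k^{\,n-1}(\varepsilon)$, and note that on $\{\zeta_k\ne0\}$ one has $\int_\varepsilon^\infty t^{-1/2}\e^{-t\zeta_k^2}\,\dint t\le|\zeta_k|^{-1}\Gamma(\tfrac12)$, whence
\begin{equation*}
n\int_\varepsilon^\infty t^{-1/2}\E[|\zeta_k|\e^{-t\zeta_k^2}]\,\varphi_k^{\,n-1}(t)\,\dint t\le\sqrt\pi\,\P(\zeta_k\ne0)\,n\,\varphi_k^{\,n-1}(\varepsilon)\le\sqrt\pi\,n\,(1-\varepsilon/2)^{\,n-1}\to0\,,
\end{equation*}
again uniformly in $k$; one may alternatively invoke Lemma~3.1 of \cite{fuchs:joffe:teugels:2001} to control $n\,\varphi_k^{\,n-1}(\varepsilon)$, exactly as in the proof of Lemma~\ref{lem_mixed_mom_i.i.d.}. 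Adding the two contributions yields $\max_{1\le k\le p}n\,|\E[Y_{k1}]|\to0$, which is the claim.

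The main obstacle is the piece over $(0,\varepsilon]$: the crude estimate $|\E[\zeta_k\e^{-t\zeta_k^2}]|\lesssim t^{-1/2}$ would produce a non-integrable $t^{-1}$ singularity after multiplication by $n$, so the cancellation encoded in $\E\zeta_k=0$ -- which the Giné--Götze--Mason identity converts into the extra factor $t^{(1+\delta')/2}$ -- is genuinely needed. The second delicate point, present throughout, is that every estimate must be uniform in both $k$ and $n$ because the $Y_{k1}=Y_{k1}^{(n)}$ form a triangular array; this uniformity is exactly where the sparsity assumption \ref{a_sparse_U} enters, via the uniform moment bound on $\zeta_k$ and, through it, the uniform control $\varphi_k(t)\le\e^{-t/2}$ on $[0,\varepsilon]$.
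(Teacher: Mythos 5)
Your proof is correct, but it follows a genuinely different route from the paper's. The paper first truncates, replacing $Y_{k1}$ by $Y_{k1}\1\{|X_{k1}|\le\sqrt{n}\delta_n\}$ via Lemma~\ref{lem_pr_delta}, and then handles the near-zero part of the integral through Lemma~\ref{lem_int}, whose key step is the first-order Taylor expansion \eqref{taylor} of the exponential: this reduces matters to $|\E[X_{k1}\1\{|X_{k1}|\le\sqrt n\delta_n\}]|$ (small by centering plus the truncation) and $s\,\E|X_{k1}^3\1\{|X_{k1}|\le\sqrt n\delta_n\}|$ (controlled by the truncation level), combined with the bounds $ns\,\varphi_k^{n-1}(s)\lesssim1$ and the qualitative uniform statements of Lemmas~\ref{lem_laplace_bound} and \ref{lem_deriv_laplace}, all glued together by dominated convergence; the tail over $(\varepsilon,\infty)$ is killed by $n\varphi_k^n(\varepsilon)/\varphi_k(\varepsilon)\to0$. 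You avoid truncation entirely: you import the Gin\'e--G\"otze--Mason identity $\E[\zeta_k\e^{-t\zeta_k^2}]=-t\,\E[\zeta_k^3\e^{-t\theta\zeta_k^2}]$ (which the paper uses only in the i.i.d.\ section, in the proof of Lemma~\ref{lem_mixed_mom_i.i.d.}) and convert the centering into the factor $t^{(1+\delta')/2}$, with uniformity in $k,n$ coming from the uniform bound on $\E|X_{k1}|^{2+\delta'}$ that \ref{a_sparse_U}, $\sum_l U_{kl}^2=T_{kk}=1$ and \ref{a_2mom} provide. Moreover, your elementary bound $\varphi_k(t)\le1-t+Ct^{1+\delta'/2}\le\e^{-t/2}$ on $(0,\varepsilon]$ and $\varphi_k(\varepsilon)\le1-\varepsilon/2$ replaces the compactness/uniform-integrability arguments of Lemmas~\ref{lem_laplace_bound} and \ref{lem_deriv_laplace} by an explicit quantitative estimate, and in fact yields a rate, $\max_k n|\E[Y_{k1}]|=O(n^{-\delta'/2})$ plus an exponentially small tail term, which the paper's dominated-convergence argument does not make explicit. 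The trade-off is that the paper's truncation-plus-Taylor machinery (Lemmas~\ref{lem_pr_delta}, \ref{lem_int}, \ref{lem_laplace_bound}, \ref{lem_deriv_laplace}) is reused for the fourth-moment bound in Proposition~\ref{thm_4mom}, whereas your argument is tailored to the first moment; also, when writing it up you should display the Minkowski computation giving $\sup_{k,n}\E|X_{k1}|^{2+\delta'}<\infty$ and the Fubini/Tonelli justification (your bounds on the two pieces of $\int_0^\infty t^{-1/2}\E[|X_{k1}|\e^{-tX_{k1}^2}]\varphi_k^{n-1}(t)\dint t$ already supply it), but these are presentational points, not gaps.
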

	
	\begin{proof}
	To begin with, we truncate the random variable $X_{k1}$ using Lemma \ref{lem_pr_delta}. 
	Note that 
	\begin{align*}
		n \left| \E [  Y_{k1} ] -  \E [ Y_{k1} \1\{ | X_{k1}| \leq \sqrt{n} \delta_n \} ] \right| 
		= n \left|  \E [ Y_{k1} \1\{ | X_{k1}| > \sqrt{n} \delta_n \} ] \right| 
		\leq n \PR \lb  | X_{k1}| > \sqrt{n} \delta_n \rb = o(1)
	\end{align*}
	uniformly over $1 \leq k \leq p$. 
	Combining \eqref{formula_inv} with Fubini's theorem, we deduce
		\begin{align}
			  \E&[  Y_{k1} \1\{ | X_{k1}| \leq \sqrt{n} \delta_n \} ]  
			 =   \E \left[ \frac{1}{\Gamma \lb \frac{1}{2} \rb }  
			\int\limits_0^\infty X_{k1}  \exp\lb - s \sum\limits_{j=1}^n X_{kj}^2 \rb  s^{-\frac{1}{2}}ds \, \1\{ | X_{k1}| \leq \sqrt{n} \delta_n \} \right] \nonumber
		\\
			&=   \frac{1}{\Gamma \lb \frac{1}{2} \rb }
			\int\limits_0^\infty \E \left[  X_{k1}  \exp\lb - s  X_{k1}^2 \rb \1\{ | X_{k1}| \leq \sqrt{n} \delta_n \} \right] 
			\E \left[ \exp\lb - s \sum\limits_{j=2}^n X_{kj}^2 \rb \right] s^{-\frac{1}{2}} ds \nonumber  \\
			&=  \frac{1}{\Gamma \lb \frac{1}{2} \rb }
			\int\limits_0^\infty \E \left[   X_{k1}   \exp\lb - s  X_{k1}^2 \rb \1\{ | X_{k1}| \leq \sqrt{n} \delta_n \} \right]
			 \lb \varphi_{k}(s) \rb^{n-1} s^{-\frac{1}{2}} ds , \nonumber
		\end{align}
		where
		\begin{align}
			\varphi_{k} (s) = \E \left[ \exp\lb - s  X_{k1}^2 \rb \right], ~ s>0, 
			\label{def_laplace}
		\end{align}
		denotes the Laplace transform of $X_{k1}^2$, $1\leq k \leq p$. 
	Let $\varepsilon>0$. 
	Lemma \ref{lem_int} implies that
	\begin{align}
		n  | \E &[ Y_{k1}  \1\{ | X_{k1}| \leq \sqrt{n} \delta_n \}] | \nonumber\\
		&\leq 
		 \frac{n}{\Gamma \lb \frac{1}{2} \rb }
			\int\limits_{\varepsilon}^\infty \E \left[ |  X_{k1} |  \exp\lb - s  X_{k1}^2 \rb \1\{ | X_{k1}| \leq \sqrt{n} \delta_n \} \right] \lb \varphi_{k}(s) \rb^{n-1} s^{-\frac{1}{2}} ds
		+ o(1) \nonumber\\ 
		&\leq  \frac{n}{\Gamma \lb \frac{1}{2} \rb }
			\int\limits_{\varepsilon}^\infty \E \left[ |  X_{k1} |  \exp\lb - s  X_{k1}^2 \rb  \right] \lb \varphi_{k}(s) \rb^{n-1} s^{-\frac{1}{2}} ds
		+ o(1), \label{int_first_mom}
	\end{align}
	where the symbol $o(1)$ holds uniformly in $k\in\N$. 
	Invoking Lemma \ref{lem_laplace_bound} and Lemma \ref{lem_deriv_laplace}, we conclude that 
	\begin{align*}
		\lim\limits_{n \to \infty} \max\limits_{1 \leq k \leq p} \frac{n \varphi^n_k(\varepsilon) }{\varphi_k(\varepsilon) } 
		= 0.
	\end{align*}
	Combining this observation with the estimate 
	\begin{align*}
		 n
			\int\limits_{\varepsilon}^\infty \E \left[ |  X_{k1} |  \exp\lb - s  X_{k1}^2 \rb \right] \lb \varphi_{k}(s) \rb^{n-1} s^{-\frac{1}{2}} ds
			\leq 
			\frac{ n \varphi_k^n(\varepsilon)}{\varphi_k (\varepsilon)} 
			\int_\varepsilon^\infty \E \left[ |  X_{k1} |  \exp\lb - s  X_{k1}^2 \rb \right] s^{-\frac{1}{2}} ds,
	\end{align*}
	the assertion finally follows, since
	\begin{align*}
		&\int\limits_\varepsilon^\infty \E \left[ |  X_{k1} |  \exp\lb - s  X_{k1}^2 \rb \right] s^{-\frac{1}{2}} ds 
		\leq  \int\limits_0^\infty \E \left[ |  X_{k1} |  \exp\lb - s  X_{k1}^2 \rb \right] s^{-\frac{1}{2}} ds \\
		&=   \E \left[ \int\limits_0^\infty 
		 |  X_{k1} |  \exp\lb - s  X_{k1}^2 \rb  s^{-\frac{1}{2}} ds \right] 
		 =  \E \left[ \int\limits_0^\infty 
		 \exp\lb - s   \rb  s^{-\frac{1}{2}} ds \right] 
		 = \Gamma \lb \frac{1}{2} \rb. 
	\end{align*}
\end{proof}

%
	\begin{proposition} \label{thm_4mom}
		It holds
		\begin{align*}
			\lim\limits_{n\to\infty} \max\limits_{1 \leq k \leq p} n \E [ Y_{k1}^4] = 0. 
		\end{align*}
	\end{proposition}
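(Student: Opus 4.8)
The plan is to follow the scheme of the proof of Proposition~\ref{thm_first_mom}: truncate $X_{k1}$, pass to an integral representation via \eqref{formula_inv} (now with $\beta=2$, so $\Gamma(\beta)=1$), and split the resulting $t$-integral at a fixed small $\varepsilon>0$, keeping every estimate uniform in the row index $k$ and in $n$. First I would use Lemma~\ref{lem_pr_delta} to fix a sequence $\delta_n\downarrow 0$ with $n\max_{1\le k\le p}\P(|X_{k1}|>\sqrt{n}\,\delta_n)\to 0$; since $0\le Y_{k1}^4\le 1$, this gives $n\,\E[Y_{k1}^4]=n\,\E[Y_{k1}^4\1\{|X_{k1}|\le\sqrt{n}\,\delta_n\}]+o(1)$ uniformly in $k$. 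Writing $\varphi_k$ for the Laplace transform of $X_{k1}^2$ from \eqref{def_laplace} and using that $X_{k1},\dots,X_{kn}$ are i.i.d.\ for fixed $k$ (each being a function of one of the independent columns $\tilde{\bfx}_1,\dots,\tilde{\bfx}_n$), \eqref{formula_inv} with $\beta=2$ and Fubini's theorem yield
\[
\E[Y_{k1}^4\1\{|X_{k1}|\le\sqrt{n}\,\delta_n\}]=\int_0^\infty t\,\E[X_{k1}^4 e^{-tX_{k1}^2}\1\{|X_{k1}|\le\sqrt{n}\,\delta_n\}]\,\varphi_k(t)^{n-1}\dint t.
\]

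Fix a small $\varepsilon>0$ and split $\int_0^\infty=\int_0^\varepsilon+\int_\varepsilon^\infty$. On $(\varepsilon,\infty)$ I would drop the indicator, bound $\varphi_k(t)^{n-1}\le\varphi_k(\varepsilon)^{n-1}$ by monotonicity, and use the exact identity $\int_0^\infty t\,\E[X_{k1}^4 e^{-tX_{k1}^2}]\dint t=\P(X_{k1}\neq 0)\le 1$ (from $\int_0^\infty t e^{-tx^2}\dint t=x^{-4}$ and Tonelli's theorem), to obtain
\[
n\int_\varepsilon^\infty t\,\E[X_{k1}^4 e^{-tX_{k1}^2}]\,\varphi_k(t)^{n-1}\dint t\ \le\ n\,\varphi_k(\varepsilon)^{n-1}\ \longrightarrow\ 0,
\]
uniformly in $k$, provided $\sup_{n,k}\varphi_k(\varepsilon)<1$. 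On $(0,\varepsilon)$ the truncation gives $X_{k1}^4\1\{|X_{k1}|\le\sqrt{n}\,\delta_n\}\le n\delta_n^2\,X_{k1}^2$, hence $\E[X_{k1}^4 e^{-tX_{k1}^2}\1\{\cdot\}]\le n\delta_n^2\,\E[X_{k1}^2]=n\delta_n^2$; combined with a uniform bound $\varphi_k(t)^{n-1}\le e^{-c(n-1)t}$ on $(0,\varepsilon)$, this yields
\[
n\int_0^\varepsilon t\,\E[X_{k1}^4 e^{-tX_{k1}^2}\1\{\cdot\}]\,\varphi_k(t)^{n-1}\dint t\ \le\ n^2\delta_n^2\int_0^\infty te^{-c(n-1)t}\dint t\ =\ \frac{n^2\delta_n^2}{c^2(n-1)^2}\ \longrightarrow\ 0,
\]
again uniformly in $k$. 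Adding the three contributions gives $\max_{1\le k\le p}n\,\E[Y_{k1}^4]\to 0$ (and the same holds with $Y_{kj}$, $1\le j\le n$, by exchangeability of the columns).

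The only genuinely delicate point --- and the only place where the triangular array $Y_{k1}=Y_{k1}^{(n)}$ matters --- is to make the two Laplace transform facts used above \emph{uniform} in $k$ and $n$: the upper bound $\sup_{n,k}\varphi_k(\varepsilon)<1$ and the near-origin estimate $1-\varphi_k(t)\gtrsim t$ on $(0,\varepsilon)$, which is what produces $\varphi_k(t)^{n-1}\le e^{-c(n-1)t}$. Both follow from uniform control of the low moments of $X_{k1}$: one has $\Var(X_{k1})=T_{kk}=1$, and, crucially, $\sup_{n,k}\E|X_{k1}|^{2+\delta}<\infty$, because by the sparsity assumption~\ref{a_sparse_U} each $X_{k1}=\sum_{l\in\mathcal{I}(k)}U_{kl}\tilde{X}_{l1}$ is a sum of at most $\sup_{n,i}|\mathcal{I}(i)|<\infty$ terms with $\sum_l U_{kl}^2=T_{kk}=1$, so that $\|X_{k1}\|_{2+\delta}\le(\sup_{n,i}|\mathcal{I}(i)|)^{1/2}\,\|\xi\|_{2+\delta}$ by Cauchy--Schwarz and Minkowski together with \ref{a_2mom}. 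These uniform Laplace estimates are exactly the content of Lemmas~\ref{lem_laplace_bound}, \ref{lem_deriv_laplace} and~\ref{lem_int}, which I would invoke as in the proof of Proposition~\ref{thm_first_mom}.
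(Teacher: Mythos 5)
Your argument is correct, and its skeleton is the paper's: the integral representation \eqref{formula_inv} for $Y_{k1}^4$ in terms of the Laplace transform $\varphi_k$ of $X_{k1}^2$, a split of the $t$-integral at a fixed $\varepsilon$, and the uniform Laplace estimates of Lemmas \ref{lem_laplace_bound} and \ref{lem_deriv_laplace}. Where you genuinely deviate is in how the two pieces are controlled. The paper does \emph{not} truncate: it writes the integrand as $s\varphi_k''(s)\varphi_k^{n-1}(s)$, disposes of $(\varepsilon,\infty)$ via Lemma \ref{lem_cn} (partial integration of $\int_\varepsilon^\infty s\varphi_k''(s)\,ds$), and treats $(0,\varepsilon)$ by dominated convergence, using the moment assumption \ref{a_2mom} directly through the bound $\varphi_k''(s)\lesssim s^{-1+\delta/2}$ together with $ns\varphi_k^{n-1}(s)\lesssim 1$ (see \eqref{est_varphi}). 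You instead import the truncation at $\sqrt n\,\delta_n$ from the first-moment proof (Proposition \ref{thm_first_mom}, Lemma \ref{lem_pr_delta}), which lets you bound the near-origin piece explicitly by $n^2\delta_n^2/(c^2(n-1)^2)$ without any use of $\varphi_k''$ or of the $(2+\delta)$-moment beyond what is already inside the cited lemmas; and on the tail you exploit the exact identity $\int_0^\infty t\,\E[X^4e^{-tX^2}]\,dt=\P(X\neq0)\le1$, which makes Lemma \ref{lem_cn} unnecessary. Both versions work; yours gives explicit rates and a slightly leaner set of ingredients, while the paper's avoids the truncation step altogether. Two small points of bookkeeping: the bound $\varphi_k(t)^{n-1}\le e^{-c(n-1)t}$ on $(0,\varepsilon)$ is not literally "the content" of the lemmas you cite --- Lemma \ref{lem_int} is about the first moment and plays no role here --- but it does follow in one line from Lemma \ref{lem_deriv_laplace} and convexity of $\varphi_k$, since $1-\varphi_k(t)=\int_0^t(-\varphi_k'(u))\,du\ge t\,(-\varphi_k'(\varepsilon))\ge ct$ for $t\le\varepsilon$ (this is the same mechanism the paper uses in \eqref{ineq_nsphi} via Lemma 3.3 of \cite{fuchs:joffe:teugels:2001}); and your derivation of $\sup_{n,k}\E|X_{k1}|^{2+\delta}<\infty$ from \ref{a_sparse_U} and \ref{a_2mom} is right, but in your proof it is only needed inside Lemmas \ref{lem_laplace_bound} and \ref{lem_deriv_laplace}, not as a separate ingredient.
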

	\begin{proof}
		Using \eqref{formula_inv} and Fubini's theorem, we obtain
		\begin{align*}
			n \E [ Y_{k1}^4] = n \int\limits_0^\infty s \varphi_k''(s) \varphi_k^{n-1} (s) ds ,
	\end{align*}			
	where $\varphi_k$ denotes the Laplace transform of $X_{k1}^2$ defined in \eqref{def_laplace}. 
	Let $\varepsilon >0$.
	By Lemma \ref{lem_cn}, it suffices to show that
	\begin{align} \label{conv_an}
	\lim\limits_{n\to\infty} \max\limits_{1 \leq k \leq p} 
		n \int\limits_0^\varepsilon s \varphi_k''(s) \varphi_k^{n-1} (s) ds 
		\leq  \lim\limits_{n\to\infty} 
		n \int\limits_0^\varepsilon s \max\limits_{1 \leq k \leq p}  \lb  \varphi_k''(s) \varphi_k^{n-1} (s) \rb ds 
		= 0 . 
	\end{align}
	In order to apply the dominated convergence theorem, we will first argue that the integrand
	\begin{align} \label{integrand}
		\max\limits_{1 \leq k \leq p} n s \varphi_k''(s) \varphi_k^{n-1} (s), ~ \quad 0 < s <\varepsilon,
	\end{align}
	is dominated by an integrable function independent of $n\in\N$. 
	For this purpose, note that 
	\begin{align*}
	 \varphi_k''(s) 
		= \E \left[ X_{k1}^{2+\delta} \lb X_{k1}^{2-\delta} \exp\lb - s X_{k1} ^2 \rb \rb \right] \lesssim \E \left[ |X_{k1}|^{2+\delta}  \right] s^{-1+0.5\delta} \lesssim s^{-1+0.5\delta}, ~ 1 \leq k \leq p,
	\end{align*}
	where the latter is integrable on $(0, \varepsilon)$. Here, we used that $\E [ \xi^{2+\delta} ] < \infty$ and that the function $f(x) = x^{2-\delta} \exp( - s x^2 )$ has extremal points at $x= \pm \sqrt{( 2 - \delta ) / 2s }$.  
	Moreover, we obtain 
	\begin{equation} \label{est_varphi}
		\max_{1 \leq k \leq p}   n s \varphi_k^{n-1} (s) 
		 \leq \max\limits_{1 \leq k \leq p}  \frac{ e\inv }{\varphi_k(\varepsilon) ( - \varphi_k''(\varepsilon) )  } 
		 \lesssim 1 , \quad ~ 0 < s < \varepsilon, 
	\end{equation}
	where we proceeded similarly as in the proof of Theorem 3.2 in \cite{fuchs:joffe:teugels:2001} for the first inequality and for the second one, we used Lemma \ref{lem_deriv_laplace}. \\ 
	Finally, we observe that the integrand \eqref{integrand} converges to zero. More precisely, we obtain for $0 < s < \varepsilon$, 
	 \begin{align*}
	 	\eqref{integrand} 
	 	\leq \max\limits_{1 \leq k \leq p}   n \varepsilon \varphi_k''(s) \varphi_k^{n-1} (s)
	 	\lesssim \max\limits_{1 \leq k \leq p}   n  \varphi_k^{n-1} (s)
	 	= o(1),
	 \end{align*}
	 where we used Lemma \ref{lem_laplace_bound}.
	Summarizing, an application of the dominated convergence theorem implies \eqref{conv_an} and thus finishes the proof of Proposition \ref{thm_4mom}. 
	\end{proof}

	
	\appendix

	 \section{Useful results}
	    By Theorem 2.7 in \cite{yao:zheng:bai:2015}, we have the following lemma. 
\begin{lemma}\label{lem:stieltjes} 
Let $\mu, \mu_1, \mu_2, \ldots$ be (random) probability measures with support in $\R^+$. Then $\mu_n$ converges weakly to $\mu$ almost surely if and only if $s_{\mu_n}(z) \to s_{\mu}(z)$ almost surely for all $z\in\mathbb{C}^+$.
\end{lemma}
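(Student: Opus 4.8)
The plan is to prove the two implications separately, in each case first collapsing the family of almost-sure statements (one for each $z$) into a single almost-sure event.

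\emph{Forward direction.} Suppose $\mu_n\to\mu$ weakly almost surely, and let $\Omega_0$ be the probability-one event on which this holds. Fix $\omega\in\Omega_0$ and $z\in\mathbb{C}^+$. Since $x\mapsto (x-z)^{-1}$ is bounded and continuous on $\R$ (indeed $|(x-z)^{-1}|\le 1/\Im(z)$), the definition of weak convergence gives $s_{\mu_n}(z)=\int (x-z)^{-1}\dint\mu_n(x)\to\int (x-z)^{-1}\dint\mu(x)=s_\mu(z)$. As $\omega\in\Omega_0$ and $z\in\mathbb{C}^+$ were arbitrary, $s_{\mu_n}(z)\to s_\mu(z)$ on $\Omega_0$ for every $z\in\mathbb{C}^+$.

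\emph{Reverse direction.} This is the substantive part. Fix a countable set $D\subset\mathbb{C}^+$ possessing an accumulation point in $\mathbb{C}^+$ (for instance the numbers with rational real part and positive rational imaginary part). By hypothesis, for each $z\in D$ there is a probability-one event on which $s_{\mu_n}(z)\to s_\mu(z)$; intersecting these countably many events yields a probability-one event $\Omega_1$ on which $s_{\mu_n}(z)\to s_\mu(z)$ holds simultaneously for all $z\in D$. We argue pathwise on $\Omega_1$. Fix $\omega\in\Omega_1$ and view $(\mu_n)=(\mu_n(\omega))$ as a deterministic sequence of probability measures on $\R^+$. By Helly's selection theorem every subsequence of $(\mu_n)$ has a further subsequence $(\mu_{n_k})$ converging vaguely to some sub-probability measure $\nu$ on $\R^+$. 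Because $x\mapsto (x-z)^{-1}$ vanishes at infinity, vague convergence gives $s_{\mu_{n_k}}(z)\to s_\nu(z)$ for every $z\in\mathbb{C}^+$, so together with the convergence on $\Omega_1$ we obtain $s_\nu(z)=s_\mu(z)$ for all $z\in D$. Since $s_\nu$ and $s_\mu$ are holomorphic on $\mathbb{C}^+$ and agree on a set with an accumulation point, the identity theorem forces $s_\nu\equiv s_\mu$ on $\mathbb{C}^+$; by the Stieltjes inversion formula a finite measure is determined by its Stieltjes transform, hence $\nu=\mu$. In particular $\nu$ is a probability measure, so no mass escapes to infinity along $(\mu_{n_k})$ and the vague convergence upgrades to weak convergence $\mu_{n_k}\to\mu$. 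As every subsequence of $(\mu_n)$ thus has a further subsequence converging weakly to $\mu$ — and weak convergence of probability measures is metrizable — the whole sequence $(\mu_n)$ converges weakly to $\mu$. This holds for every $\omega\in\Omega_1$, i.e.\ almost surely.

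\emph{Main obstacle.} The only genuine subtlety is the quantifier bookkeeping: upgrading ``for each $z$, convergence holds almost surely'' to ``almost surely, convergence holds for all $z$''. This is handled by working on the countable set $D$ and using holomorphy of Stieltjes transforms to propagate convergence to all of $\mathbb{C}^+$, together with the $C_0$-property of $(x-z)^{-1}$, which prevents loss of mass at infinity precisely because the limit $\mu$ is a probability measure. Everything else is standard; alternatively one may simply invoke Theorem~2.7 of \cite{yao:zheng:bai:2015}, which is exactly this statement.
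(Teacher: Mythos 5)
Your argument is correct. Note, however, that the paper does not prove this lemma at all: it simply invokes Theorem~2.7 of \cite{yao:zheng:bai:2015}, exactly as you suggest in your final sentence. So your contribution is a self-contained version of the standard argument behind that citation, and it is sound: the forward direction is immediate from boundedness and continuity of $x\mapsto(x-z)^{-1}$ (applied pathwise, separately to real and imaginary parts), and the reverse direction correctly handles the only delicate point, namely that the hypothesis gives for each fixed $z$ an almost-sure event. Your fix --- intersecting over a countable $D\subset\mathbb{C}^+$ with an accumulation point, then arguing pathwise via Helly selection, vague convergence against the $C_0$ kernel, the identity theorem for the holomorphic Stieltjes transforms, Stieltjes inversion to get $\nu=\mu$, and the fact that $\nu(\R^+)=1$ rules out escape of mass so that the subsequence principle upgrades vague to weak convergence of the full sequence --- is precisely the proof one would write out for the cited theorem. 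The trade-off is the obvious one: the paper's citation buys brevity, while your version is more elementary and transparent, and in particular makes explicit where holomorphy and the probability-measure assumption on the limit are used.
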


\begin{lemma}\label{lemma3.4} 
Let $(Z_n)_{n\in\N}$ be complex valued random variables such that $\sup_{n\in\N} |Z_n|$ is bounded almost surely. If $(Y_n)_{n\in \N}$ are random variables satisfying 
\begin{equation*}
\lim_{\nto} \E\Big[ \frac{Z_n}{1+Z_n}\Big] -\E\Big[ \frac{\E[Z_n|Y_n]}{1+\E[Z_n|Y_n]}\Big]=0\,,
\end{equation*}
then $Z_n-\E[Z_n|Y_n] \cip 0$, as $\nto$. 
\end{lemma}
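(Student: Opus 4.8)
The plan is to write $W_n:=\E[Z_n\,|\,Y_n]$ (so that $|W_n|\le\sup_n|Z_n|$ is bounded as well, almost surely) and to exploit the algebraic identity
\[
\frac{a}{1+a}-\frac{b}{1+b}=\frac{a-b}{(1+a)(1+b)}.
\]
Applied with $a=Z_n$, $b=W_n$, multiplied by $\overline{Z_n-W_n}$ and averaged, it gives, with $g(w)=w/(1+w)$,
\[
\E\!\left[\frac{|Z_n-W_n|^2}{(1+Z_n)(1+W_n)}\right]=\E\!\big[\overline{(Z_n-W_n)}\,g(Z_n)\big]=\E\!\big[\overline{(Z_n-W_n)}\,\big(g(Z_n)-\E[g(Z_n)\,|\,Y_n]\big)\big],
\]
where I used that $g(W_n)$ and $\E[g(Z_n)\,|\,Y_n]$ are $Y_n$-measurable while $\E[\,Z_n-W_n\,|\,Y_n]=0$. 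Alongside this I would record the exact remainder identity $\tfrac{1}{1+Z_n}=\tfrac{1}{1+W_n}-\tfrac{Z_n-W_n}{(1+W_n)^2}+\tfrac{(Z_n-W_n)^2}{(1+W_n)^2(1+Z_n)}$; conditioning on $Y_n$ kills the linear term, and since $\tfrac{w}{1+w}=1-\tfrac{1}{1+w}$ the hypothesis becomes
\[
\E\!\left[\frac{(Z_n-W_n)^2}{(1+W_n)^2(1+Z_n)}\right]\longrightarrow 0,\qquad \nto.
\]
The a.s.\ bound $\sup_n|Z_n|\le C$ (hence also on $|W_n|$) is what makes all of these quantities legitimate and keeps $|1+W_n|$, $|1+Z_n|$ controlled.

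The goal from here is $\E|Z_n-W_n|^2\to 0$, which by boundedness forces $Z_n-W_n\cip 0$. In the real-variable situation this is immediate: $1/(1+w)$ is strictly convex on $(-1,\infty)$, so the last display is $\ge c\,\E[(Z_n-W_n)^2]\ge 0$ for a constant $c>0$ depending only on $C$ and on a lower bound for $|1+Z_n|$, and the convergence to $0$ forces $\E[(Z_n-W_n)^2]\to 0$. For genuinely complex $Z_n$ I would argue by subsequences: boundedness makes the laws of $(Z_n,W_n)$ tight on $\C^2$, so along any subsequence a further subsequence has $(Z_n,W_n)\cid(Z,W)$; uniform integrability upgrades $\E[Z_n\,|\,Y_n]=W_n$ to $\E[Z\,|\,\sigma(W)]=W$, and continuity and boundedness of $g$ on a neighbourhood of the compact limit support give $\E[g(Z)]=\E[g(W)]$. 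Feeding $(Z,W)$ back through the remainder identity makes the limiting remainder have zero mean, and one then uses that $g$ is injective — indeed bi-Lipschitz — on $\{\,|w|\le C,\ |1+w|\ge c\,\}$ together with a strict Jensen-type estimate adapted to the self-map $g$ of the upper half-plane to conclude $Z=W$ a.s. Since the subsequential limit of $Z_n-W_n$ is always $0$, this yields $Z_n-W_n\cip 0$.

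The step I expect to be the main obstacle is precisely the passage from $\E[g(Z_n)]-\E[g(W_n)]\to 0$ to $\E|Z_n-W_n|^2\to 0$ in the complex case: $g(w)=w/(1+w)$ is \emph{not} a convex function of $w\in\C\cong\R^2$ — its real and imaginary parts are harmonic and $|g|^2$ is only subharmonic — so there is no one-line complex Jensen inequality, and the argument must route through the compactness of the bounded family $(Z_n,W_n)$, the conditional centring $\E[\,Z_n-W_n\,|\,Y_n]=0$, and the fine structure of the Herglotz map $g$. The almost sure boundedness hypothesis enters exactly here, making the family tight and $g$ Lipschitz on the relevant region.
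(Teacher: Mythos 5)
Your algebraic preparation is sound: the expansion
\begin{equation*}
\frac{1}{1+Z_n}=\frac{1}{1+W_n}-\frac{Z_n-W_n}{(1+W_n)^2}+\frac{(Z_n-W_n)^2}{(1+W_n)^2(1+Z_n)},\qquad W_n:=\E[Z_n\mid Y_n],
\end{equation*}
combined with the tower property does convert the hypothesis into $\E\big[(Z_n-W_n)^2/\{(1+W_n)^2(1+Z_n)\}\big]\to 0$, and in the real case (values in a compact subset of $(-1,\infty)$) strict convexity of $w\mapsto 1/(1+w)$ finishes the job as you indicate. The genuine gap is exactly the step you yourself flag as the main obstacle: passing from $\E[g(Z_n)]-\E[g(W_n)]\to 0$ to $Z_n-W_n\cip 0$ for genuinely complex variables. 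The device you propose to close it --- injectivity/bi-Lipschitzness of $g$ together with a ``strict Jensen-type estimate adapted to the self-map $g$ of the upper half-plane'' --- does not exist: $g(w)=w/(1+w)$ is holomorphic away from $-1$, so $\Re g$ and $\Im g$ are harmonic and satisfy the mean-value \emph{equality}; there is no strict Jensen inequality for $g$ on planar domains.

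Concretely, let $\Theta$ be uniform on $[0,2\pi)$, take $Z_n\equiv Z=i+\tfrac14 e^{i\Theta}$ and $Y_n$ constant, so that $W_n=\E[Z]=i$. Then $|Z_n|\le 2$, $\Im Z_n\ge \tfrac34$, $|1+Z_n|\ge \tfrac34$, $\E[Z_n\mid Y_n]=W_n$, and since $w\mapsto 1/(1+w)$ is holomorphic on a neighbourhood of the disc $|w-i|\le\tfrac14$, the mean-value property gives $\E[g(Z_n)]=g(i)=\E[g(W_n)]$ exactly for every $n$, while $|Z_n-W_n|=\tfrac14$ almost surely. Thus all the information available at your subsequential limit ($\E[Z\mid W]=W$, $\E[g(Z)]=\E[g(W)]$, boundedness, separation from $-1$, values in the open upper half-plane) is compatible with $Z\ne W$, so the final step is not merely unproven but false; and since this example satisfies the hypotheses of Lemma~\ref{lemma3.4} verbatim, no argument using only those hypotheses can succeed. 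The paper does not reprove the lemma but defers to Lemma~3.4 of Yaskov (2016), whose setting carries structural information about the values of the variables beyond boundedness (of the kind available for the quadratic forms $\ty_1\bfD(z)\ty_1^\top$ in the application in Theorem~\ref{thm:MP}); the mechanism there is the positivity/convexity of the remainder term in the display above on the relevant range --- essentially a real-variable Jensen argument --- and not a complex-analytic one. That extra structure is what your blind route is missing and, by the example above, cannot be conjured from the stated assumptions alone.
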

\begin{proof}
The proof is very similar to the proof of Lemma 3.4 in \cite{yaskov:2016} and, thus, is omitted for the sake of brevity. 
\end{proof}

We conclude this section by collecting some useful inequalities for matrices; see, e.g., \cite{bai:silverstein:2010}. 
 \begin{lemma}
 For a real, symmetric, positive semidefinite $p\times p$ matrix $\bfC$, $x\in \R^p$, $z\in \C^+$ with $\Im(z)=v>0$ the following inequalities hold:
\begin{equation}\label{eq:le1}
\twonorm{(\bfC-z\bfI)^{-1}}\le \tfrac{1}{v}\,,
\end{equation}
\begin{equation}\label{eq:le4}
\Im\big( z+z \tr\big( (\bfC-z\bfI)^{-1} \big) \big)  \ge v \quad \text{ and }  \quad
\Im\big(\tr\big( (\bfC-z\bfI)^{-1} \big) \big)>0\,,
\end{equation}
\begin{equation}\label{eq:le5}
\Im\big( z+z x^\top(\bfC-z\bfI)^{-1} x \big)  \ge v\,.
\end{equation}
\end{lemma}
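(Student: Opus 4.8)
The plan is to diagonalize $\bfC$ and thereby reduce all three inequalities to elementary scalar estimates. Since $\bfC$ is real symmetric, write its spectral decomposition $\bfC=\sum_{i=1}^{p}\lambda_i u_i u_i^{\top}$ with $u_1,\dots,u_p$ an orthonormal basis of $\R^p$, and note that positive semidefiniteness forces $\lambda_i\ge 0$ for every $i$. Then $(\bfC-z\bfI)^{-1}=\sum_{i}(\lambda_i-z)^{-1}u_i u_i^{\top}$, and this matrix is normal, so its spectral norm equals its spectral radius $\max_i|\lambda_i-z|^{-1}$. Writing $v=\Im(z)>0$, one has $|\lambda_i-z|^{2}=(\lambda_i-\Re z)^{2}+v^{2}\ge v^{2}$, hence $|\lambda_i-z|^{-1}\le v^{-1}$ for all $i$, which is \eqref{eq:le1}.

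For \eqref{eq:le5} I would expand the quadratic form in the same basis: with $c_i:=(u_i^{\top}x)^{2}\ge 0$ one gets $x^{\top}(\bfC-z\bfI)^{-1}x=\sum_i c_i(\lambda_i-z)^{-1}$, so $z\,x^{\top}(\bfC-z\bfI)^{-1}x=\sum_i c_i\, z(\lambda_i-z)^{-1}$. A direct computation using $z(\lambda_i-z)^{-1}=z(\lambda_i-\bar z)\,|\lambda_i-z|^{-2}$ gives $\Im\big(z(\lambda_i-z)^{-1}\big)=v\lambda_i\big((\lambda_i-\Re z)^{2}+v^{2}\big)^{-1}\ge 0$, where $\lambda_i\ge 0$ is used. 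Multiplying by the nonnegative weights $c_i$, summing over $i$, and adding $\Im(z)=v$ yields $\Im\big(z+z\,x^{\top}(\bfC-z\bfI)^{-1}x\big)\ge v$. The two statements in \eqref{eq:le4} follow in the same way by taking $x=u_k$ and summing over $k$: this gives $\tr\big((\bfC-z\bfI)^{-1}\big)=\sum_k(\lambda_k-z)^{-1}$ and $z\tr\big((\bfC-z\bfI)^{-1}\big)=\sum_k z(\lambda_k-z)^{-1}$, so the first inequality in \eqref{eq:le4} is obtained exactly as above, while $\Im\big((\lambda_k-z)^{-1}\big)=v\big((\lambda_k-\Re z)^{2}+v^{2}\big)^{-1}>0$ for each $k$ gives $\Im\big(\tr((\bfC-z\bfI)^{-1})\big)>0$.

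There is essentially no obstacle here: once $\bfC$ is diagonalized everything is an explicit computation, and the only point requiring (minor) care is the bookkeeping of real and imaginary parts in $z(\lambda_i-z)^{-1}$, where the hypothesis $\lambda_i\ge 0$ — i.e.\ positive semidefiniteness of $\bfC$ — is precisely what makes the relevant imaginary parts nonnegative. As an alternative avoiding diagonalization altogether, I would set $y:=(\bfC-z\bfI)^{-1}x$ and use $x=(\bfC-\bar z\bfI)\bar y$ together with the symmetry of $\bfC$ to write $x^{\top}(\bfC-z\bfI)^{-1}x=\bar y^{\top}\bfC y-\bar z\,\bar y^{\top}y$, where $\bar y^{\top}\bfC y=y^{*}\bfC y\ge 0$ is real and $\bar y^{\top}y=\|y\|^{2}\ge 0$; then $\Im\big(z\,x^{\top}(\bfC-z\bfI)^{-1}x\big)=v\,y^{*}\bfC y\ge 0$ is immediate, and the trace statements follow by applying this with $x=e_k$ and summing, noting that $y_k:=(\bfC-z\bfI)^{-1}e_k\neq 0$. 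Either route is entirely routine.
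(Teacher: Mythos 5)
Your proof is correct. Note that the paper does not prove this lemma at all: it merely collects these standard resolvent inequalities and points to \cite{bai:silverstein:2010}, so there is no "paper proof" to match step by step. Your spectral-decomposition argument is a complete and self-contained derivation: the identity $\Im\big(z(\lambda_i-z)^{-1}\big)=v\lambda_i/|\lambda_i-z|^{2}$ is exactly where positive semidefiniteness enters for \eqref{eq:le4} and \eqref{eq:le5}, and $\Im\big((\lambda_k-z)^{-1}\big)=v/|\lambda_k-z|^{2}>0$ gives the strict positivity of $\Im\big(\tr(\bfC-z\bfI)^{-1}\big)$ directly, while normality of $(\bfC-z\bfI)^{-1}$ and $|\lambda_i-z|\ge v$ give \eqref{eq:le1}. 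Your alternative route via $y=(\bfC-z\bfI)^{-1}x$ is the argument one usually finds in the random-matrix literature (it also works when $\bfC$ is only Hermitian and $x$ complex, after replacing $x^\top$ by $x^{*}$); the two small points to make explicit there are that $\bar y^{\top}\bfC y=y^{*}\bfC y\ge 0$ is real because the real symmetric PSD matrix $\bfC$ is Hermitian PSD on $\C^{p}$, and that the strictness in the second part of \eqref{eq:le4} comes from $\Im\big(x^\top(\bfC-z\bfI)^{-1}x\big)=v\|y\|^{2}>0$ with $y\neq 0$, which you do note. Either way the argument is routine and complete; the diagonalization is perhaps the cleanest way to see \eqref{eq:le1}, while the resolvent identity generalizes more readily beyond the symmetric real case.
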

    \section{Proof of Lemma \ref{lem_quad_form} in Section \ref{sec_quad_forms}} \label{appendix_c}
    In this section, we give a proof for Lemma \ref{lem_quad_form} which is one of the main ingredients for proving the main result for the dependent case. 
    Consequently, throughout this section, we work under the assumptions of Section \ref{sec_dependent}. 
	\begin{proof}[Proof of Lemma \ref{lem_quad_form}]
	For convenience, we suppress the dependency on $j\in\{1, \ldots, n\}$ of the matrix $\mathbf{B}^{(j)} = \mathbf{B}$ by our notation, that is, we denote its entries by $B_{ik}$ instead of $B_{ik}^{(j)}$, $1 \leq i,k \leq p$. 
	We have (using that $\diag(\bfT) = \bfI$ and $\E [Y_{kj}^2] = n\inv$)
	\begin{align}
		V_n  = &  \sum\limits_{k,l,m,r=1}^p B_{kl} \overline{B}_{mr} \E [ Y_{kj} Y_{lj} Y_{mj} Y_{rj} ] 
		-  n\invv \sum\limits_{l,k=1}^p \E [Y_{lj} Y_{kj}] \overline{B}_{kl} \sum\limits_{i,m=1}^p T_{im} B_{mi} \nonumber \\
		& -  n\invv \sum\limits_{l,k=1}^p \E [Y_{lj} Y_{kj}] B_{kl} \sum\limits_{i,m=1}^p T_{im} \overline{B}_{mi} 
		+ n^{-2} \sum\limits_{i,l,r,k=1}^p T_{il} B_{li} T_{kr} \overline{B}_{rk} \nonumber \\
		= & \sum\limits_{q=1}^7 V_{n,q} + o(1), \label{rep_wn}
	\end{align}
		where
	\begin{align*} 
	V_{n,1} = & \sum\limits_{k=1}^p  | B_{kk} |^2 \lb \E [ Y_{kj}^4] - n^{-1} T_{kk} \E [Y_{kj}^2] \rb 
	= \sum\limits_{k=1}^p | B_{kk} |^2 \lb \E [ Y_{kj}^4] - n^{-2} \rb, \\
	V_{n,2} = & \sum\limits_{\substack{k,l=1, \\ k\neq l}}^p B_{kk} \overline{B}_{ll} \lb \E [ Y_{kj}^2 Y_{lj}^2] - n\invv T_{ll} \E [Y_{kj}^2] \rb
	=   \sum\limits_{\substack{k,l=1, \\ k\neq l}}^p B_{kk} \overline{B}_{ll} \lb \E [ Y_{kj}^2 Y_{lj}^2] - n^{-2} \rb ,\\
	V_{n,3} = & \sum\limits_{\substack{k,l=1, \\ k\neq l}}^p \lb | B_{kl} |^2 + B_{kl} \overline{B}_{lk} \rb 
	\E [ Y_{kj}^2 Y_{lj}^2] ,\\
	 V_{n,4} = & n\inv \tr ( \overline{\mathbf{B}} \T ) 
	\sum\limits_{\substack{l,k=1, \\ k \neq l}}^p B_{kl} \lb n\inv T_{lk} - \E [ Y_{lj} Y_{kj} ] \rb, \\
	V_{n,5} = & - n\inv \sum\limits_{\substack{l,k=1, \\ l \neq k }}^p \E [ Y_{kj} Y_{lj} ] \overline{B}_{kl} \tr ( \mathbf{B} \T ), \\
	V_{n,6} = & - n^{-2} \tr ( \overline{\mathbf{B}} ) \sum\limits_{\substack{m,i=1, \\ m\neq i} }^p T_{im} B_{mi}, \\
	V_{n,7} = & \sum\limits_{\substack{k,l,m,i=1, \\ | \{k,l,m,i\}| \geq 3 } }^p B_{kl} \overline{B}_{mi} \E [ Y_{kj} Y_{lj} Y_{mj} Y_{ij} ] .
	\end{align*}	
	For the estimate in \eqref{rep_wn}, we used that by \eqref{first_mom}, \eqref{4th_mom} and assumption \ref{a_sparse_U} we have uniformly in $j$
	\begin{align*}
		\sum\limits_{\substack{k,l=1, \\ |\{k,l\} |=2 }}^p  \!\!\!\! \left| \E [ Y_{kj}^3 Y_{lj}   ] \right|
		= \!\!\!\! \sum\limits_{\substack{k,l=1, \\ |\{k,l\} |=2, \\ k \in \mathcal{I}(l)  }}^p  \!\!\!\! \left| \E [ Y_{kj}^3 Y_{lj}   ] \right| 
		+ \!\!\!\! \sum\limits_{\substack{k,l=1, \\ |\{k,l\} |=2, \\ k \notin \mathcal{I}(l)  }}^p  \!\!\!\! \left| \E [ Y_{kj}^3 ] \E [ Y_{lj}   ] \right| 
		\leq \!\!\!\! \sum\limits_{\substack{k,l=1, \\ |\{k,l\} |=2, \\ k \in \mathcal{I}(l)  }}^p \!\!\!\! \lb \E [ Y_{kj}^4] \E [ Y_{lj}^2   ] \rb \sq + o(1)  =o(1).
	\end{align*}

We aim to show that 
	\begin{align} \label{aim_w}
		\sum\limits_{q=1}^7 V_{n,q} = o(1), ~ \quad n\to\infty.
\end{align}	 
	In order to prove \eqref{aim_w}, we first note that $V_{n,1} = o(1)$ due to \eqref{4th_mom}.
	For the second summand $V_{n,2}$, we estimate
	\begin{align}
		| V_{n,2} | \lesssim &  \sum\limits_{\substack{k,l=1, \\ k\neq l}}^p 
		\left| \E [ Y_{kj}^2 Y_{lj}^2] - n^{-2} \right|
		= \sum\limits_{\substack{k,l=1, \\ k\neq l, \\ \mathcal{I}(k) \cap \mathcal{I}(l) = \emptyset }}^p 
		\left| \E [ Y_{kj}^2 ] \E [ Y_{lj}^2] - n^{-2} \right|
		+ \sum\limits_{\substack{k,l=1, \\ k\neq l, \\ \mathcal{I}(k) \cap \mathcal{I}(l) \neq \emptyset }}^p 
		\left| \E [ Y_{kj}^2 Y_{lj}^2] - n^{-2} \right| \nonumber \\
		= &  \sum\limits_{\substack{k,l=1, \\ k\neq l, \\ \mathcal{I}(k) \cap \mathcal{I}(l) \neq \emptyset }}^p 
		\left| \E [ Y_{kj}^2 Y_{lj}^2] - n^{-2} \right| = o(1), \label{est_wn2}
	\end{align}
	where we used \eqref{4th_mom} and $|\mathcal{I}(k) \cap \mathcal{I}(l) | \lesssim 1$. 
	Regarding $V_{n,3}$, we obtain similarly 
	\begin{align*}
		V_{n,3} =
		 \frac{1}{n^2} \sum\limits_{\substack{k,l=1, \\ k\neq l, \\ \mathcal{I}(k) \cap \mathcal{I}(l) = \emptyset }}^p \lb | B_{kl} |^2 + B_{kl} \overline{B}_{lk} \rb 
	+ \sum\limits_{\substack{k,l=1, \\ k\neq l, \\ \mathcal{I}(k) \cap \mathcal{I}(l) \neq \emptyset }}^p \lb | B_{kl} |^2 + B_{kl} \overline{B}_{lk} \rb 
	\E [ Y_{kj}^2 Y_{lj}^2] 
	= : V_{n,3,1} + V_{n,3,2},
	\end{align*}
	where, with $\star$ denoting the conjugate transpose of a matrix,
	\begin{align*}
	| V_{n,3,1} | \leq & n^{-2} \left[ \tr \lb  \mathbf{B}^{(j)} \lb \mathbf{B}^{(j)}\rb^\star \rb + \tr  \left|  \mathbf{B}^{(j)} \overline{\mathbf{B}^{(j)}}  \right| \right]  = o(1), \\
	| V_{n,3,2}| \leq & \max\limits_{1 \leq k \leq p} \E [ Y_{k1}^4]  \sum\limits_{\substack{k,l=1, \\ k\neq l, \\ \mathcal{I}(k) \cap \mathcal{I}(l) \neq \emptyset }}^p \lb | B_{kl} |^2 +|  B_{kl} \overline{B}_{lk} | \rb = o(1). 
	\end{align*}
	Next, we obtain for $V_{n,4}$ 
	\begin{align*}
		| V_{n,4} | \lesssim & \sum\limits_{\substack{k,l=1, \\ k\neq l}}^p 
		\left| \E [ Y_{kj} Y_{lj} ] - n\inv T_{lk} \right| 
		= \sum\limits_{\substack{k,l=1, \\ k\neq l, \\ \mathcal{I}(l) \cap \mathcal{I}(k) = \emptyset}}^p 
		\left| \E [ Y_{kj} ] \E [ Y_{lj} ]  \right| 
		+ \sum\limits_{\substack{k,l=1, \\ k\neq l, \\ \mathcal{I}(l) \cap \mathcal{I}(k) \neq \emptyset}}^p 
		\left| \E [ Y_{kj} Y_{lj} ] - n\inv T_{lk} \right| \\
		= & o(1)+\sum\limits_{\substack{k,l=1, \\ k\neq l, \\ \mathcal{I}(l) \cap \mathcal{I}(k) \neq \emptyset}}^p 
		\left| \E [ Y_{kj} Y_{lj} ] - n\inv T_{lk} \right| = o(1) , \\
	\end{align*}
	where we used \eqref{first_mom}
	and the fact that $T_{lk} = 0$ follows from $ \mathcal{I}(l) \cap \mathcal{I}(k) = \emptyset $.
	We also used \ref{a_sparse_U} combined with $\left| \E [ Y_{kj} Y_{lj} ] - n\inv T_{lk} \right| = o(n\inv)$, which follows from formula (4) in \cite{lai:rayner:hutchinson:1999}. \\ 
	Investigating $V_{n,5}$ further, we write
		$V_{n,5} = V_{n,5,1} + V_{n,5,2} + V_{n,5,3}$, 
	where
	\begin{align*}
		V_{n,5,1} = & - n\inv \sum\limits_{\substack{ k,l,m,i =1, \\ | \{ k,l,m,i\} | = 4 }}^p \overline{B}_{kl} B_{mi} T_{im} \E [ Y_{kj} Y_{lj} ] , \\
		V_{n,5,2} = & - n\inv \sum\limits_{\substack{k,l,m=1, \\ |\{ k,l,m\} | = 3 }} 
		\overline{B}_{kl} B_{mm} \E [ Y_{kj} Y_{lj} ] , \\
		V_{n,5,3} = & - n\inv \sum\limits_{\substack{k,l=1, \\ k\neq l}}^p \lb \overline{B}_{kl} \E [ Y_{kj} Y_{lj} ] \sum\limits_{\substack{m,i=1,~ m\neq i \\ m\in\{l,k\} \textnormal{ or } i \in \{l,k\} }}^p T_{im} B_{mi} \rb . 
	\end{align*}
	Note that due to \eqref{first_mom} 
	\begin{align*}
		- V_{n,5,3} 
		&=  n\inv \sum\limits_{\substack{k,l=1, \\ k\neq l, \\ l \notin \mathcal{I}(k) }}^p \lb \overline{B}_{kl} \E [ Y_{kj} ] \E [ Y_{lj} ] \sum\limits_{\substack{m,i=1,~ m\neq i \\ m\in\{l,k\} \textnormal{ or } i \in \{l,k\} }}^p T_{im} B_{mi} \rb \\
		&\quad + n\inv \sum\limits_{\substack{k,l=1, \\ k\neq l, \\ l \in \mathcal{I}(k) }}^p \lb \overline{B}_{kl} \E [ Y_{kj} Y_{lj} ] \sum\limits_{\substack{m,i=1,~ m\neq i \\ m\in\{l,k\} \textnormal{ or } i \in \{l,k\} }}^p T_{im} B_{mi} \rb \\
		&=  n\inv \sum\limits_{\substack{k,l=1, \\ k\neq l, \\ l \in \mathcal{I}(k) }}^p \lb \overline{B}_{kl} \E [ Y_{kj} Y_{lj} ] \sum\limits_{\substack{m,i=1,~ m\neq i \\ m\in\{l,k\} \textnormal{ or } i \in \{l,k\}, \\ i \in \mathcal{I}(m) }}^p T_{im} B_{mi} \rb + o(1) = o(1),
	\end{align*}
	where we used \ref{a_sparse_U} and $| \E [ Y_{kj} Y_{lj} ] | \leq n\inv $ by \Holder's inequality. 
	Combining $V_{n,5,1}$ with the corresponding summand in $V_{n,7}$ ($|\{k,l,m,i\} | =4$), we have
	\begin{align*}
	&V_{n,5,1} + \sum\limits_{\substack{k,l,m,i=1, \\ | \{k,l,m,i\}| = 4 } }^p B_{kl} \overline{B}_{mi} \E [ Y_{kj} Y_{lj} Y_{mj} Y_{ij} ] 
	=  \sum\limits_{\substack{k,l,m,i=1, \\ | \{k,l,m,i\}| = 4 } }^p 
	B_{kl} \overline{B}_{mi} \lb \E [ Y_{kj} Y_{lj} Y_{mj} Y_{ij} ]  - n\inv T_{im} \E [ Y_{kj} Y_{lj} ] \rb \\
	&=  \sum\limits_{\substack{k,l,m,i=1, \\ | \{k,l,m,i\}| = 4 } }^p 
	B_{kl} \overline{B}_{mi} \lb \E [ Y_{kj} Y_{lj} Y_{mj} Y_{ij} ]  - \E [ Y_{ij} Y_{mj}] \E [ Y_{kj} Y_{lj} ] \rb + o(1) \\
	&=  \sum\limits_{\substack{k,l,m,i=1, \\ | \{k,l,m,i\}| = 4 , \\ \lb \mathcal{I}(k) \cup \mathcal{I}(l) \rb \cap \lb \mathcal{I}(m) \cup \mathcal{I}(i) \rb \neq \emptyset } }^p 
	B_{kl} \overline{B}_{mi} \cov (Y_{kj} Y_{lj}, Y_{mj} Y_{ij} ) + o(1) =  o(1)\,. 
	\end{align*}
	 For the last equality, we used that, if one random variable, say $Y_{kj}$, is independent of $Y_{ij}, Y_{kj}$ and $ Y_{lj}$, then the corresponding covariance term  satisfies due to \eqref{first_mom}
	 \begin{align*}
	 	| \cov (Y_{kj} Y_{lj}, Y_{mj} Y_{ij} ) | 
	 	= \left| \E [Y_{kj}] \E[ Y_{ij} Y_{kj} Y_{lj}] - \E [ Y_{kj}] \E[Y_{lj}] \E [ Y_{mj} Y_{ij} ] \right|
	 	= o \lb n^{-2} \rb,
	 \end{align*} 
	 and in this case, we have $\mathcal{O}(n^2)$ summands. 
	 Otherwise, we use the estimate
	 \begin{align*}
	 	\cov (Y_{kj} Y_{lj}, Y_{mj} Y_{ij} ) = o\lb n\inv \rb
	 \end{align*}
	 and note that we only have $\mathcal{O}(n)$ summands in this case due to \ref{a_sparse_U}. 
	 	  Next, we combine $V_{n,5,2}$ with a corresponding summand in $V_{n,7}$ ($k=l, ~ |\{ k,m,i\} | =3 $) and get
	  \begin{align}
	  	& V_{n,5,2} + \sum\limits_{\substack{k,m,i=1, \\ | \{k,m,i\}| =3 } }^p B_{kk} \overline{B}_{mi} \E [ Y_{kj}^2 Y_{mj} Y_{ij} ]
	  	=  \sum\limits_{\substack{k,m,i=1, \\ | \{k,m,i\}| =3 } }^p B_{kk} \overline{B}_{mi} \lb  \E [ Y_{kj}^2 Y_{mj} Y_{ij} ] - n\inv \E [ Y_{mj} Y_{ij} ] \rb \nonumber \\
	  	= & \sum\limits_{\substack{k,m,i=1, \\ | \{k,m,i\}| =3, \\ k \in \mathcal{I}(m) \cup \mathcal{I}(i) } }^p B_{kk} \overline{B}_{mi}
	  	\cov ( Y_{kj}^2, Y_{mj} Y_{ij} )  \label{b1} \\
	  	= & \sum\limits_{\substack{k,m,i=1, \\ | \{k,m,i\}| =3, \\ k \in \mathcal{I}(m) \cup \mathcal{I}(i), \\ \mathcal{I}(m) \cap \mathcal{I}(i) \neq \emptyset } }^p B_{kk} \overline{B}_{mi}
	  	\cov ( Y_{kj}^2, Y_{mj} Y_{ij} )  
	  	 + \sum\limits_{\substack{k,m,i=1, \\ | \{k,m,i\}| =3, \\ k \in \mathcal{I}(m) \cup \mathcal{I}(i), \\  \mathcal{I}(m) \cap \mathcal{I}(i) = \emptyset } }^p B_{kk} \overline{B}_{mi}
	  	\cov ( Y_{kj}^2, Y_{mj} Y_{ij} )
	  	= o(1), \nonumber 
	  \end{align}
	  where we used 
	  \begin{align*}
	  	| \cov ( Y_{kj}^2, Y_{mj} Y_{ij} ) | 
	  	= \begin{cases}
	  	 o \lb n\inv \rb & \textnormal{ if }  k \in \mathcal{I}(m) \cup \mathcal{I}(i) \textnormal{ and } \mathcal{I}(m) \cap \mathcal{I}(i) \neq \emptyset, \\
	  	o \lb n^{-2} \rb & \textnormal{ if } k \in \mathcal{I}(m) \cup \mathcal{I}(i) \textnormal{ and }  \mathcal{I}(m) \cap \mathcal{I}(i) = \emptyset,
	  	 \end{cases} 
	  \end{align*}
	  and \eqref{first_mom}, \eqref{4th_mom} as well as assumption \ref{a_sparse_U}.
	
	  Considering $V_{n,6}$, we decompose
	  \begin{align*}
	  	- V_{n,6} = & n^{-2} \sum\limits_{\substack{i,k,m=1, \\ |\{ i,k,m\} | = 3 }}^p \overline{B}_{kk} T_{im} B_{mi} 
	  	+ n^{-2} \sum\limits_{\substack{i,k,m=1, \\ i \neq m , \\ k = i \textnormal{ or } k =m }}^p \overline{B}_{kk} T_{im} B_{mi} \\
	  	= &  n^{-2} \sum\limits_{\substack{i,k,m=1, \\ |\{ i,k,m\} | = 3 }}^p \overline{B}_{kk} T_{im} B_{mi} 
	  	+ n^{-2} \sum\limits_{\substack{i,k,m=1, \\ i \neq m , \\ k = i \textnormal{ or } k =m, \\ i \in \mathcal{I}(m) }}^p \overline{B}_{kk} T_{im} B_{mi} \\
	  	= &  n^{-2} \sum\limits_{\substack{i,k,m=1, \\ |\{ i,k,m\} | = 3 }}^p \overline{B}_{kk} T_{im} B_{mi}  + o(1),
	  \end{align*}
	  where we used \ref{a_sparse_U}. Combining this term with the corresponding term in $V_{n,7}$ ($m=i, ~|\{k,l,m\}|=3$) we have
	  \begin{align*}
	  	V_{n,6} + \sum\limits_{\substack{k,l,m=1, \\ | \{k,l,m\}| = 3 } }^p B_{kl} \overline{B}_{mm} \E [ Y_{kj} Y_{lj} Y_{mj}^2 ] 
	  	= &  \sum\limits_{\substack{k,l,m=1, \\ | \{k,l,m\}| = 3 } }^p B_{kl} \overline{B}_{mm} \lb \E [ Y_{kj} Y_{lj} Y_{mj}^2 ] - n^{-2} T_{kl} \rb 
	  	+o(1) \\
	  	= &   \sum\limits_{\substack{k,l,m=1, \\ | \{k,l,m\}| = 3 } }^p B_{kl} \overline{B}_{mm} \lb \E [ Y_{kj} Y_{lj} Y_{mj}^2 ] - n^{-1} \E [ Y_{kj} Y_{lj} ]  \rb 
	  	+o(1)  \\ 
	  	= & \sum\limits_{\substack{k,l,m=1, \\ | \{k,l,m\}| = 3, \\ m\in \mathcal{I}(k) \cup \mathcal{I}(l) } }^p B_{kl} \overline{B}_{mm} \cov( Y_{kj} Y_{lj}, Y_{mj}^2 ) 
	  	+o(1) 
	  	= o(1),
	\end{align*}	   
	where we concluded similarly to \eqref{b1} for the last estimate. 
	Finally, we devote our attention to the remaining terms in $V_{n,7}$, which are
	\begin{align*}
	& \sum\limits_{ \substack{ k,m,i=1, \\ |\{k,m,i\}| = 3}} 
	B_{km} \overline{B_{mi}} \E [ Y_{kj} Y_{ij} Y_{mj}^2 ], 
	\qquad \sum\limits_{ \substack{ k,m,i=1, \\ |\{k,m,i\}| = 3}} 
	B_{kl} \overline{B_{mk}} \E [ Y_{kj}^2 Y_{lj} Y_{mj} ], \\
	& \sum\limits_{ \substack{ k,m,i=1, \\ |\{k,m,i\}| = 3}} 
	B_{kl} \overline{B_{ki}} \E [ Y_{kj}^2 Y_{lj} Y_{ij} ], 
	\qquad \sum\limits_{ \substack{ k,m,i=1, \\ |\{k,m,i\}| = 3}} 
	B_{kl} \overline{B_{ml}} \E [ Y_{kj} Y_{lj}^2 Y_{mj} ].
	\end{align*}
	Exemplarily, we consider
	\begin{align*}
		 & \left| \sum\limits_{ \substack{ k,m,i=1, \\ |\{k,m,i\}| = 3}}^p
	 B_{km} \overline{B_{mi}} \E [ Y_{kj} Y_{ij} Y_{mj}^2 ] \right| 
	 \leq \sum\limits_{ \substack{ k,m,i=1, \\ |\{k,m,i\}| = 3}}^p
	 \left| B_{km} \overline{B_{mi}} \E [ Y_{kj} Y_{ij} Y_{mj}^2 ] \right| \\
	 = & o(n\inv) \sum\limits_{\substack{k,i=1, \\ k \neq i }}^p
	\sum\limits_{\substack{m=1, \\ m\notin \{k,i\} }}^p \left| B_{km} B_{mi} \right| 
	=  o(1).
	\end{align*}
	The other terms can be shown to be asymptotically negligible in a similar way. 
Thus, \eqref{aim_w} holds true and since our estimates did not depend on $j$,  \eqref{aim_w} holds also uniformly in $j$.
	\end{proof}

	\section{Properties of the Laplace Transform $\varphi_k$} \label{appendix_laplace}
	In the following, we investigate the Laplace transform $\varphi_k$ of $X_{k1}^2$ further and provide estimates for integrals involving this function. 
	Throughout this section, we work under the assumptions of Section~\ref{sec_dependent} if not explicitly stated otherwise.  
	\begin{lemma} \label{lem_pr_delta}
	There exists a positive sequence $(\delta_n)_{n\in\N}$ independent of $1\leq k \leq p$ converging to zero and satisfying
	\begin{align*} 
		\lim\limits_{n\to\infty} n \max\limits_{1 \leq k \leq p} \PR ( | X_{k1} | > \sqrt{n} \delta_n) = 0.
	\end{align*}
	\end{lemma}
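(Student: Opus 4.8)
The plan is to reduce the statement to Markov's inequality once a uniform-in-$k$ (and in $n$) moment bound for $X_{k1}$ is available, and then to pick $\delta_n$ so that the resulting bound still beats the factor $n$. The only place where the hypotheses genuinely enter is the moment bound, which is where assumptions \ref{a_sparse_U} and \ref{a_2mom} together with $\diag(\T)=\bfI_p$ are used.

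First I would record the uniform moment estimate. By \eqref{def_yij}, $X_{k1}=\sum_{l=1}^p U_{kl}\tilde X_{l1}=\sum_{l\in\mathcal I(k)}U_{kl}\tilde X_{l1}$ is a sum of at most $C:=\sup_{n}\max_{1\le k\le p}|\mathcal I(k)|<\infty$ terms. Since $\diag(\T)=\bfI_p$, one has $\sum_l U_{kl}^2=T_{kk}=1$, so $|U_{kl}|\le 1$ for all $k,l$, and in particular $\sum_{l\in\mathcal I(k)}|U_{kl}|^{2+\delta}\le\sum_l U_{kl}^2=1$. Applying the $c_r$-inequality (convexity of $x\mapsto x^{2+\delta}$) to the at-most-$C$ summands and then taking expectations,
\[
\E|X_{k1}|^{2+\delta}\le |\mathcal I(k)|^{1+\delta}\sum_{l\in\mathcal I(k)}|U_{kl}|^{2+\delta}\,\E|\xi|^{2+\delta}\le C^{1+\delta}\,\E|\xi|^{2+\delta}=:M,
\]
where $M<\infty$ by \ref{a_2mom} and is independent of $k$ and $n$.

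Next I would invoke Markov's inequality: for any positive sequence $(\delta_n)_{n\in\N}$,
\[
n\,\P\bigl(|X_{k1}|>\sqrt n\,\delta_n\bigr)\le \frac{n\,\E|X_{k1}|^{2+\delta}}{(\sqrt n\,\delta_n)^{2+\delta}}\le \frac{M}{n^{\delta/2}\,\delta_n^{2+\delta}},
\]
uniformly in $1\le k\le p$. It then remains to choose $(\delta_n)$ tending to zero with $n^{\delta/2}\delta_n^{2+\delta}\to\infty$; for instance $\delta_n=n^{-\delta/(4(2+\delta))}$ works, since $\delta_n\to0$ and $n^{\delta/2}\delta_n^{2+\delta}=n^{\delta/4}\to\infty$. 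Hence $n\max_{1\le k\le p}\P(|X_{k1}|>\sqrt n\,\delta_n)\le M\,n^{-\delta/4}\to 0$, which is the claim.

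As should be clear, there is essentially no obstacle: the argument is a one-line Markov estimate once one observes that the $(2+\delta)$-th moment of $X_{k1}$ is bounded uniformly in $k$ and $n$, and the latter is an immediate consequence of the sparsity bound \ref{a_sparse_U} and $\|U_{k\cdot}\|_2=1$. (If one wished to weaken \ref{a_2mom} to \eqref{first_mom}--\eqref{4th_mom} as in Remark~\ref{rem_ass}, one could instead use a second-moment truncation argument, but under the stated assumptions the above suffices.)
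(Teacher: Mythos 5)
Your proof is correct, but it follows a genuinely different route from the paper. You bound $\E|X_{k1}|^{2+\delta}$ uniformly in $k$ and $n$ (via the $c_r$-inequality, the sparsity bound \ref{a_sparse_U}, and $|U_{kl}|\le 1$, which indeed follows from $\sum_l U_{kl}^2=T_{kk}=1$), and then conclude by Markov's inequality with an explicit choice $\delta_n=n^{-\delta/(4(2+\delta))}$, obtaining even a polynomial rate $O(n^{-\delta/4})$. The paper instead never touches moments of order above two: it first produces a sequence $\tilde\delta_n\to 0$ with $n\,\P(|\tilde X_{11}|>\sqrt n\,\tilde\delta_n)\to 0$ using only $\E[\tilde X_{11}^2]=2\int_0^\infty x\,\P(|\tilde X_{11}|>x)\,dx<\infty$, and then transfers this to $X_{k1}$ through the event inclusions $\{|X_{k1}|>y\}\subset\{\max_{l\in\mathcal I(k)}|\tilde X_{l1}|>y/q\}$ (again exploiting $|U_{kl}|\le 1$ and \ref{a_sparse_U}) and a union bound, setting $\delta_n=\tilde\delta_n/q$. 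The trade-off is exactly the one you flag in your closing parenthesis: the paper's argument needs only $\E[\xi^2]<\infty$, so it survives the weakening of \ref{a_2mom} to (A4') discussed in Remark~\ref{rem_ass}, while your argument genuinely uses $\E|\xi|^{2+\delta}<\infty$; in exchange, yours is shorter and gives an explicit $\delta_n$ and an explicit rate rather than a non-constructive sequence.
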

	
	\begin{proof}
	Since
	$1 = \E [ \tilde{X}_{11}^2 ] = 2 \int_0^\infty x \PR (|\tilde{X}_{11} | > x) dx$,
	there exists a positive sequence $(\tilde{\delta}_n)_{n\in\N}$ converging to zero with the property
	\begin{align*}
		\lim\limits_{n\to\infty} n \PR ( | \tilde{X}_{11} | > \sqrt{n} \tilde{\delta}_n ) = 0.
	\end{align*}
	Moreover, we observe for any $y>0$
	\begin{align*}
		\{ | X_{k1} | > y \} 
		\subset \left\{ \sum\limits_{l \in \mathcal{I}(k) } | U_{kl} \tilde{X}_{l1} | > y \right\} \subset \left\{ \sum\limits_{l \in \mathcal{I}(k) } | \tilde{X}_{l1} | > y \right\} 
		\subset \left\{ \max\limits_{l \in \mathcal{I}(k) } |  \tilde{X}_{l1} | > \frac{y}{q} \right\},
	\end{align*}		
	where, by assumption \ref{a_sparse_U},
		$q := \sup_{n\in\N} \max_{1 \leq k \leq p}  | \mathcal{I}(k) | < \infty$. 
	Using
	\begin{align*}
		\max\limits_{1 \leq k \leq p} \PR ( | X_{k1} | > y ) 
		\leq \max\limits_{1 \leq k \leq p} \PR \lb \max\limits_{l \in \mathcal{I}(k) } |  \tilde{X}_{l1} | > \frac{y}{q} \rb
		\leq q \PR \lb | \tilde{X}_{11} | > \frac{y}{q} \rb ,
	\end{align*}		
	we conclude that the assertion of Lemma \ref{lem_pr_delta} holds true if we set $\delta_n = \tilde{\delta}_n / q$ for $n\in\N$. 
	\end{proof}

	The following two lemmas are generalizations of Lemma 3.1 and Lemma 3.5 in \cite{fuchs:joffe:teugels:2001}, respectively. 
	\begin{lemma} \label{lem_laplace_bound}
	For every $s>0$, the Laplace transforms $\varphi_k(s)$ are uniformly bounded by one, that is,
\begin{align*}
			\sup_{n \in \N}\max_{1\le k\le p} \varphi_k(s) < 1. 
\end{align*}	
	\end{lemma}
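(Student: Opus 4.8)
The plan is to prove the equivalent statement that $1-\varphi_k(s)$ is bounded away from $0$ uniformly in $n\in\N$ and $1\le k\le p$. The starting point is the elementary inequality $1-e^{-u}\ge u e^{-u}$ for $u\ge 0$ (which is just $e^u\ge 1+u$). Applying it with $u=sX_{k1}^2$ and taking expectations gives, for every $s>0$ and every truncation level $M>0$,
\[
1-\varphi_k(s)=\E\big[1-e^{-sX_{k1}^2}\big]\ge s\,\E\big[X_{k1}^2 e^{-sX_{k1}^2}\big]\ge s\,e^{-sM}\,\E\big[X_{k1}^2\,\1\{X_{k1}^2\le M\}\big].
\]
So it suffices to exhibit one fixed $M$ (depending only on $\xi$ and on the sparsity constant) for which $\E[X_{k1}^2\,\1\{X_{k1}^2\le M\}]$ is bounded below by a positive constant, uniformly in $k$ and $n$.

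For this I would use two facts. First, $\E[X_{k1}^2]=T_{kk}=1$ for all $k$: indeed $X_{k1}=\sum_{l\in\mathcal{I}(k)}U_{kl}\tilde{X}_{l1}$ with the $\tilde{X}_{l1}$ independent, centred, of unit variance, and $\sum_{l}U_{kl}^2=T_{kk}=1$ (hence in particular $|U_{kl}|\le 1$). Second, a uniform $(2+\delta)$-moment bound: writing $q:=\sup_{n\in\N}\max_{1\le k\le p}|\mathcal{I}(k)|<\infty$ by assumption \ref{a_sparse_U}, Minkowski's inequality gives $\|X_{k1}\|_{2+\delta}\le\sum_{l\in\mathcal{I}(k)}|U_{kl}|\,\|\xi\|_{2+\delta}\le q\,\|\xi\|_{2+\delta}$, so $\E[|X_{k1}|^{2+\delta}]\le C:=q^{2+\delta}\,\E[|\xi|^{2+\delta}]<\infty$ uniformly. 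Combining these with the pointwise estimate $X_{k1}^2\,\1\{X_{k1}^2>M\}\le M^{-\delta/2}|X_{k1}|^{2+\delta}$ yields
\[
\E\big[X_{k1}^2\,\1\{X_{k1}^2\le M\}\big]=1-\E\big[X_{k1}^2\,\1\{X_{k1}^2>M\}\big]\ge 1-C\,M^{-\delta/2},
\]
and choosing $M$ so large that $C\,M^{-\delta/2}\le 1/2$ gives $\E[X_{k1}^2\,\1\{X_{k1}^2\le M\}]\ge 1/2$ for all $k,n$. Plugging this back into the first display produces $\sup_{n\in\N}\max_{1\le k\le p}\varphi_k(s)\le 1-\tfrac{s}{2}e^{-sM}<1$, which is the claim.

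There is no genuine obstacle; the pointwise bound $\varphi_k(s)<1$ is immediate from non-degeneracy of $X_{k1}$, and the only point requiring attention is that every estimate must be uniform over the triangular array $(X_{k1}^{(n)})_{1\le k\le p_n,\,n\in\N}$ — which is precisely where the sparsity condition \ref{a_sparse_U}, and through it the uniform $(2+\delta)$-moment bound, enters. If one wanted to avoid invoking the $(2+\delta)$-moment, an alternative is an anti-concentration argument: pick $l_0\in\mathcal{I}(k)$ with $U_{kl_0}^2\ge 1/q$, write $X_{k1}=U_{kl_0}\tilde{X}_{l_01}+W_k$ with $W_k$ independent of $\tilde{X}_{l_01}$, bound $\P(|X_{k1}|\le\eta)\le Q(\xi,\eta\sqrt{q})$ by the Lévy concentration function, and use that $Q(\xi,\lambda)\to\sup_x\P(\xi=x)<1$ as $\lambda\downarrow 0$ (by non-degeneracy of $\xi$) to get a uniform lower bound on $\P(|X_{k1}|>\eta)$ and hence on $1-\varphi_k(s)$. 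I would present the truncation argument as the main one, since it is shorter and uses only tools already in play.
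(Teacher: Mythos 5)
Your proof is correct, but it takes a more direct, quantitative route than the paper. The paper first isolates a uniform anti-concentration statement, namely that $\sup_{n}\max_{k}\P(X_{k1}^2\le\varepsilon)<1$ for some $\varepsilon>0$, proves it by contradiction (a subsequence $X_{k(n)1}^2\cip 0$ would be uniformly integrable thanks to the uniform $(2+\delta)$-moment bound, forcing $\E[X_{k(n)1}^2]\to 0$ and contradicting $\E[X_{k1}^2]=T_{kk}=1$), and then splits $\varphi_k(s)$ at the level $\varepsilon$ to get $\varphi_k(s)\le(1-e^{-s\varepsilon})\P(X_{k1}^2\le\varepsilon)+e^{-s\varepsilon}$. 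You instead lower-bound $1-\varphi_k(s)$ directly via $1-e^{-u}\ge u e^{-u}$ and a truncated second moment, replacing the contradiction/uniform-integrability step by the explicit tail estimate $\E[X_{k1}^2\1\{X_{k1}^2>M\}]\le C M^{-\delta/2}$. The two arguments rest on exactly the same ingredients ($\E[X_{k1}^2]=1$ and a uniform $(2+\delta)$th moment over the triangular array), so they are morally equivalent; what your version buys is an explicit bound $\varphi_k(s)\le 1-\tfrac{s}{2}e^{-sM}$ with computable constants, and it also makes explicit, via Minkowski and $|U_{kl}|\le 1$, that the uniform moment bound $\E|X_{k1}|^{2+\delta}\le q^{2+\delta}\E|\xi|^{2+\delta}$ really uses the sparsity assumption \ref{a_sparse_U} together with \ref{a_2mom} — a point the paper's proof passes over by citing only \ref{a_2mom}. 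The alternative Lévy concentration-function argument you sketch is not needed, but it is a legitimate fallback under weaker moment assumptions.
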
 	
	\begin{proof}
	Assume that there exists some $\varepsilon > 0$ with the property 
	\begin{align} \label{aim_eps}
	    \sup_{n\in\N} \max_{1 \leq k \leq p} \PR ( X_{k1}^2 \leq \varepsilon ) <1. 
	\end{align}
	Then, it follows for every $s>0$
	\begin{align*}
	    \varphi_k (s) = & \E \left[ \exp(-s X_{k1}^2 ) I \{ X_{k1}^2 \leq \varepsilon \} \right] 
	    + \E \left[ \exp(-s X_{k1}^2 ) I \{ X_{k1}^2 > \varepsilon \} \right] \\
	    \leq & \PR ( X_{k1}^2 \leq \varepsilon ) + \exp(-s\varepsilon) \PR ( X_{k1}^2 > \varepsilon ) 
	    = \lb 1 - \exp(-s \varepsilon) \rb  \PR \lb X_{k1}^2 \leq \varepsilon \rb  + \exp(-s\varepsilon)  ,
	\end{align*}
	which implies the assertion of Lemma \ref{lem_laplace_bound}. 
	Thus, it is left to show that \eqref{aim_eps} holds true, which will be proven by contradiction. Assume that \eqref{aim_eps} does not hold. 
	Then, one can obtain a sequence $(X_{k(n)1}^2)_{n\in\N}$ which converges in probability to zero as $n\to\infty$.
	Using $\E | X_{k1} |^{2+\delta} < \infty$ uniformly over $1 \leq k \leq p, ~ n\in\N$ by assumption \ref{a_2mom}, we note that
	\begin{align*}
	    \sup_{n\in\N} \E \left[ X_{k(n)1}^2 \1\{ X_{k(n)1}^2 > c \} \right]
	    \leq \frac{1}{c^\delta} \sup\limits_{n\in\N} \E \left[ X_{k(n)1}^{2 + \delta} \right] 
	    \to 0, ~ c \to\infty, 
	\end{align*}
	which shows that the sequence $(X_{k(n)1}^2)_{n\in\N}$ is uniformly integrable. Consequently, 
		\begin{align*}
			\lim\limits_{n\to\infty} \E [X_{k(n)1}^2 ] =0 ,
		\end{align*}
		which contradicts the fact that  $\E [ X_{k1}^2]=1$ for every $1 \leq k \leq p, ~ n\in\N$. 
	
	
	\end{proof}
	
	\begin{lemma} \label{lem_deriv_laplace}
	For all $s>0$, the Laplace transform $\varphi_k(s)$ and the absolute value of its derivative $\varphi_k'(s)$ are uniformly bounded away from zero, that is,
	 \begin{align*}
	 \inf\limits_{n\in\N} \min_{1 \leq k \leq p}   \varphi_k(s)  > 0
		\quad \textnormal{ and } \quad
		\inf\limits_{n\in\N} \min_{1 \leq k \leq p}  \lb  - \varphi_k'(s) \rb > 0.
	\end{align*}
	\end{lemma}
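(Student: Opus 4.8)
The plan is to derive both lower bounds from two uniform inputs that are already at hand: the exact identity $\E[X_{k1}^2]=T_{kk}=1$ for every $k$ and $n$ (which holds because $\diag(\T)=\bfI_p$ and $\E[\xi^2]=1$, so that $\E[X_{k1}^2]=\sum_{l}U_{kl}^2=(\mathbf{U}\mathbf{U}^\top)_{kk}=T_{kk}$), and the uniform anti-concentration estimate \eqref{aim_eps} established in the proof of Lemma~\ref{lem_laplace_bound}, which yields constants $\varepsilon>0$ and $\eta>0$, independent of $k$ and $n$, with $\PR(X_{k1}^2>\varepsilon)\ge\eta$ for all $1\le k\le p$, $n\in\N$.

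For the first assertion I would fix $s>0$ and restrict the expectation defining $\varphi_k(s)$ to the event $\{X_{k1}^2\le 2\}$; by Markov's inequality together with $\E[X_{k1}^2]=1$ this event has probability at least $\tfrac12$ uniformly in $k$ and $n$, whence $\varphi_k(s)\ge\tfrac12\e^{-2s}>0$. For the second assertion I would first justify differentiating $\varphi_k$ under the expectation sign, which is legitimate for fixed $s>0$ since $u\mapsto u\e^{-su}$ is bounded on $[0,\infty)$; this gives $-\varphi_k'(s)=\E[X_{k1}^2\e^{-sX_{k1}^2}]\ge 0$. Then I would bound this expectation below by its restriction to the event $\{\varepsilon<X_{k1}^2\le b\}$ with $b:=2/\eta$: Markov's inequality yields $\PR(X_{k1}^2>b)\le\eta/2$, so combining with $\PR(X_{k1}^2>\varepsilon)\ge\eta$ this event has probability at least $\eta/2$ uniformly, and on it the integrand is at least $\varepsilon\e^{-sb}$; hence $-\varphi_k'(s)\ge\tfrac{\varepsilon\eta}{2}\,\e^{-2s/\eta}>0$, uniformly in $k$ and $n$.

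The computations are entirely elementary, and I do not expect a genuine obstacle; the only point requiring attention is keeping every constant ($\varepsilon$, $\eta$, and the Markov thresholds) independent of both $k$ and $n$, i.e.\ treating the triangular array $(X_{k1}^{(n)})$ uniformly. This is already guaranteed by the two inputs above, since both the moment identity $\E[X_{k1}^2]=1$ and \eqref{aim_eps} hold uniformly over $k$ and $n$; in that sense the hard part was absorbed into the preceding lemma.
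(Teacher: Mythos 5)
Your proof is correct, but it takes a partly different route from the paper's. For the lower bound on $\varphi_k(s)$ the paper argues through the sparsity assumption \ref{a_sparse_U}: writing $X_{k1}=\sum_{l\in\mathcal{I}(k)}U_{kl}\tilde X_{l1}$ and using $|U_{kl}|\le 1$, it bounds $\varphi_k(s)$ below by $\E\big[\exp\big(-s(\sum_{l=1}^q|\tilde X_{l1}|)^2\big)\big]$ with $q=\sup_n\max_k|\mathcal{I}(k)|$, a positive constant independent of $k$ and $n$; your bound $\varphi_k(s)\ge\tfrac12 e^{-2s}$ uses only $\E[X_{k1}^2]=T_{kk}=1$ plus Markov, so it is more elementary and does not invoke \ref{a_sparse_U} at all. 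For the derivative, both arguments hinge on the uniform anti-concentration \eqref{aim_eps} from the proof of Lemma \ref{lem_laplace_bound}, but they treat the upper tail of $X_{k1}^2$ differently: the paper passes in one step to $-\varphi_k'(s)\ge\varepsilon\,\varphi_k(s)\,\PR(X_{k1}^2>\varepsilon)$, which is terse, since restricting to $\{X_{k1}^2>\varepsilon\}$ only gives $\varepsilon\,\E[e^{-sX_{k1}^2}\1\{X_{k1}^2>\varepsilon\}]$, and comparing that with $\varepsilon\,\varphi_k(s)\PR(X_{k1}^2>\varepsilon)$ needs an extra argument (the naive association inequality between the decreasing factor $e^{-sX_{k1}^2}$ and the increasing indicator goes in the opposite direction); your explicit second truncation at $b=2/\eta$, justified by Markov and $\E[X_{k1}^2]=1$, supplies precisely the missing control of large values of $X_{k1}^2$ and yields the clean uniform bound $\tfrac{\varepsilon\eta}{2}e^{-2s/\eta}$. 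In short, your version buys simplicity and slightly weaker hypotheses (only $\diag(\T)=\bfI_p$, unit variance of $\xi$, and \eqref{aim_eps}), whereas the paper's first bound is tied to the structural assumption \ref{a_sparse_U}; uniformity over the triangular array is handled correctly in your argument, since $\varepsilon$, $\eta$ and the Markov thresholds do not depend on $k$ or $n$, and the interchange of differentiation and expectation is standard as you indicate.
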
 
	\begin{proof}
	We start by proving the first statement. 
	Note that 
	\begin{align*}
		\varphi_k (s) \geq & \E \left[ \exp \lb - s \lb \sum\limits_{l \in \mathcal{I}(k)} | U_{kl} \tilde{X}_{l1} | \rb^2 \rb \right]
		 \geq \E \left[ \exp \lb - s \lb \sum\limits_{l \in \mathcal{I}(k)} | \tilde{X}_{l1} | \rb^2 \rb \right] \\
		  \geq & \E \left[ \exp \lb - s \lb \sum\limits_{l =1}^q | \tilde{X}_{l1} | \rb^2 \rb \right] > 0 \textnormal{ uniformly in } k\in\N,
	\end{align*}
		where 
	\begin{align*}
		q = \sup_{n\in\N} \max_{1 \leq k \leq p}  | \mathcal{I}(k) | < \infty.
	\end{align*}

	Next, we study the derivative of the Laplace transform. Choose $\varepsilon>0$ such that \eqref{aim_eps} from the proof of Lemma \ref{lem_laplace_bound} holds true. Then, we obtain the estimate
 	\begin{align*}
 		 - \varphi_k' ( s ) 
 		& =   \E \left[ X_{k1}^2 \exp \lb - s X_{k1}^2 \rb \right]\\ 
		&  =  \E \left[ X_{k1}^2 \exp \lb - s X_{k1}^2 \rb \1 \{ X_{k1}^2 \leq \varepsilon \} \right] 
		 + \E \left[ X_{k1}^2 \exp \lb - s X_{k1}^2 \rb \1 \{ X_{k1}^2 > \varepsilon \} \right] \\
		 &\geq \varepsilon \varphi_k (s) \PR (  X_{k1}^2 > \varepsilon ) 
		> 0,
 	\end{align*}
 	uniformly over $1 \leq k \leq p, ~ n\in\N$, 
	where we used \eqref{aim_eps} and assumption \ref{a_sparse_U}.

	\end{proof}

	\begin{lemma} \label{lem_cn}
		For all $\varepsilon>0$, the quantity
		\begin{align*}
			C_n(\varepsilon) = \max\limits_{1 \leq k \leq p} n  \int\limits_\varepsilon^\infty 
			s \varphi_k''(s) \varphi_k^{n-1} (s) ds 
		\end{align*}
		converges to zero as $n$ tends to infinity. 
	\end{lemma}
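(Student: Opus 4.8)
The plan is to control the tail integral $C_n(\varepsilon)$ by factoring out a suitable power of the Laplace transform evaluated at $\varepsilon$, exploiting that on $[\varepsilon,\infty)$ the function $\varphi_k$ is bounded away from $1$ uniformly in $k$ and $n$. First I would use monotonicity of $\varphi_k$ on $[\varepsilon,\infty)$ to write, for $s\ge\varepsilon$,
\[
\varphi_k^{n-1}(s)\le \varphi_k^{n-2}(\varepsilon)\,\varphi_k(s),
\]
which peels off $n-2$ factors that will supply the decay. This gives the bound
\[
C_n(\varepsilon)\le \max_{1\le k\le p}\Big( n\,\varphi_k^{n-2}(\varepsilon)\int_\varepsilon^\infty s\,\varphi_k''(s)\,\varphi_k(s)\,ds\Big)
\le \max_{1\le k\le p}\Big( n\,\varphi_k^{n-2}(\varepsilon)\int_0^\infty s\,\varphi_k''(s)\,ds\Big),
\]
where in the last step I used $\varphi_k(s)\le 1$ and extended the integration domain (both integrand factors are nonnegative).

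Next I would show the remaining integral $\int_0^\infty s\,\varphi_k''(s)\,ds$ is bounded uniformly in $k$ and $n$. Recalling $\varphi_k''(s)=\E[X_{k1}^4\e^{-sX_{k1}^2}]$ and using Fubini,
\[
\int_0^\infty s\,\varphi_k''(s)\,ds=\E\Big[X_{k1}^4\int_0^\infty s\,\e^{-sX_{k1}^2}\,ds\Big]=\E\Big[X_{k1}^4\cdot\frac{1}{X_{k1}^4}\Big]=1,
\]
so this factor is in fact identically $1$ (one should note $\P(X_{k1}=0)$ plays no role since that event contributes $0$ to $\E[Y_{k1}^4]$ and the corresponding Laplace manipulations; alternatively one argues on $\{X_{k1}\neq 0\}$). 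Therefore
\[
C_n(\varepsilon)\le n\max_{1\le k\le p}\varphi_k^{n-2}(\varepsilon).
\]

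It then remains to prove $n\max_{1\le k\le p}\varphi_k^{n-2}(\varepsilon)\to 0$, which is exactly the type of estimate established in the proofs of Proposition~\ref{thm_first_mom} and Proposition~\ref{thm_4mom}: by Lemma~\ref{lem_laplace_bound} there is a constant $c=c(\varepsilon)<1$ with $\sup_{n}\max_{1\le k\le p}\varphi_k(\varepsilon)\le c$, hence $n\max_k\varphi_k^{n-2}(\varepsilon)\le n\,c^{\,n-2}\to 0$. Combining the three displays yields $C_n(\varepsilon)\to 0$, completing the proof. The only mild subtlety — the main thing to be careful about — is the interchange of expectation and the $s$-integral when $X_{k1}$ can vanish with positive probability; this is handled by restricting to the event $\{X_{k1}\neq 0\}$, since $X_{k1}=0$ forces $Y_{k1}=0$ in all the quantities of interest and contributes nothing to the Fubini computation (the integrand $s\,X_{k1}^4\e^{-sX_{k1}^2}$ vanishes there anyway). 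Everything else is monotonicity of $\varphi_k$ and the uniform gap $\varphi_k(\varepsilon)<1$ already provided by Lemma~\ref{lem_laplace_bound}.
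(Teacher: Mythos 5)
Your proof is correct and takes essentially the same approach as the paper: both arguments peel off the power of $\varphi_k(\varepsilon)$ via monotonicity of the Laplace transform (you keep $n\varphi_k^{n-2}(\varepsilon)$, the paper $n\varphi_k^{n}(\varepsilon)/\varphi_k(\varepsilon)$), kill it with the uniform gap $\sup_{n}\max_{1\le k\le p}\varphi_k(\varepsilon)<1$ from Lemma~\ref{lem_laplace_bound}, and then bound the remaining integral uniformly in $k$ and $n$. The only cosmetic difference is that you evaluate $\int_0^\infty s\,\varphi_k''(s)\,ds$ by Tonelli (it equals $\P(X_{k1}\neq 0)\le 1$ rather than exactly $1$, which is harmless since only an upper bound is needed), whereas the paper bounds $\int_\varepsilon^\infty s\,\varphi_k''(s)\,ds$ by integration by parts; your variant even avoids invoking Lemma~\ref{lem_deriv_laplace}.
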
 
	\begin{proof}
	 By using Lemma \ref{lem_deriv_laplace} and Lemma \ref{lem_laplace_bound} and considering the estimate 
	 \begin{align*}
	 	C_n(\varepsilon) 
	 	\leq \max_{1\le k\le p} \lb \frac { n \varphi_k^{n} (\varepsilon)  }{ \varphi_k(\varepsilon) } \rb 
	 	\max_{1\le k\le p} \lb \int\limits_\varepsilon^\infty s \varphi_k''(s) ds  \rb 
	 	= o(1) \max_{1\le k\le p} \lb \int\limits_\varepsilon^\infty s \varphi_k''(s) ds  \rb  ,
	 \end{align*}
	  it is sufficient to prove that 
	 \begin{align*}
	 	\max_{1\le k\le p} \int\limits_\varepsilon^\infty s \varphi_k''(s) ds < \infty.
	 \end{align*}
	 We obtain via partial integration and dominated convergence
	 \begin{align*}
	 	\int\limits_\varepsilon^\infty s \varphi_k''(s) ds 
	 	= - \varepsilon \varphi_k'(\varepsilon) + \varphi_k(\varepsilon) 
	 	\lesssim 1,
	 \end{align*}
	 where the last inequality holds uniformly over $k\in\N$. 
	\end{proof}
	
	\begin{lemma} \label{lem_int}
	Assume that $\E | \xi|^{2+\delta} <\infty$ for some $\delta >0$. 
	For all $\varepsilon>0$, the integral
	\begin{align*}
	 	D_n ( \varepsilon) = \max\limits_{1 \leq k \leq p} \int\limits_0^\varepsilon n \E \left[   X_{k1}   \exp\lb - s  X_{k1}^2 \rb \1\{ | X_{k1}| \leq \sqrt{n} \delta_n \} \right] \lb \varphi_{k}(s) \rb^{n-1} s^{-\frac{1}{2}}  ds
	\end{align*} 
	converges to zero, as $n$ tends to infinity.
	\end{lemma}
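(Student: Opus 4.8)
The plan is to exploit the cancellation coming from $\E[X_{k1}]=0$ — exactly as in the proof of Lemma~\ref{lem_mixed_mom_i.i.d.} — combined with uniform-in-$k$ control of the $(2+\delta)$-th moment of $X_{k1}$ and of the Laplace transform $\varphi_k$ near the origin. Throughout one may assume $0<\delta<1$, since $\E|\xi|^{2+\delta}<\infty$ implies $\E|\xi|^{2+\delta'}<\infty$ for $\delta'=\min(\delta,1/2)$. First I would record two uniform facts. Since $\diag(\T)=\bfI$ forces $|U_{kl}|\le1$ and $q:=\sup_n\max_k|\mathcal I(k)|<\infty$ by~\ref{a_sparse_U}, one has $|X_{k1}|\le\sum_{l\in\mathcal I(k)}|\tilde X_{l1}|$, whence by~\ref{a_2mom}
\[
C_\delta:=\sup_{n\in\N}\max_{1\le k\le p}\E|X_{k1}|^{2+\delta}\le q^{2+\delta}\,\E|\xi|^{2+\delta}<\infty,\qquad \sup_{n}\max_k\PR(|X_{k1}|>t)\le C_\delta t^{-2-\delta}.
\]
Using $1-e^{-u}\ge u/2$ on $[0,1]$ together with the tail bound, $1-\varphi_k(s)\ge\frac s2\,\E[X_{k1}^2\1\{sX_{k1}^2\le1\}]\ge\frac s2(1-C_\delta s^{\delta/2})$, so there is $s_0\in(0,\varepsilon]$, independent of $n,k$, with $\varphi_k(s)\le1-s/4\le e^{-s/4}$ for $0<s\le s_0$. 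Enlarging the sequence $(\delta_n)$ of Lemma~\ref{lem_pr_delta} if necessary — which, by the tail bound, only strengthens the conclusion of that lemma since then $n\max_k\PR(|X_{k1}|>\sqrt n\,\delta_n)\le C_\delta n^{-\delta/4}\to0$ — I would assume in addition $\delta_n\ge n^{-\delta/(4(2+\delta))}$.

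Next, for $0<s\le s_0$, I would write $xe^{-sx^2}=x-sx^3\int_0^1e^{-s\theta x^2}d\theta$, so that $\E[X_{k1}]=0$ gives, with $\Theta$ uniform on $[0,1]$ and independent,
\[
\E\big[X_{k1}e^{-sX_{k1}^2}\1\{|X_{k1}|\le\sqrt n\delta_n\}\big]=\underbrace{-s\,\E\big[X_{k1}^3e^{-s\Theta X_{k1}^2}\big]}_{=:A_k(s)}-\underbrace{\E\big[X_{k1}e^{-sX_{k1}^2}\1\{|X_{k1}|>\sqrt n\delta_n\}\big]}_{=:B_k(s)}.
\]
Maximizing $x\mapsto x^{1-\delta}e^{-s\theta x^2}$ over $x\ge0$ yields $|X_{k1}|^3e^{-s\Theta X_{k1}^2}\le c_\delta|X_{k1}|^{2+\delta}(s\Theta)^{-(1-\delta)/2}$, hence, using independence of $X_{k1}$ and $\Theta$ and $\E[\Theta^{-(1-\delta)/2}]=\tfrac2{1+\delta}<\infty$,
\[
|A_k(s)|\le c_\delta\,C_\delta\,\tfrac2{1+\delta}\,s^{(1+\delta)/2}\lesssim s^{(1+\delta)/2}
\]
uniformly in $n,k$; whereas $|B_k(s)|\le\E[|X_{k1}|\1\{|X_{k1}|>\sqrt n\delta_n\}]\lesssim(\sqrt n\delta_n)^{-(1+\delta)}=n^{-(1+\delta)/2}\delta_n^{-(1+\delta)}$ uniformly in $n,k$ and independently of $s$.

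Finally I would assemble the estimate. On $[0,s_0]$, using $\varphi_k^{n-1}(s)\le e^{-(n-1)s/4}$ and $\int_0^\infty s^{a}e^{-\lambda s}ds=\Gamma(a+1)\lambda^{-a-1}$ for $a>-1$,
\[
n\int_0^{s_0}|A_k(s)|\varphi_k^{n-1}(s)s^{-1/2}ds\lesssim n\int_0^\infty s^{\delta/2}e^{-(n-1)s/4}ds\asymp n\cdot n^{-1-\delta/2}=n^{-\delta/2},
\]
\[
n\int_0^{s_0}|B_k(s)|\varphi_k^{n-1}(s)s^{-1/2}ds\lesssim n\cdot n^{-(1+\delta)/2}\delta_n^{-(1+\delta)}\int_0^\infty s^{-1/2}e^{-(n-1)s/4}ds\asymp n^{-\delta/2}\delta_n^{-(1+\delta)},
\]
and the right-hand side of the second line tends to $0$ by the choice $\delta_n\ge n^{-\delta/(4(2+\delta))}$, since $-\tfrac\delta2+\tfrac{(1+\delta)\delta}{4(2+\delta)}=\tfrac{-(3+\delta)\delta}{4(2+\delta)}<0$. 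If $\varepsilon>s_0$, the leftover $n\int_{s_0}^\varepsilon(\cdots)ds$ is bounded by $n\big(\sup_{n,k}\varphi_k(s_0)\big)^{n-1}\,C_\delta^{1/(2+\delta)}\,\varepsilon\,s_0^{-1/2}\to0$, because $\sup_{n,k}\varphi_k(s_0)<1$ by Lemma~\ref{lem_laplace_bound} and $\sup_n\max_k\E|X_{k1}|\le C_\delta^{1/(2+\delta)}<\infty$. Since every bound above is uniform in $k$, we conclude $D_n(\varepsilon)\to0$. The main obstacle is the truncation-error term $B_k(s)$: truncation destroys the exact mean-zero property that drives the cancellation, so one has to quantify it through the uniform $(2+\delta)$-th moment and, crucially, arrange that $\delta_n$ decays slowly enough — which is why the (harmless) enlargement of $\delta_n$ is built into the argument from the start.
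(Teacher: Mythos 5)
Your argument is correct, and it takes a genuinely different route from the paper's proof. The paper keeps the truncation inside the expectation throughout: it Taylor-expands $\exp(-sX_{k1}^2)$ to get $|\E[X_{k1}e^{-sX_{k1}^2}\1\{|X_{k1}|\le\sqrt n\delta_n\}]|\le|\E[X_{k1}\1\{|X_{k1}|\le\sqrt n\delta_n\}]|+s\,\E|X_{k1}^3\1\{|X_{k1}|\le\sqrt n\delta_n\}|$, splits $D_n\le D_{n,1}+D_{n,2}$, treats $D_{n,1}$ by the mean-zero flip plus H\"older and Lemma \ref{lem_pr_delta} together with the dominating bound \eqref{est5}, and treats $D_{n,2}$ by $\E|X_{k1}^3\1|\lesssim n^{(1-\delta)/2}$ combined with \eqref{ineq_nsphi} (Fuchs--Joffe--Teugels and Lemma \ref{lem_deriv_laplace}) and dominated convergence. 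You instead strip the truncation using $\E[X_{k1}]=0$ and the Gin\'e--G\"otze--Mason identity $\E[X_{k1}e^{-sX_{k1}^2}]=-s\,\E[X_{k1}^3e^{-s\Theta X_{k1}^2}]$ (the same identity the paper uses for Lemma \ref{lem_mixed_mom_i.i.d.}), which gives the $n$-free bound $|A_k(s)|\lesssim s^{(1+\delta)/2}$; you then replace the paper's dominated-convergence machinery by an explicit uniform exponential bound $\varphi_k(s)\le e^{-s/4}$ near the origin (derived from $\E[X_{k1}^2]=T_{kk}=1$ and the uniform $(2+\delta)$-moment, itself a consequence of \ref{a_sparse_U} and $|U_{kl}|\le1$) and direct Gamma-integral computations, handling $[s_0,\varepsilon]$ via Lemma \ref{lem_laplace_bound}. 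What your approach buys is an elementary, fully quantitative bound (rate $n^{-\delta/2}$ up to the $\delta_n$ factor) that avoids \eqref{est_varphi}, \eqref{ineq_nsphi} and Lemma \ref{lem_deriv_laplace}; what it costs is the treatment of the truncation-error term $B_k(s)$ by a plain Markov bound, which forces your additional requirement $\delta_n\ge n^{-\delta/(4(2+\delta))}$, so strictly speaking you prove the lemma only for a suitably enlarged truncation sequence. This is harmless here, since Lemma \ref{lem_pr_delta} only asserts existence of some admissible $(\delta_n)$, your enlarged sequence remains admissible, and Proposition \ref{thm_first_mom} uses one fixed sequence consistently; but note that the modification is avoidable: bounding $\E[|X_{k1}|\1\{|X_{k1}|>\sqrt n\delta_n\}]$ by H\"older as in the paper's estimate \eqref{est4} gives $o(n^{-(1+\delta)/(2+\delta)})$, and then $n\cdot o(n^{-(1+\delta)/(2+\delta)})\cdot O(n^{-1/2})=o(1)$ because $(1+\delta)/(2+\delta)>1/2$, so your argument would then cover an arbitrary sequence satisfying Lemma \ref{lem_pr_delta}, exactly as the paper's does.
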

	\begin{proof}
	Performing a Taylor expansion for $\exp(-s)$, $s>0$, we get
	\begin{align} \label{taylor}
		\exp(-s) = 1 - ( 1 - \exp( - \zeta(s) )  s,
	\end{align}
	where $\zeta(s) \in (0,s)$, which implies
	\begin{align*}
		&\left| \E \left[   X_{k1}   \exp\lb - s  X_{k1}^2 \rb \1\{ | X_{k1}| \leq \sqrt{n} \delta_n \} \right] \right| \\
		&=   \left| \E \left[ X_{k1} \left\{  1 - \lb 1 - \exp( - \zeta(sX_{k1}^2 ) ) \rb s X_{k1}^2 \right\} \1\{ | X_{k1}| \leq \sqrt{n} \delta_n \} \right] \right| \\
		&\leq  \left| \E \left[ X_{k1}  \1\{ | X_{k1}| \leq \sqrt{n} \delta_n \} \right] \right|
		+ s \E \left| X_{k1}^3  \1\{ | X_{k1}| \leq \sqrt{n} \delta_n \} \right|.
	\end{align*}
	Hence, we get
		$| D_n(\varepsilon) | 
		\leq  D_{n,1} (\varepsilon) + D_{n,2} ( \varepsilon)$,
	where
	\begin{align*}
	D_{n,1}(\varepsilon) = & \max\limits_{1 \leq k \leq p} \int\limits_0^\varepsilon n \left| \E \left[   X_{k1}    \1\{ | X_{k1}| \leq \sqrt{n} \delta_n \} \right] \right| \lb \varphi_{k}(s) \rb^{n-1} s^{-\frac{1}{2}}  ds,  \\
	D_{n,2} ( \varepsilon) = &  \max\limits_{1 \leq k \leq p} \int\limits_0^\varepsilon n s \E \left|   X_{k1}^3   \1\{ | X_{k1}| \leq \sqrt{n} \delta_n \} \right| \lb \varphi_{k}(s) \rb^{n-1} s^{-\frac{1}{2}}  ds.
	\end{align*}
	In the following, we will show that $D_{n,1}(\varepsilon) = o(1)$ and $D_{n,2} (\varepsilon) = o(1)$. 
	Using 
	\begin{align*}
		 \E \left[   X_{k1}    \1\{ | X_{k1}| \leq \sqrt{n} \delta_n \} \right] 
		= - \E \left[   X_{k1}    \1\{ | X_{k1}| > \sqrt{n} \delta_n \} \right],
	\end{align*}
	we find the following estimate using \Holder~inequality and Lemma \ref{lem_pr_delta}
	\begin{align*}
		\left| \E \left[   X_{k1}    \1\{ | X_{k1}| \leq \sqrt{n} \delta_n \} \right]  \right| 
		&\leq  \lb \E | X_{k1} |^{2+\delta} \rb^{\frac{1}{2 + \delta}}
		\lb \PR \lb | X_{k1}| > \sqrt{n} \delta_n \rb \rb^{\frac{1 + \delta}{2 + \delta }}\\
		&\lesssim \lb \PR \lb | X_{k1}| > \sqrt{n} \delta_n \rb \rb^{\frac{1 + \delta}{2 + \delta }}
		=  o \lb n^{- \frac{1+\delta}{2 + \delta} } \rb,
	\end{align*}
	which implies for sufficiently large $n$ 
	\begin{align} \label{est4}
		n^{\frac{1 + \delta}{2 + \delta }} \left| \E \left[   X_{k1}    \1\{ | X_{k1}| \leq \sqrt{n} \delta_n \} \right]  \right|  \lesssim 1.
	\end{align}
	Moreover, we have using \eqref{est_varphi}
	\begin{align}
		n^{\frac{1}{2 + \delta}} \varphi_k^{n-1} (s) s^{- \frac{1}{2}}
		= & \lb n s \varphi_k^{n-1} (s) \rb^{\frac{1}{2 + \delta}} s^{- \frac{1}{2} - \frac{1}{2 + \delta} } \lb \varphi_k^{ n-1}(s) \rb^{1 - \frac{1}{2 + \delta}} 
		\lesssim s^{-\frac{1}{2} - \frac{1}{2 + \delta}}, \label{est5}
	\end{align}
	which is integrable on $(0,\varepsilon)$. 
	Combining \eqref{est4} and \eqref{est5}, we may apply the dominated convergence theorem for the integral in $D_{n,1}(\varepsilon)$ and, by Lemma \ref{lem_laplace_bound}, we
	conclude that $D_{n,1}(\varepsilon) = o(1)$. \\
	Investigating $D_{n,2}(\varepsilon)$ further, we see that
	\begin{align*}
		\E \left|   X_{k1}^3   \1\{ | X_{k1}| \leq \sqrt{n} \delta_n \} \right| 
		&\leq  \E \left| X_{k1} \right|^{2+\delta} \lb \sqrt{n} \delta_n \rb^{1 - \delta}
		\lesssim  n^{\frac{1}{2} ( 1 - \delta) }
	\end{align*}
	and
	\begin{align}
		\lb s n \rb^{1 + \frac{1}{2} (1 - \delta) } 
		\varphi_k^{n-1}(s) 
		&\lesssim  \lb s n \rb^{1 + \frac{1}{2} (1 - \delta) } 
		\varphi_k^{n}(s)
		\leq  \lb s n \rb^{1 + \frac{1}{2} (1 - \delta) }  e^{n s \varphi_k'(s) } \nonumber \\
		&\lesssim \lb s n (- \varphi_k'(s)) \rb^{1 + \frac{1}{2} (1 - \delta) }  e^{n s \varphi_k'(s) } 
	 \lesssim   1  , \label{ineq_nsphi}
	\end{align}
	where we used Lemma 3.3 in \cite{fuchs:joffe:teugels:2001} and Lemma \ref{lem_deriv_laplace}. This implies for the integrand in $D_{n,2}(\varepsilon)$
	\begin{align*}
		n s \E \left|   X_{k1}^3   \1\{ | X_{k1}| \leq \sqrt{n} \delta_n \} \right| \lb \varphi_{k}(s) \rb^{n-1} s^{-\frac{1}{2}} 
		&\lesssim  s^{-\frac{1}{2} -  \frac{1}{2} ( 1 - \delta) } 
		= s^{- 1 + \frac{1}{2}\delta} ,
	\end{align*}
	which is integrable on $(0,\varepsilon)$.
	Thus, by an application of the dominated convergence theorem and Lemma \ref{lem_laplace_bound}, it follows that $D_{n,2}(\varepsilon) =o(1)$. 
	\end{proof}

	\bibliography{libraryFeb2020}
\end{document}